\newcommand{\cmt}[1]{{\color[rgb]{0,0.3,1} #1}}
\newcommand{\cmtr}[1]{{\color[rgb]{1,0,0} #1}}
\newcommand{\cmtm}[1]{{\mbox{}\color{magenta} #1}}
\definecolor{sienna}{rgb}{0.53, 0.18, 0.09}
\newcommand{\cmts}[1]{{\mbox{}\color{sienna} #1}}
\renewcommand{\cmt}[1]{#1}
\renewcommand{\cmtr}[1]{#1}
\renewcommand{\cmtm}[1]{#1}
\renewcommand{\cmts}[1]{#1}
\numberwithin{equation}{section}
\newtheorem{theorem}{Theorem}[section]
\newtheorem{lemma}[theorem]{Lemma}
\newtheorem{proposition}[theorem]{Proposition}
\theoremstyle{remark}
\newtheorem{remark}[theorem]{Remark}
\newcommand{\bke}[1]{\left ( #1 \right )}
\newcommand{\bkt}[1]{\left [ #1 \right ]}
\newcommand{\bket}[1]{\left \{ #1 \right \}}
\newcommand{\norm}[1]{ \| #1  \|}
\newcommand{\bka}[1]{{\langle #1 \rangle}}
\newcommand{\abs}[1]{\left | #1 \right |}
\newcommand\al{\alpha}
\newcommand\be{\beta}
\newcommand\ga{\gamma}
\newcommand\de{\delta}
\newcommand\ve{\varepsilon}
\newcommand\e {\epsilon}
\renewcommand\th{\theta}
\newcommand\ka{\kappa}
\newcommand\la{\lambda}
\newcommand\si{\sigma}
\newcommand\ph{\varphi} %
\newcommand\om{\omega}
\newcommand\Ga{\Gamma}
\newcommand\De{\Delta}
\newcommand\Si{\Sigma}
\newcommand\Om{\Omega}
\newcommand{\R}{\mathbb{R}}
\newcommand{\N}{\mathbb{N}}
\renewcommand{\div}{\mathop{\rm div}\nolimits}
\newcommand{\supp} {\mathop{\mathrm{supp}}}
\newcommand{\calK}{{ \mathcal K  }}
\newcommand{\pd}{\partial}
\newcommand{\nb}{\nabla}
\newcommand{\td}{\tilde}
\newcommand{\wbar}[1]{\overline{\rule{0pt}{2.4mm} {#1}}}
\newcommand{\lec}{{\ \lesssim \ }}
\newcommand{\I}{\infty}
\newcommand{\bs}{\backslash}
\renewcommand{\[}{\begin{equation}}
\renewcommand{\]}{\end{equation}}
\newcommand{\EQ}[1]{\begin{equation}\begin{split} #1 \end{split}\end{equation}}
\begin{document}

\title{Green tensor of the Stokes system and asymptotics of
stationary Navier-Stokes flows in the half space}

\author{Kyungkeun Kang%
\thanks{Department of Mathematics, Yonsei University, Seoul 120-749,
South Korea. Email: \texttt{kkang@yonsei.ac.kr}}
\and
Hideyuki Miura%
\thanks{Department of Mathematical and Computing Sciences, Tokyo Institute of Technology, Tokyo 152-8551, Japan. Email: \texttt{miura@is.titech.ac.jp}}
\and
Tai-Peng Tsai%
\thanks{Department of Mathematics, University of British Columbia,
Vancouver, BC V6T 1Z2, Canada. Email:  \texttt{ttsai@math.ubc.ca}
}}
\date{\jobname .tex \quad \today}

\maketitle

\begin{abstract}

  We  derive refined estimates of the Green tensor
  of the stationary Stokes system in the half space. We then
  investigate the spatial asymptotics of stationary solutions of the
  incompressible Navier-Stokes equations in the half space. We also discuss the asymptotics of fast decaying flows
  in the whole space and exterior domains. In the Appendix
  we consider axisymmetric self-similar solutions.

  {\it Keywords}: Navier-Stokes equations; Stokes system; half space, exterior domain; Green tensor; Odqvist
  tensor; spatial asymptotics; asymptotic profile; asymptotic completeness; self-similar
  solutions.

{\it Mathematics Subject Classification (2010)}: 35Q30, 76D05, 35B40
\end{abstract}

\section{Introduction} \label{sec1}
We are concerned with the Stokes system
in the $n$-dimensional half space $\R^n_+$, $n \ge 2$,
\begin{equation}\label{eq1-1}
\tag{S}
\begin{cases}
-\Delta u+\nabla q=f+\nabla\cdot F, \qquad{\rm div}\,u=0\qquad
&\mbox{in }\,\,\R^n_+,
\\
\qquad\qquad\qquad\qquad u=0\qquad &\mbox{on }\,\, \pd  \R^n_+,
\end{cases}
\end{equation}
or of the Navier-Stokes equations
\begin{equation}\label{eq1-2}
\tag{NS}
\begin{cases}
-\Delta u+ (u\cdot\nabla )u+\nabla p=f+\nabla\cdot F, \qquad{\rm
div}\,u=0\qquad &\mbox{in }\,\,\R^n_+,\\
\qquad\qquad\qquad\qquad u=0\qquad &\mbox{on }\,\, \pd  \R^n_+.
\end{cases}
\end{equation}
Above $u=(u_i)_{i=1}^n :\R^n_+ \to \R^n$ is the velocity field, $p : \R^n_+ \to \R$ is the pressure, and
$(f+ \nabla\cdot F)_i = f_i + \pd_j F_{ji}$ is the given force. We denote
\[
\R^n_+=\bket{x=(x',x_n):\ x'=(x_1,\ldots,x_{n-1})\in \R^{n-1}, \
  x_n>0},
\]
with boundary $\Sigma=\pd \R^n_+= \bket{x=(x',x_n):\ x'\in \R^{n-1}, \
x_n=0}$. Denote
\[
x^* = (x',-x_n) \quad \text{if}\quad x=(x',x_n).
\]

The purpose of this paper is to study the asymptotic
behavior of the Navier-Stokes flows for small forces. To this
end, we also derive pointwise estimates of the Green tensor for the
Stokes system \eqref{eq1-1}.
Our linear results are valid for dimension $n \ge 2$, while our nonlinear results are mostly for $n \ge 3$. 

\subsection{Background and motivation}

As  shown by Lorentz \cite{Lorentz} (see also \cite{Oseen,Galdi},
\S\ref{S2.1}), the fundamental solution
$\{U_{ij}(x)\}_{i,j=1,\ldots,n}$ of the Stokes system in the whole
space $\R^n$ has the same decay properties as that for the Laplace
equation, namely (for $n \ge 3$)
\[\label{eq1-4}
|U_{ij}(x)| \lec |x|^{2-n}.
\]
(We denote $A \lec B$ if there is some constant $C$ so that $A \le
CB$.) As a result, when the force is small (of order $\e$) and
sufficiently localized (i.e. the force decays sufficiently fast),
one can construct the solutions to the Navier-Stokes equations with
the same decay
\[
\label{eq1-6}
|u_{i}(x)| \lec \e\bka{x}^{2-n}, \quad \bka{x} := (\cmtm{2}+|x|^2)^{1/2}.
\]
By a standard cut-off argument, one can get solutions with the same
decay in an exterior domain (see \cite{Finn}).

However, when the domain is the half space $\R^n_+$ with no-slip
boundary condition, the Green tensor
$\{G_{ij}(x,y)\}_{i,j=1,\ldots,n}$ to \eqref{eq1-1} has a faster
decay rate than \eqref{eq1-4},
\[
\label{eq1.5} |G_{ij}(x,y)| \lec |x|^{1-n}, \quad (|y|\le 1\ll |x|),
\]
(see Section \ref{sec2} for detailed review),
and one can construct solutions to \eqref{eq1-2} with
the same decay (see e.g.~\cite{ChangHuakang}, \cite{Galdi})
\[
\label{eq1-8}
|u_{i}(x)| \lec \e\bka{x}^{1-n}
\]
for small localized forces. 

This project starts with the following
intuition: For fixed $|y| \lec 1$ (corresponding to localized force), the decay of $G_{ij}(x,y)$ in $x$ should be similar to the
Poisson kernel of \eqref{eq1-1}.
It has been shown by  Odqvist
\cite[\S2]{Odqvist} (see \S \ref{S2.2})
 that the  Poisson tensor of \eqref{eq1-1} is
\[
K_{ij}(x)= \frac{2 x_nx_ix_j}{\om_n |x|^{n+2}},
\]
where $\om_n=\frac {2 \pi^{n/2}}{n \Ga(n/2)}$
 is the volume of the unit ball in $\R^n$.
Thus we expect that
\[
\label{eq1-11}
|G_{ij}(x,y)| \lec \frac{x_n}{ |x|^{n}}, \quad (|y|\lec 1 \ll |x|).
\]
For $x_n \sim |x|$, this estimate reduces to \eqref{eq1.5}, while it
implies more decay than \eqref{eq1.5} for $x_n \ll |x|$. As a
result, the Navier-Stokes flow for a small localized force is
expected to have the same decay as the Green tensor. The goal of
this paper is justify this intuition and identify the leading
asymptotic profile of solutions of  \eqref{eq1-2} with small
localized force.

\subsection{Main results}

Section \ref{sec2} is concerned with the refined upper bounds for the Green
tensor and its derivatives of the Stokes system in $\R^n_+$ for $n \ge 3$ and $n=2$. In particular, when $n\ge 3$, for $x,y \in \R^n_+$ we have
\[
\label{th1.1-1} |G_{ij}(x,y)|\le \frac {C_0
x_ny_n}{|x-y|^{n-2}|x-y^*|^{2}},\qquad i,j\in\{1,\ldots,n\},
\]
where the constant $C_0>0$ is independent of $x,y\in \R^n_+$, and recall $y^*
= (y',-y_n)$ for $y=(y',y_n)$.
Furthermore, when $j=n$, the estimate \eqref{th1.1-1} can be
improved as
\[
\label{th1.1-2} |G_{in}(x,y)|\le \frac {C_0 x_n
y_n^2}{|x-y|^{n-2}|x-y^*|^{3}}.
\]
The above estimates justify \eqref{eq1-11} and imply extra decay when $j =n$ and $\abs{y}\ll |x|$.
 \cmts{See Theorems \ref{th2.2} for the above estimates, and \eqref{eq:intro-2-2-1} and Theorem \ref{th2.5b} for refined gradient estimates.}

In Section \ref{sec3}, we identify the leading profile
of the Navier-Stokes flows in $\R^n_+$, $n\ge 3$, for small localized forces.
To be more precise, suppose that $\abs{f(x)}\lesssim \ve\langle x
\rangle^{-a}$ and $\abs{F(x)}\lesssim \ve\langle x \rangle^{-a+1}$
with $a \in (n+1,n+2)$ for sufficiently small $\ve>0$. Then, there
exists a unique solution $(u, p)$ of the Navier-Stokes equations
\eqref{eq1-2} with $\abs{u(x)}\lesssim \frac{ \epsilon
x_n}{\bka{x}^n}$ and, furthermore, its asymptotics is given as
\begin{equation}\label{eq1.2-1}
u_i(x)=\sum_{j=1}^{n-1}K_{ij}(x)\tilde{b}_j + O\left(\frac{\ve
x_n}{\langle x \rangle^{a-1} }\right),
\end{equation}
where
\begin{equation}\label{eq1.2-2}
\tilde{b}_j = \int_{\R^n_+} \bket{ u_n(y)u_j(y) +y_n
f_j(y)-F_{nj}(y)} dy, \quad (j <n).
\end{equation}
Here, for simplicity, we assume that $a\ \in (n+1,n+2)$ but it
suffices to restrict $a>n+1$ (see Theorem \ref{nse-200} for the
details). On the other hand, for any given small numbers
$\tilde{b}_1, \tilde{b}_2,\cdots, \tilde{b}_{n-1}$ we construct a
solution of the Navier-Stokes equations satisfying \eqref{eq1.2-1}
and \eqref{eq1.2-2} (consult Theorem \ref{nse-600}). For the Stokes
system, we present similar formulas including two dimension for fast decaying $f$ and $F$ without smallness assumption (see
Theorem \ref{stokes-100}).

In vector form, with $(\vec K_j)_i = K_{ij}$, \eqref{eq1.2-1}
reads
$
{u(x) = \sum_{j=1}^{n-1} \vec K_{j}(x)\tilde{b}_j +
\textrm{error}}.
$
Thus the leading asymptotic of the solution is given by a
linear combination of $\vec K_1 ,\ldots, \vec K_{n-1}$. That $\vec K_n$ is not present is because a solution of \eqref{stokes-KMT-10}-\eqref{stokes-KMT-20} should have zero flux on any hemisphere $S_R^+= \bket{ x\in \R^n_+, |x|=R}$, while $\vec K_n$  has nonzero flux.

To derive \eqref{eq1.2-1}, it is required to estimate the
derivatives of the Green tensor.
However it is not an easy task, as the formulas for the Green tensor
span more than one full page in the literature (see
\cite[Appendix 1]{MR548252} for $n=2,3$, and \cite[IV.3]{Galdi} for
higher dimensions). Fortunately, we are able to refine the approach
of \cite[Appendix 1]{MR548252} and derive estimates for
derivatives of $G_{ij}$ for $n\ge 2$,
\begin{equation}\label{eq:intro-2-2-1}
\abs{\nb_x^\al \nb_y ^\be G_{ij}(x,y)}\le  \frac{ C_m
x_n}{|x-y|^{n-2+m}|x-y^*|}
\end{equation}
for any multi-indices $\al$ and $\be$ with $|\al|+|\be|=m>0$ and
$\al_n=0$ (see Theorem \ref{th2.5b}). 
\cmt{We emphasize
that the factor $x_n$ in \eqref{eq:intro-2-2-1} is lost only if
$\al_n>0$ and differentiations  in the $y$ variable
does not kill the $x_n$ factor in \eqref{eq:intro-2-2-1}. This is important for the refined error
estimates, which contain the $x_n$ factor, in \eqref{eq1.2-1} and Theorem \ref{nse-200}.}

\cmts{As applications, we consider the asymptotics of \emph{general} solutions in $\R^n_+$ in Theorem \ref{application-KMT-100} under various smallness assumptions on the forces or the solutions, and we also consider similar questions when we further remove the boundary condition in a neighborhood of the origin in Theorem \ref{nse-500}. 
The latter} turns out to be a type of aperture
problem and we recover previously known asymptotic profiles of
solutions with a refined decay estimate for error terms (see Theorem
\ref{nse-500} for the details and compare with \cite{BP1992} and
\cite{Galdi}).

\cmts{In Section \ref{sec4},
we extend the methods of Section \ref{sec3} and 
study the asymptotics of \emph{fast decaying} solutions of the
Navier-Stokes equations in the \emph{whole space and exterior domains} in $\R^n$, $n \ge 3$,
where by fast decaying solution we mean 
a solution which decays faster than the fundamental solution, usually due to cancellation.}
For general small localized forces, solutions are
expected to decay like \eqref{eq1-6}. For example, in case of three
dimensional exterior domains, it was shown in \cite{NP00} that
leading asymptotic of the solution is a minus one homogeneous
profile, which is nothing but one of the Slezkin-Landau solutions of
(NS) (see \cite{Korolev-Sverak}).
However, if we assume further certain cancelation of the force, one may
expect an extra decay such as \eqref{eq1-8}. Indeed,
we prove that for such a case the solutions satisfy the decay \eqref{eq1-8}
and, in addition, their asymptotics are given by
\[
u(x) = b_0 \nb E(x)+\sum_{(k,j) \not = (n,n)} a_{jk}\Phi^{jk} (x) +
o(|x|^{1-n})
\]
for some constants $b_0$ and $a_{jk}$, where $ \Phi^{jk}_i = \pd_k
U_{ij}$ and $E$ is the fundamental solution of Laplace equation (see Proposition \ref{thm:exterior1},
Theorems \ref{thm:whole2} and \ref{thm:exterior2} for the details).

Finally in the Appendix we study the nonexistence of axisymmetric
self-similar solutions of \eqref{eq1-2} in $\R^3_+$ under suitable
boundary conditions. It is relevant to the asymptotic problem since their existence would be an obstacle to proving \eqref{eq1.2-1} which has faster decay than self-similar solutions.

In this paper we do not consider the asymptotic formula for 
two dimensional Navier-Stokes equations, for which we do not know a general existence theory of solutions satisfying the decay \eqref{eq1-8} even in the whole
space, 
because the nonlinear term does not have enough decay. To get existence for dimension two,
one usually needs either some symmetry assumptions on the forces (and hence the solutions, see 
e.g.~\cite{GPS} for aperture problems,  \cite{Yamazaki09} for the whole space, and
\cite{Yamazaki11} for exterior domains),
or 
the solutions have to be close to some special flows to ensure that the solutions decay sufficiently fast; see e.g.~\cite{HW}. 

After a preprint of this paper was posted to arXiv (arXiv:1606.01854v1), Professor D.~Iftimie kindly informed us that a formula similar to \eqref{eq1.2-1} for dimension three,
with the asymptotic profile spanned by the
Poisson kernel only, also appeared in the thesis of Dr.~A.~Decaster \cite[Remark 4.2.4]{Decaster-thesis}, with the proof in its Section 4.4. Our error estimate is more refined due to our new Green tensor estimates.

\section{Green tensor of the Stokes system in the half space} \label{sec2}

In this section we derive refined estimates of the Green tensor of the stationary Stokes system in the half space
$\R^n_+$, $n \ge 2$. We first recall in \S\ref{S2.1} the Lorentz tensor, which is the fundamental solution of the stationary Stokes system  in $\R^n$. We then recall in \S\ref{S2.2} the Odqvist tensor, which is the Poisson kernel of the stationary Stokes system  in $\R^n_+$. We finally study in \S\ref{S2.3} the Green tensor.

Let $n \ge 2$ and
$E(x)$ and $\Phi(x) = \Phi(|x|)$ be the fundamental solutions of the
Laplace and biharmonic equations in $\R^n$,
\begin{equation}
\label{eq2.1}
-\De E = \de,\quad \De^2 \Phi =\de,
\end{equation}
where $\de$ is the Dirac delta function. 
Recall
\begin{equation}
E(x) = 2\ka |x|^{2-n} \quad (n \ge 3); \quad E(x) =- 2\ka \log {|x|} \quad (n=2),
\end{equation}
where $\ka = \frac {1 }{2n(n-2)\om_n}$ if $n\ge 3$ and $\ka = \frac 1{4\pi}$ if $n=2$,
$\om_n = |B_1^{\R^n}|=\frac {2 \pi^{n/2}}{n \Ga(n/2)}$, and
$\nb E = - \frac x{n\om_n |x|^{n}}$ for all $n \ge 2$.
We can integrate $\pd_r (r^{n-1} \Phi') = -r^{n-1} E$ to get an explicit formula for $\Phi$:
\EQ{
\Phi(x) = \frac {|x|^2}{8\pi}( \log & |x| - 1) \quad (n=2); \qquad  %
\Phi(x) = -\ka \log |x| \quad (n=4);
\\
&\Phi(x) = \frac {\ka}{(n-4) } |x|^{4-n}\quad (n=3  \text{ or } n\ge 5).
}

\subsection{Lorentz tensor $U_{ij}$ in $\R^n$}
\label{S2.1}

The Lorentz tensor is the fundamental solution of the Stokes system in
$\R^n$, $n \ge 2$, (Lorentz \cite{Lorentz},  see \cite{Oseen} and \cite[\S IV.2]{Galdi}).
The Lorentz tensor $\vec U_j(x) = (U_{ij}(x))_{i=1}^n$ and $q_j(x)$
satisfy, for each fixed $j=1,\ldots,n$,
\[
\label{eq2.3}
-\De \vec U_{j} + \nb q_j = \de e_j, \quad \div \vec U_{j} =0, \quad
(x\in\R^n).
\]
Above $e_j$ is the unit vector in $x_j$ direction. 
 Component-wise,
\[
-\De U_{ij} + \pd_i q_j = \de \de_{ij}, \quad \pd_i U_{ij} =0, \quad
(x\in\R^n).
\]
Taking $\div$ of the first equation of \eqref{eq2.3}, we get
$\De q_j= \pd_j \de$ in the sense of distributions.  In view of \eqref{eq2.1}, we can take
$q_j = -\pd_j E$.  Thus $-\De U_{ij} = \de_{ij} \de + \pd_i \pd_j E$,
and we can take
\begin{equation}
\label{eq2.5} U_{ij} = (-\de_{ij} \De + \pd_i \pd_j)\Phi = \de_{ij} E
+ \pd_i \pd_j\Phi, \quad q_j = -\pd_j E.
\end{equation}

For dimension $n=2$, we have
\[
\label{Uij.def.n2}
U_{ij}(x) =\frac {1}{4\pi} \bkt{- \de_{ij}\log |x|+ \frac {x_i
    x_j}{|x|^2} }, \quad q_j(x) = \frac {x_j}{2\pi|x|^2}.
\]

For dimension $n\ge3$, we have
\[
\label{Uij.def.n3}
U_{ij}(x) = \frac 1{2n(n-2)\om_n} \bkt{ \frac {\de_{ij}}{|x|^{n-2}} +
  (n-2)\frac {x_i x_j}{|x|^n} }, \quad q_j(x) = \frac
{x_j}{n\om_n|x|^n}.
\]

Summarizing, for $n \ge 2$,
\[
\label{Uij.def}
U_{ij}(x) = \frac {1}{2}\de_{ij} E(x) +\frac{1}{2 n\om_n} \frac {x_i
  x_j}{|x|^n} , \quad q_j(x) = \frac {x_j}{n\om_n|x|^n}.
\]

\subsection{Odqvist tensor $K_{ij}$ in  $\R^n_+$}
\label{S2.2}
The Odqvist tensor $K_{ij}$ is the Poisson kernel for the Stokes
system in the half space $\R^n_+$, $n \ge 2$. A solution $(u,p)$ of the
homogeneous Stokes system in the half space $\R^n_+$ with boundary
data $ \phi:\Si=\pd \R^n_+\to\R^n$ is given by
\[
u_i(x) = \int_{\Si} K_{ij}(x-z)\phi_j (z) dz, \quad p(x) = \int_{\Si}
k_j(x-z)\phi_j (z) dz,
\]
where
\[
\label{Kij.def}
K_{ij} = 2(\pd_n U_{ij} +  \pd_j U_{in} - \de_{jn}q_i), \quad k_j = - 4 \pd_j q_n .
\]
One computes directly using \eqref{Kij.def}, \eqref{Uij.def} and
\eqref{Uij.def.n2} to get, for $n \ge 2$,
\[
K_{ij}(x)= \frac{2 x_nx_ix_j}{\om_n |x|^{n+2}}, \quad k_j (x)= -\pd_j
\frac{4 x_n}{n\om_n |x|^{n}}.
\]
One can verify that, when $x_n>0$, using \eqref{Kij.def}, $\De U_{ij} = \pd_i q_j =
- \pd_{ij}E$, and $\pd_i U_{ij}=0$,
\EQ{
-\De K_{ij} + \pd_i k_j &= 2(-\pd_n \pd_{ij}E - \pd_j \pd_{in}E - 0)+
4 \pd_{ij}\pd_nE =0, \\ \pd_i K_{ij} &= 2(\pd_n \pd_i U_{ij} +
\pd_{j}\pd_i U_{in} + \de_{jn} \De E) =0.
}
One can also verify that, for $ \phi\in C^1_c(\Si;\R^n)$,
\[
\int_{\Si} K_{ij}(x-z)\phi_j (z) dz \to \phi_i(x')\quad \text{as } x_n
\to 0_+.
\]

The above is derived by Odqvist
\cite[\S2]{Odqvist} using double layer potentials, see also \cite[\S IV.3]{Galdi}.
One may also implicitly derive $K_{ij}$  using Fourier transform
in $x'$ as in  Solonnikov \cite{MR0415097}, see also Maekawa-Miura \cite{Maekawa-Miura}.

\subsection{Green tensor $G_{ij}$ in  $\R^n_+$}
\label{S2.3}
For the Stokes system in the half space $\R^n_+$, $n \ge 2$,  the Green tensor $\vec G_j(x,y) =
(G_{ij}(x,y))_{i=1}^n$ and $g_j(x,y)$, for each fixed $j=1,\ldots,n$ and $y \in \R^n_+$, satisfy
\[
-\De_x \vec G_{j} + \nb_x g_j = \de_y e_j, \quad \div_x \vec G_{j} =0,
\quad (x\in\R^n_+),
\]
\[
 \vec G_{j}(x,y)|_{x_n=0}=0.
\]
In components,
\[
\label{eq2.11}
-\De_x G_{ij} + \pd_{x_i} g_j = \de (x-y)\de_{ij}, \quad \pd_{x_i} G_{ij}
=0, \quad (x\in\R^n_+),
\]
\[
\label{eq2.12}
G_{ij}(x,y)|_{x_n=0}=0.
\]

Denote
\[
y^* = (y',-y_n) \quad \text{if }y=(y',y_n); \quad \e_j = 1 -2 \de_{nj}.
\]
Thus $y^*_j = \e_j y_j$. By \eqref{Uij.def}, if $i=j$, then $U_{ii}$
is even in all $x_k$. If $i\not =j$, $U_{ij}$ is odd in $x_i$ and
$x_j$, but even in $x_k$ if $k\not =i,j$. In particular, with $k=n$,
\[
\label{eq2.19}
U_{ij}(x^*)=\e_i\e_jU_{ij}(x).
\]

Let
\[
\label{tildeG.def}
\tilde G_{ij}(x,y)=U_{ij}(x-y)-\e_j U_{ij}(x-y^*).
\]
At $x_n=0$, with $z =(x',0)-y$,
\[
\tilde G_{ij}(x,y )|_{x_n=0} = U_{ij}(z  )-\e_j U_{ij}(z ^* )=
(1-\e_i)U_{ij}(z )
\]
by \eqref{eq2.19}. Thus
\[
\tilde G_{ij}(x,y)|_{x_n=0} = 0\quad(i<n);\quad \tilde
G_{nj}(x,y)|_{x_n=0} =2U_{nj}(x'-y',-y_n).
\]

We can now decompose
\[
\label{Gij.dec}
G_{ij}=\tilde G_{ij}+W_{ij},
\]
where $W_{ij}$ is given by the boundary layer integral
\[
\label{Wij.def}
W_{ij}(x,y)=-2\int_{\Si}K_{in}(x-\xi )U_{nj}(\xi -y)d\xi .
\]

\begin{lemma}
\label{th2.1}Fix $n \ge 3$.

(i) $G_{ij}(x,y)=G_{ji}(y,x)$;\quad

(ii) $G_{ij}(x,y)=\la^{n-2} G_{ij}(\la x,\la y)$ for any $\la>0$.
\end{lemma}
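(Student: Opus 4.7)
The plan is to derive both assertions from the defining equations \eqref{eq2.11}--\eqref{eq2.12} via PDE reciprocity and rescaling, without invoking the explicit decomposition \eqref{Gij.dec}.

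For (i), I would use the Green--Betti reciprocity identity for the Stokes system. Fix distinct $y,z \in \R^n_+$ and indices $j,k$, set $(u,p) = (\vec G_j(\cdot,y), g_j(\cdot,y))$ and $(v,q) = (\vec G_k(\cdot,z), g_k(\cdot,z))$, and integrate
\[
u \cdot (-\De v + \nb q) - v \cdot (-\De u + \nb p)
\]
over the punctured region $\Om_{\ep,R} = (\R^n_+ \cap B_R) \setminus \overline{B_\ep(y) \cup B_\ep(z)}$. Integration by parts, combined with $\div u = \div v = 0$ and the Dirichlet condition $u = v = 0$ on $\pd \R^n_+$, kills all surface terms on the flat part of the boundary. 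The surface terms on $\pd B_R$ vanish as $R \to \infty$ thanks to the fundamental-solution-type bounds $|G_{ij}(x,y)| \lec |x-y|^{2-n}$ and $|\nb_x G_{ij}(x,y)| \lec |x-y|^{1-n}$ for $n \ge 3$, inherited from the Lorentz tensor and its reflected image in \eqref{tildeG.def}; the boundary layer $W_{ij}$ in \eqref{Wij.def} satisfies the same decay by standard single-layer estimates for the Poisson kernel $K$ against the fundamental solution $U$. On the small spheres $\pd B_\ep(y)$ and $\pd B_\ep(z)$, the classical singularity analysis of the Stokes fundamental solution shows that, in the limit $\ep \to 0$, the surface integrals contribute $v_j(y) = G_{kj}(z,y)$ and $-u_k(z) = -G_{jk}(y,z)$ respectively, while the pressure-velocity boundary terms drop out at lower order. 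Equating the two sides yields $G_{kj}(z,y) = G_{jk}(y,z)$, which is (i) after relabeling.

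For (ii), I would use uniqueness in the decaying class. Setting
\[
\tilde G_{ij}(x,y) = \la^{n-2} G_{ij}(\la x, \la y), \qquad \tilde g_j(x,y) = \la^{n-1} g_j(\la x, \la y),
\]
a direct chain-rule computation, together with the scaling $\de(\la(x-y)) = \la^{-n} \de(x-y)$, shows that $(\tilde G_{\cdot j}, \tilde g_j)$ solves the same system \eqref{eq2.11}--\eqref{eq2.12} with point source at the same $y$, satisfies the Dirichlet condition on $x_n = 0$, and retains the decay $|x-y|^{2-n}$ at infinity. A Liouville-type uniqueness statement for the homogeneous Stokes system in $\R^n_+$ with zero boundary data and decay at infinity, applied to the difference $G - \tilde G$, then forces $\tilde G = G$.

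The main obstacle is the careful justification of the surface integrals in the reciprocity argument for (i): the vanishing at infinity relies on the baseline pointwise bound on $G$ and $\nb G$, and the extraction of the pointwise values $G_{kj}(z,y)$ and $G_{jk}(y,z)$ requires the classical singularity analysis of the Lorentz tensor near its source, combined with the observation that $W_{ij}(\cdot, y)$ is smooth near $y$ as long as $y \in \R^n_+$. Once these ingredients are assembled, both parts of the lemma reduce to routine computation.
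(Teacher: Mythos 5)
Your proof of (i) follows essentially the same Betti reciprocity argument as the paper's: integrate the reciprocity identity over the punctured half space, kill the boundary terms on $\pd\R^n_+$ using the zero trace and the divergence-free condition, kill the terms at infinity by the decay of $G$, and read off $G_{ji}(y,x)$ and $G_{ij}(x,y)$ from the small-sphere contributions; the paper merely writes the computation more compactly with the superscript shorthand $G^x(z)=G(z,x)$.

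For (ii), however, you take a genuinely different route from the paper. The paper computes the scaling directly from the explicit decomposition $G=\tilde G+W$ of \eqref{Gij.dec}--\eqref{Wij.def}: $\tilde G$ is a difference of Lorentz tensors, each $(2-n)$-homogeneous, and $W$ is the surface convolution of $K_{in}$ (degree $1-n$) against $U_{nj}$ (degree $2-n$) over the $(n-1)$-dimensional boundary, so after the change of variable $\xi\mapsto\la\xi$ the degrees combine to give $(2-n)$-homogeneity outright. You instead rescale the boundary-value problem: you verify via the chain rule and $\de(\la\cdot)=\la^{-n}\de$ that $\la^{n-2}G(\la x,\la y)$ with the companion pressure $\la^{n-1}g_j(\la x,\la y)$ solves the identical system \eqref{eq2.11}--\eqref{eq2.12} with the same source at $y$, note that the leading singularity $U_{ij}(x-y)$ cancels exactly in the difference $G-\tilde G$ precisely because $U$ is $(2-n)$-homogeneous, and then appeal to a Liouville/uniqueness result for decaying solutions of the homogeneous Stokes system in $\R^n_+$. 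Both arguments are sound. Yours is more abstract and would adapt to a domain where no closed-form Green tensor is available, but it relies on the uniqueness statement, which in this paper is Lemma \ref{th:uniqueness} quoted from the literature and stated only in Section \ref{sec3}; there is no circularity, but you should flag that dependency. The paper's argument is self-contained and shorter, since the explicit decomposition was already built in \S\ref{S2.3}.
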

\begin{proof}
(i) The three dimensional  case can be found in Odqvist \cite[p.~358]{Odqvist}. The
  higher dimensional case is similar: For $x,y\in \R^n_+$,
let $\Om_\e = \R^n_+ \bs ( {B_\e(x)}\cup {B_\e(y)})$ for $0<\e \ll 1$. Put the second argument as superscript, e.g., $G(z,x)=G^x(z)$. One has
\EQ{
0 &= \lim_{\e \to 0_+} \sum_k \int_{\Om_\e}
\bket{ G_{ki}^x[\De_z G_{kj}^y-\pd_{z_k}g_j^y] -G_{kj}^y[\De_z G_{ki}^x-\pd_{z_k}g_i^x]  }dz
\\
&= \lim_{\e \to 0_+} \sum_k \int_{ {\pd B_\e(x)}\cup {\pd B_\e(y)}}
\bket{ G_{ki}^x[\nb_z G_{kj}^y-e_k g_j^y] -G_{kj}^y[\nb_z G_{ki}^x-e_k g_i^x]  }\cdot \nu
\\
&
=[0-G_{ji}(y,x)] -[ - G_{ij}(x,y)+0].
}
We have used the 
cancellation of $\nb_z G^x_{ki} \cdot \nb_z G^y_{kj}$.
We have also used \eqref{eq2.11}, \eqref{eq2.12} and the decay at infinity of $G_{ij}$.
 
(ii) It follows from \eqref{Gij.dec}, \eqref{Wij.def}, and the scaling properties of $U_{ij}$ and $K_{ij}$.
\end{proof}

In the following we derive an explicit formula for $G_{ij}$, following
the approach of Maz'ja, Plamenevski{\u\i}, and Stupjalis
\cite[Appendix 1]{MR548252} for $n=2,3$. See \cite[IV.3]{Galdi} for formulas for higher dimensions.
However, our formula is much more compact, and is suitable
for estimates.

\begin{theorem}
\label{th2.2a} Fix $n \ge 2$. For $x,y \in \R^n_+$, denote $w=x-y$, $z=x-y^*$,  $\th = \frac {x_ny_n}{|z|^{2}}\in (0,\frac14]$,
\EQ{
\label{Qs.def}
Q_s =  \frac 1{|w|^{s}} - \frac 1{|z|^{s}} - \frac{2s x_ny_n}{  |z|^{s+2}},\quad (s>0);\quad
Q_0 =-\log |w|+ \log |z| - \frac{2 x_ny_n}{  |z|^{2}}.
}
 Then
\EQ{
\label{Gij:formula}
G_{ij}(x,y)
={\de_{ij}} \ka Q_{n-2} + \frac1{2n\om_n} w_iw_j Q_n +
\frac{x_ny_n (w_iw_j+z_i\e_j z_j)}{\om_n |z|^{n+2}} .
}
\end{theorem}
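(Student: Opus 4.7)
My plan is to derive \eqref{Gij:formula} by computing both summands of the decomposition $G_{ij}=\tilde G_{ij}+W_{ij}$ from \eqref{Gij.dec}--\eqref{Wij.def} explicitly. Substituting \eqref{Uij.def} into \eqref{tildeG.def} and using $|x-y^*|=|z|$ together with $(x-y^*)_k=z_k$ gives directly
\[
\tilde G_{ij}(x,y) = \tfrac12\de_{ij}\bkt{E(w)-\ep_j E(z)} + \tfrac1{2n\om_n}\bkt{\tfrac{w_iw_j}{|w|^n} - \tfrac{z_i\ep_jz_j}{|z|^n}}.
\]
Comparing with the RHS of \eqref{Gij:formula} tells me what $W_{ij}$ must supply: the boundary-layer corrections $-2(n-2)\ka\de_{ij}x_ny_n/|z|^n$ and $-w_iw_j/(2n\om_n|z|^n)+x_ny_nz_i\ep_jz_j/(\om_n|z|^{n+2})$, plus, when $i=j=n$, an additional $-2\ka|z|^{2-n}$ to cancel the doubled $E(z)$ that results from $\ep_n=-1$.

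The hard step is then the explicit evaluation of $W_{ij}$. On $\Si$ one has $(x-\xi)_n=x_n$, so
\[
K_{in}(x-\xi) = \tfrac{2x_n^2(x_i-\xi_i)}{\om_n|x-\xi|^{n+2}},
\]
while $U_{nj}(\xi-y)|_{\xi_n=0}$ expands via \eqref{Uij.def} using $(\xi-y)_n=-y_n$ into an $E(\xi-y)$ piece (present only when $j=n$) and a piece proportional to $-y_n(\xi_j-y_j)/|\xi-y|^n$. Inserting into \eqref{Wij.def} reduces $W_{ij}$ to a small family of half-plane integrals of the types
\[
\int_{\R^{n-1}} \tfrac{(x_i-\xi_i)\,(\xi_k-y_k)}{|x-\xi|^{n+2}\,|\xi-y|^n}\,d\xi', \qquad \int_{\R^{n-1}} \tfrac{(x_i-\xi_i)\,E(\xi-y)}{|x-\xi|^{n+2}}\,d\xi'.
\]
These can be treated systematically by combining (i) the Poisson representation in $\R^n_+$ applied to $|\xi-y|^{2-n}$, whose boundary trace on $\Si$ coincides with $|\xi-y^*|^{2-n}$; (ii) differentiation in $y$ of the normalization $\int_{\R^{n-1}}\tfrac{2x_n}{n\om_n|x-\xi|^n}\,d\xi'=1$, which generates the $x_ny_n/|z|^{s+2}$ factors; and (iii) the identity $\pd_{\xi_i}|x-\xi|^{-n}=n(x_i-\xi_i)/|x-\xi|^{n+2}$ for integration by parts in the tangential indices $i<n$, while $i=n$ is handled by $x_n-\xi_n=x_n$ on $\Si$.

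Once $W_{ij}$ is in closed form, the final step is the algebraic collapse: together with the reflected pieces of $\tilde G_{ij}$, the $|z|^{-s}$ and $x_ny_n/|z|^{s+2}$ outputs regroup into $Q_s=|w|^{-s}-|z|^{-s}-2sx_ny_n/|z|^{s+2}$ at $s=n-2$ and $s=n$, and the leftover rational pieces assemble into the explicit surface term $x_ny_n(w_iw_j+z_i\ep_jz_j)/(\om_n|z|^{n+2})$, producing \eqref{Gij:formula}. The main obstacle is the bookkeeping in the previous step: keeping the cases $j<n$ and $j=n$ separate, tracking all rational outputs, and recognizing the $Q_s$ pattern when it emerges. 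At the end I would run two consistency checks: the boundary condition $G_{ij}|_{x_n=0}=0$, which is immediate from \eqref{Qs.def} since $x_n=0$ forces $w=z$ (hence $Q_s=0$) and $x_ny_n=0$; and the symmetry $G_{ij}(x,y)=G_{ji}(y,x)$ from Lemma~\ref{th2.1}(i), which reduces to a short algebraic identity for the $z_i\ep_jz_j$ term and guards against arithmetic errors in the collapse.
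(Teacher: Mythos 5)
Your plan follows the same road as the paper: start from the decomposition $G_{ij}=\tilde G_{ij}+W_{ij}$, use the half-space Poisson kernel for the Laplacian to evaluate the boundary-layer integral, and then let the $Q_s$ pattern emerge in the algebraic collapse. However, your attack on $W_{ij}$ is much heavier than necessary, and the single observation that makes the paper's computation short is missing. The paper notices that, on $\Sigma$,
\[
K_{in}(x-\xi)=\bigl(\de_{in}-x_n\pd_{x_i}\bigr)P(x-\xi),\qquad
U_{nj}(\xi-y)=\tfrac12\bigl(\de_{nj}-y_n\pd_{y_j}\bigr)E(\xi-y),
\]
with $P$ the harmonic Poisson kernel. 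These differential operators act in $x$ and $y$ respectively and hence commute with the $\xi'$-integration, so
\[
W_{ij}(x,y)=-\bigl(\de_{in}-x_n\pd_{x_i}\bigr)\bigl(\de_{nj}-y_n\pd_{y_j}\bigr)E(x-y^*),
\]
by a single application of the identity $\int_\Sigma P(x-\xi)E(\xi-y)\,d\xi=E(x-y^*)$, i.e.\ \eqref{Mazya16}. Expanding the two first-order operators then gives \eqref{EQ2.38} directly, with no case split on $j<n$ versus $j=n$ and no separate half-plane integrals. Your items (i)--(iii) hover around this idea — (i) is exactly \eqref{Mazya16}, and (iii) is morally the $\pd_{x_i}$ factor disguised as an integration by parts in $\xi$ — but you never assemble them into the operator factorization, so as written you would still be facing the several integrals you list, and the ``algebraic collapse'' step is far more delicate than you estimate.

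Two smaller points. Your item (ii) proposes differentiating in $y$ the normalization $\int_{\R^{n-1}}\frac{2x_n}{n\om_n|x-\xi|^n}\,d\xi'=1$, but that integral has no $y$-dependence; what you actually want is to differentiate \eqref{Mazya16} in $y$, which is precisely what the operator factorization encodes. Also, in your boundary check, $x_n=0$ does not force $w=z$ (they differ in the $n$-th component), only $|w|=|z|$; that is all that is needed for $Q_s=0$, but the statement as written is inaccurate.
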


{\it Remark.} Recall
 $\ka = \frac {1 }{2n(n-2)\om_n}$ if $n\ge 3$ and $\ka = \frac 1{4\pi}$ if $n=2$.
Since
\[
\label{eq2.29}
|w|^2 = |x'-y'|^2 + x_n^2+y_n^2 - 2 x_n y_n = |z|^2 - 4x_n y_n =
|z|^2(1-4\th),
\]
 $Q_s$ is the remainder of the first order Taylor expansion of  $|w|^{-s} =|z|^{-s}(1-4\th)^{-s/2}$ when $\th \ll 1$ for $s>0$, and similarly for $Q_0$ as $\log |w| - \log |z| =\frac 12 \log (1-4\th)$. We need $Q_0$ only if $n=2$.
The definition of $Q_s$ is not continuous in $s$ as $s\to 0_+$. In fact, 
$\frac 1s Q_s \to Q_0$ as $s \to 0_+$.
This discrepancy is related to the choices of the coefficient $\ka$ for $n=2$ and $n \ge 3$.

\begin{proof} Recall $G_{ij} = \td G_{ij} +W_{ij}$.
By \eqref{Uij.def} we may rewrite
\EQ{
 \label{EQ2.30}
\td G_{ij}(x,y)
&=\frac12{\de_{ij}}(E(w)-\e_j E(z))+ \frac1{2n\om_n} \bke{\frac{w_iw_j}{|w|^n} -\frac{\e_j z_iz_j}{|z|^n}  }
\\
&=\frac12{\de_{ij}}\bke{ 2\ka Q_{n-2} +2\de_{jn}E(z) +\frac {2x_ny_n}{n \om_n |z|^n}}
\\
&\quad + \frac1{2n\om_n} \bke{w_iw_j Q_n + \frac{w_iw_j - \e_j z_iz_j}{|z|^n}  +\frac {2nx_ny_nw_iw_j}{ |z|^{n+2}}}.
}
Above we have used $2\ka\cdot 2(n-2) = \frac {2}{n \om_n}$ for $n\ge 3$ and $2\ka\cdot 2= \frac {2}{n \om_n}$ for $n=2$.

To compute $W_{ij}$ defined by \eqref{Wij.def}, we will use the identity that, for $x,y \in \R^n_+$, $n \ge 2$,
\[
\label{Mazya16}
\int_{\Si} P(x-\xi) E(\xi-y) d\xi = E(x-y^*), \quad P(x)=\frac {2x_n}{n \om_n |x|^n}.
\]
It is because $P(x)$ is the Poisson kernel of
the Laplace equation in $\R^n_+$, while $E(x-y^*)$
is the unique bounded (or sublinear if $n=2$) harmonic function in
$ \R^n_+$ with the boundary value $E(x-y)|_{x_n=0}$ for fixed $y$.
Note
\[
K_{in}(x-\xi) = \frac {2x_n^2 (x_i - \xi_i)} {\om_n |x-\xi|^{n+2}} = \bkt{ \de_{in} -x_n \frac{\pd}{\pd x_i}} P(x-\xi) ,
\]
and
\[
U_{nj}(\xi -y) =\frac 12 \bkt{\de_{nj} - y_n \frac{\pd }{\pd y_j} } E(\xi-y).
\]
Thus, using  \eqref{Wij.def} and \eqref{Mazya16},
 \begin{align}%
W_{ij}(x,y) &= - \bke{ \de_{in}-x_n \frac{\pd}{\pd x_i} } \bke{\de_{nj} - y_n \frac{\pd }{\pd y_j} }
\int_\Si P(x-\xi) E(\xi-y) \,d\xi
\\
&=  - \bke{ \de_{in}-x_n \frac{\pd}{\pd x_i} } \bke{\de_{nj} - y_n \frac{\pd }{\pd y_j} }
E(x-y^*).
\label{formula-wij}
\end{align}
Expanding the derivatives, with $z=x-y^*$,
\EQ{
\label{EQ2.38}
W_{ij}(x,y)
&=-\de_{in}\de_{jn}E(z)
-\frac{\de_{in}y_n (-\e_jz_j)}{n \om_n |z|^n}
-\frac{\de_{jn}x_n z_i}{n \om_n |z|^n}
\\
& \quad +  \frac 1{n\om_n} x_ny_n \bke{-\frac {\de_{ij}\e_j }{|z|^n}
+ n \frac{z_i\e_j z_j}{|z|^{n+2}} }.
}
Summing \eqref{EQ2.30} and  \eqref{EQ2.38}, and cancelling $\de_{jn}\de_{ij}E(z)$, we get
\EQ{
G_{ij}(x,y)
&={\de_{ij}} \ka Q_{n-2} + \frac1{2n\om_n} w_iw_j Q_n +
\frac{x_ny_n (w_iw_j+z_i\e_j z_j)}{\om_n |z|^{n+2}}  + \frac R{2n\om_n |z|^n}
}
with
\EQ{
R &=   2\de_{ij}x_ny_n + (w_iw_j-\e_j z_iz_j)
+2\de_{in}y_n \e_jz_j
-2\de_{jn}x_n z_i  - 2  \de_{ij}\e_j x_ny_n  =0.
}
The above shows \eqref{Gij:formula}.
\end{proof}

We next estimate $G_{ij}$. For this purpose, it is useful to know the geometry of the level sets of $\th = \frac {x_ny_n}{|z|^{2}}\in (0,\frac14]$.
For fixed $y\in\R^n_+$ and $c \in (0,\frac 14)$, the region $\th \ge c$ corresponds to a closed disk
\[
D_c=\bket{(x',x_n) :\quad  |x'-y'|^2 + (x_n - (\frac 1{2c}-1)y_n) ^2 \le \frac {1-4c}{4c^2} \, y_n^2},
\]
which is inside $\R^n_+$, increases as $c$ decreases, $\cap_ {0<c<1/4} D_c = D_{1/4}= \{ y\}$ and $\cup_{0<c<1/4} D_c = \R^n_+$.
We also have
\begin{align}
\label{zsim1}
C ^{-1}y_n<|z| < C y_n &\quad \text{if}\quad \frac 1{10}\le \th\le \frac 14,
\\
\label{zsim2}
|w|<|z|< C |w| &\quad \text{if}\quad 0<\th\le \frac 1{10},
\end{align}
for some constant $C$ independent of $x,y \in \R^n_{+}$. Estimate \eqref{zsim1} is because that the radius of $D_c$ is $C(c)y_n$, while
\eqref{zsim2} follows from
\eqref{eq2.29}.
 For different $y\in\R^n_+$, their corresponding $D_c$ are translation and dilation of each other.

\begin{lemma} \label{th:Qs-est} Fix $n \ge 2$. For $x,y \in \R^n_+$, denote $w=x-y$, $z=x-y^*$,  $\th = \frac {x_ny_n}{|z|^{2}}\in (0,\frac14]$, $Q_s$ be as in \eqref{Qs.def}, and
\EQ{
\label{Rs.def}
R_s =  \frac 1{|w|^{s}} - \frac 1{|z|^{s}} ,\quad (s>0);\quad
R_0 =-\log |w|+ \log |z|.
}
For $x \not = y$
we have, for $s \ge 0$,
\EQ{
\label{eq2.38}
0<R_s &\le C_s  |w|^{-s} \th + C_0 1_{s=0} \log(2+\frac {y_n}{|w|}),
\\
 |Q_s | &\le C_s  |w|^{-s} \th^2   + C_0 1_{s=0}  \log(2+\frac {y_n}{|w|}).
}
Above  %
$C_s$ is independent of $x,y\in \R^n_+$.
Moreover, for any $s \ge 0$,
for any  homogeneous polynomial $g(w')$ of degree $\deg g\ge 0$, for any multi-indices $\al$, $\be$ with $\al_n=\be_n=0$ and $m=|\al|+|\be|>0$,
\[
\label{eq2.39}
\nb _x^ {\al} \nb _y ^{\be} \bkt{g(w') R_s )} = \sum_{k=0}^m  f_k(w') R_{s+2k},
\]
\[
\label{eq2.40}
\nb _x^ {\al} \nb _y ^{\be} \bkt{g(w') Q_s } = \sum_{k=0}^m  f_k(w') Q_{s+2k},
\]
for some homogeneous polynomials $f_k(w')$
with $\deg f_k= \deg g +2k-m$.
\end{lemma}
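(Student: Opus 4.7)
The plan is to reduce all three assertions to the single algebraic identity $|w|^2=|z|^2(1-4\th)$ from \eqref{eq2.29}, together with the observation that for every tangential index $i<n$ we have $w_i=z_i$, so that $\pd_{x_i}|w|^2=\pd_{x_i}|z|^2=2w_i$ and $\pd_{y_i}|w|^2=\pd_{y_i}|z|^2=-2w_i$. For the pointwise bound \eqref{eq2.38} I would, for $s>0$, write $R_s=|z|^{-s}\bkt{(1-4\th)^{-s/2}-1}$ and $Q_s=|z|^{-s}\bkt{(1-4\th)^{-s/2}-1-2s\th}$, then use the integral remainders $(1-4\th)^{-s/2}-1=\int_0^\th 2s(1-4t)^{-s/2-1}\,dt$ and $(1-4\th)^{-s/2}-1-2s\th=\int_0^\th\int_0^t 4s(s+2)(1-4u)^{-s/2-2}\,du\,dt$. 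On $\bket{\th\le 1/10}$ the integrands are bounded by a constant $C_s$, producing $R_s\le C_s\th|z|^{-s}\le C_s\th|w|^{-s}$ and $|Q_s|\le C_s\th^2|z|^{-s}\le C_s\th^2|w|^{-s}$; on $\bket{\th>1/10}$ the trivial estimate $R_s,|Q_s|\lec|w|^{-s}$ can be absorbed into $C_s\th|w|^{-s}$ or $C_s\th^2|w|^{-s}$ since $\th\ge 1/10$. For $s=0$, the identity $R_0=\log(|z|/|w|)$ combined with $|z|\le|w|+2y_n$ gives $R_0\le\log(1+2y_n/|w|)\lec\log(2+y_n/|w|)$, which dominates on $\bket{\th>1/10}$; the Taylor bound $R_0\le C\th$ remains valid on $\bket{\th\le 1/10}$, and $Q_0=R_0-2\th$ inherits both estimates.

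For the derivative identities \eqref{eq2.39}--\eqref{eq2.40} I would first verify by direct differentiation the key recursions $\pd_{x_i}R_s=-sw_iR_{s+2}$ and $\pd_{x_i}Q_s=-sw_iQ_{s+2}$ for $s>0$ and $i<n$, with analogous $\pd_{y_i}$ formulas differing by a sign, together with the $s=0$ versions $\pd_{x_i}R_0=-w_iR_2$ and $\pd_{x_i}Q_0=-w_iQ_2$; these follow from $\pd_{x_i}\log|w|=w_i/|w|^2$ and $\pd_{x_i}\log|z|=w_i/|z|^2$ for $i<n$, plus a short cancellation for the $Q_0$ case. The identities then follow by induction on $m=|\al|+|\be|$: applying one more tangential derivative to a representative term $f_k(w')\,R_{s+2k}$ of the inductive sum, Leibniz and the recursions produce two contributions, $(\pd_{x_i}f_k)\,R_{s+2k}$ of coefficient degree $\deg g+2k-(m+1)$ and a constant multiple of $f_k\,w_i\,R_{s+2k+2}$ of coefficient degree $\deg g+2(k+1)-(m+1)$, which populate slots $k$ and $k+1$ of the new sum and preserve the stated degree bookkeeping. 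The argument for \eqref{eq2.40} is line-for-line the same after replacing $R$ by $Q$ throughout.

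The main obstacle---really a bookkeeping annoyance rather than a mathematical difficulty---is the discontinuity of the definition of $Q_s$ at $s=0$ flagged in the remark following Theorem \ref{th2.2a}: the uniform recursion $\pd_{x_i}Q_s=-sw_iQ_{s+2}$ would naively predict a vanishing derivative at $s=0$, whereas in fact $\pd_{x_i}Q_0=-w_iQ_2$. This forces a separate check of the $n=2$ base step of the induction, and is also the reason that the lemma tracks only the \emph{form} and \emph{degree} of the coefficients $f_k$ rather than their explicit values.
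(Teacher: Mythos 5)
Your proof is correct and takes essentially the same route as the paper: a Taylor expansion (with remainder) using $|w|^2=|z|^2(1-4\th)$ for the pointwise bounds, split at $\th=1/10$, and an induction on $m$ driven by the tangential recursions $\pd_{x_i}R_s=-d_sw_iR_{s+2}$, $\pd_{x_i}Q_s=-d_sw_iQ_{s+2}$ (with the $d_0=1$ caveat at $s=0$, which you also flag). The only genuine variation is cosmetic: for the $s=0$ log term you use the elementary inequality $|z|\le|w|+2y_n$ directly, whereas the paper invokes the geometric estimate \eqref{zsim1}; both work, and yours is marginally more self-contained.
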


Above and hereafter, the characteristic function $1_\om$ for a condition $\om$ is $1$ if $\om$ is true,
and $0$ if $\om$ is false.
We agree that $f=0$ if it is a polynomial with negative degree. Note that $f_k$ in \eqref{eq2.39} and  \eqref{eq2.40} are the same.

\begin{proof} We first show \eqref{eq2.38}. When $\th>\frac 1{10}$, we have $|w|< c|z|$ for some $c>1$ independent of $x,y$.
Thus \eqref{eq2.38} is trivial if $s>0$, and it is true when $s=0$ because $R_0$ and $Q_0$ are bounded by $1+\log\frac {|z|}{|w|}$, and by using \eqref{zsim1}.

Suppose now $0<\th < \frac 1{10}$. Recall $|w|^2 = |z|^2(1-4\th)$ by \eqref{eq2.29} and $|w|\sim |z|$.
By Taylor expansion,
\[
\label{eq2.42}
 |w|^{-s} = |z|^{-s} (1-4\th)^{-s/2} = |z|^{-s} (1+2s\th + O(\th^2)),
\]
\[
-\log |w| + \log |z| =-\frac 12 \log (1-4\th) = \frac 12(4 \th + O(\th^2)).
\]
Thus \eqref{eq2.38} follows.

Eqn.~\eqref{eq2.39} and \eqref{eq2.40} can be shown by induction on $m$,
using for $j<n$ that
\EQ{
\pd_{x_j} R_s &= -\pd_{y_j} R_s = - d_s R_{s+2}w_j,
\\
\pd_{x_j} Q_s &= -\pd_{y_j} Q_s = - d_s Q_{s+2}w_j,
}
where $d_s=s$ for $s>0$ and $d_0=1$.
\end{proof}

\begin{theorem}
\label{th2.2} Fix $n \ge 2$.  Let $x,y \in \R^n_+$ and $i,j\in\{1,\ldots,n\}$.
Then
\[
\label{th2.2-1}
|G_{ij}(x,y)|\le \frac {C_0 x_ny_n}{|x-y|^{n-2}\cdot|x-y^*|^{2}} + C_0 1_{n=2}
 \log(2+\frac {y_n}{|x-y|}).
\]
Moreover, when $j=n$,
\[
\label{th2.2-2}
|G_{in}(x,y)|\le \frac {C_0 x_n y_n^2}{|x-y|^{n-2}\cdot|x-y^*|^{3}} +  C_0 1_{n=2}  \log(2+\frac {y_n}{|x-y|}).
\]
Above $C_0$ is independent of $x,y\in \R^n_+$.
\end{theorem}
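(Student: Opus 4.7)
The plan is to combine the explicit formula \eqref{Gij:formula} from Theorem~\ref{th2.2a} with the pointwise bounds on $Q_s$ from Lemma~\ref{th:Qs-est}. Throughout, write $w = x-y$, $z = x-y^*$, and $\theta = x_n y_n/|z|^2 \in (0,1/4]$; by \eqref{eq2.29} one has $|w| \le |z|$.

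For the first estimate \eqref{th2.2-1}, I would bound each of the three terms of \eqref{Gij:formula} separately. For $n \ge 3$, Lemma~\ref{th:Qs-est} gives $|Q_{n-2}| \le C|w|^{2-n}\theta^2$, so the diagonal contribution $\delta_{ij}\kappa Q_{n-2}$ is at most $C(x_ny_n)^2/(|w|^{n-2}|z|^4) \le C x_n y_n/(|w|^{n-2}|z|^2)$ using $\theta \le 1/4$. For $n=2$ the same argument together with the $\log(2+y_n/|w|)$ piece of the bound on $|Q_0|$ yields the claimed logarithmic correction. The second term satisfies $|w_iw_j Q_n| \le |w|^2 \cdot C|w|^{-n}\theta^2$, which is controlled by the same expression. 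The final boundary term of \eqref{Gij:formula} is handled by the crude bound $|w_iw_j + z_i\epsilon_j z_j| \le 2|z|^2$, which gives $Cx_ny_n/|z|^n \le Cx_ny_n/(|w|^{n-2}|z|^2)$ after invoking $|w| \le |z|$.

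For the refined estimate \eqref{th2.2-2} when $j=n$, the extra factor $y_n/|z|$ arises through two separate mechanisms. In the first two terms of \eqref{Gij:formula}, it appears automatically by rewriting $\theta^2 = x_n^2 y_n^2/|z|^4 \le x_n y_n^2/|z|^3$, which is valid since $x_n \le |z|$. In the boundary term, the gain relies on the cancellation obtained from $\epsilon_n = -1$: one has $w_iw_n + z_i\epsilon_n z_n = w_i(x_n-y_n) - z_i(x_n+y_n)$, which collapses to $-2y_n(x_i-y_i)$ for $i<n$ (using $w_i = z_i$ in that case) and to $w_n^2 - z_n^2 = -4x_n y_n$ for $i=n$. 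In either case the bracket is bounded by $Cy_n|z|$, so the boundary term contributes at most $Cx_n y_n^2/|z|^{n+1} \le Cx_n y_n^2/(|w|^{n-2}|z|^3)$.

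The main step to execute carefully is the algebraic identity for $w_iw_n + z_i\epsilon_n z_n$ and the bookkeeping that matches the competing powers of $|w|$ and $|z|$ against the target bound in each term, invoking $|w| \le |z|$ only where it actually helps. Once these are verified, the full statement follows directly from Lemma~\ref{th:Qs-est}, and the two-dimensional logarithmic correction carries through transparently via the extra term in $|Q_0|$.
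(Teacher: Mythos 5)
Your proposal is correct and follows essentially the same route as the paper's own proof: combine the explicit formula \eqref{Gij:formula} with the $Q_s$ estimates from Lemma~\ref{th:Qs-est}, use $|w|\le |z|$ and $\theta\le 1/4$ for the general bound, and exploit the algebraic cancellation in $w_iw_n + \epsilon_n z_iz_n$ (your case split $i<n$ versus $i=n$ is just an unpacked version of the paper's identity $w_iw_n - z_iz_n = -2\delta_{in}y_nx_n - (w_i+z_i)y_n$, both yielding the bound $\lesssim y_n|z|$). The paper groups the three terms into a single displayed inequality before specializing to $j=n$, but the underlying estimates are identical.
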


\begin{proof}
Denote $w=x-y$, $z=x-y^*$, and $\th = \frac {x_ny_n}{|z|^{2}}\in [0,\frac14]$.
By Theorem \ref{th2.2a} and Lemma \ref{th:Qs-est}, we have
\[
\label{eq2.51}
|G_{ij}(x,y)| \lec |w|^{2-n}\th^2 +  1_{n=2}
 \log(2+\frac {y_n}{|w|})+
\frac{\th}{|z|^n}  |w_iw_j+z_i\e_j z_j| ,%
\]
which gives \eqref{th2.2-1}.
In the case $j=n$,
\EQ{
w_iw_j+ \e_j z_i  z_j &= w_i (x_n-y_n) - z_i (x_n+y_n)
\\
& = (w_i -z_i)x_n - (w_i +z_i)y_n = -\de_{in} 2y_n x_n - (w_i +z_i)y_n,
}
which is bounded by $|z| y_n $. By this refined estimate and \eqref{eq2.51} we get
\eqref{th2.2-2}.
\end{proof}

{\it Remark.} To prove only \eqref{th2.2-1} without  \eqref{th2.2-2}, it suffices to use $|w|^2 = |z|^2(1+O(\th))$ instead of  \eqref{eq2.42} in the proof of Theorem
 \ref{th2.2a}, and we do not need \eqref{Gij:formula}.
\medskip

We next estimate derivatives of $G_{ij}(x,y)$.

\begin{theorem}
\label{th2.5b}
Fix $n \ge 2$.  Let $x,y \in \R^n_+$ and $i,j\in\{1,\ldots,n\}$. Let $\al$ and $\be$ be multi-indices with
$|\al|+|\be|=m>0$. Then
\[
\label{eq:th24-1}
\abs{\nb_x^\al \nb_y ^\be G_{ij}(x,y)}\le\frac{ C_m }{|x-y|^{n-2+m}}.
\]
If  $\al_n=\be_n=0$, we have
\[
\label{eq:th24-2}
\abs{\nb_x^\al \nb_y ^\be G_{ij}(x,y)}\le  \frac{ C_m x_ny_n}{|x-y|^{n-2+m}|z|^2},
\]
\[
\label{eq:th24-3}
\abs{\nb_x^\al \nb_y ^\be G_{in}(x,y)}\le  \frac{ C_m x_ny_n^2}{|x-y|^{n-2+m}|z|^3}.
\]
If  $\al_n=0$, we have
\[
\label{eq:th24-4}
\abs{\nb_x^\al \nb_y ^\be G_{ij}(x,y)}\le  \frac{ C_m x_n}{|x-y|^{n-2+m}|z|}.
\]
Above $C_m$ are independent of $x,y\in \R^n_+$.
\end{theorem}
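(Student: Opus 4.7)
The plan is to differentiate the explicit formula \eqref{Gij:formula} of Theorem \ref{th2.2a} term by term, estimating each piece via Lemma \ref{th:Qs-est} and direct computation. The baseline bound \eqref{eq:th24-1} is immediate: each of the three terms in \eqref{Gij:formula} has magnitude $\lesssim|w|^{2-n}$ (since $|Q_s|\lesssim|w|^{-s}$, $x_ny_n\le|z|^2$ and $|z|\ge|w|$), and each derivative costs at most a factor of $|w|^{-1}$.

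For \eqref{eq:th24-2} with $\alpha_n=\beta_n=0$, I would apply Lemma \ref{th:Qs-est} to the first two terms in \eqref{Gij:formula}. After writing $w_iw_j=w_n^a g(w')$ with $g$ horizontal of degree $2-a$, the lemma yields $\nabla_x^\alpha\nabla_y^\beta(w_iw_jQ_n)=\sum_k w_n^a f_k(w')Q_{n+2k}$ with $\deg f_k=2-a+2k-m$, each term bounded by $|w|^{-(n-2+m)}\theta^2$ via the lemma's estimate $|Q_s|\lesssim|w|^{-s}\theta^2$. Using $\theta^2\le\theta=x_ny_n/(4|z|^2)$ gives the claim. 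The third term preserves the $x_ny_n$ factor under tangential derivatives, and direct derivative estimates on the rational part give $\lesssim|z|^{-(n+m)}\le|w|^{-(n-2+m)}|z|^{-2}$. For the refinement \eqref{eq:th24-3} when $j=n$, the identity $|w_iw_n+\epsilon_n z_iz_n|\le Cy_n|z|$ from the proof of \eqref{th2.2-2} yields an extra $y_n/|z|$ factor in the third term, while the sharper bound $\theta^2\le\theta\cdot y_n/|z|$ (from $x_n\le|z|$) does the same for the first two.

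The main work is \eqref{eq:th24-4} when $\alpha_n=0$ but $\beta$ is arbitrary. The key intermediate claim is
\[
\bigl|\nabla_x^\alpha\nabla_y^\beta Q_s(x,y)\bigr|\le\frac{C_m\,x_n}{|w|^{s+m}|z|},\qquad\alpha_n=0,\ m=|\alpha|+|\beta|.
\]
Granted this, Leibniz's rule on \eqref{Gij:formula} yields \eqref{eq:th24-4}: the third term carries an explicit $x_n$ factor preserved by $\alpha_n=0$, and the claim handles the first two. The case $\beta_n=0$ of the claim is immediate from Lemma \ref{th:Qs-est} together with $x_n,y_n\le|z|$. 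For $\beta_n>0$ I would induct on $\beta_n$, starting from
\[
\partial_{y_n}Q_s=sw_nR_{s+2}+\frac{2s(s+2)\,x_ny_nz_n}{|z|^{s+4}},
\]
whose summands each retain an $x_n$ factor (the second explicitly, the first via $|R_{s+2}|\lesssim x_ny_n/(|w|^{s+2}|z|^2)$), together with the companion identity $\partial_{y_n}R_s=s\bigl[x_n(|w|^{-(s+2)}+|z|^{-(s+2)})-y_nR_{s+2}\bigr]$ whose $x_n$ factor arises from cancellation at $x_n=0$. Since $\alpha_n=0$ means no $x_n$-derivatives appear, the $x_n$ factor is never destroyed through the induction.

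The hard part is executing this induction cleanly: each iterated $y_n$-derivative produces several terms via Leibniz, and all must be verified to admit bounds of the shape $x_n/(|w|^{s+m}|z|)$. A cleaner alternative would use the integral representation
\[
Q_s=\int_0^{4x_ny_n}(4x_ny_n-u)\,\tfrac{s(s+2)}{4}(|z|^2-u)^{-s/2-2}\,du,
\]
which exhibits the $(x_ny_n)^2$ factor cleanly and permits differentiation under the integral sign, reducing everything to uniform estimates on $(|z|^2-u)^{-s/2-2-k}$ together with $|z|^2-u\ge|w|^2$.
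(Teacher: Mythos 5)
Your overall strategy — differentiate the compact formula \eqref{Gij:formula} term by term, applying Lemma \ref{th:Qs-est} to the $Q_s$ pieces and direct degree-counting to the explicit rational piece — is exactly what the paper does, and your treatment of \eqref{eq:th24-1}, \eqref{eq:th24-2}, \eqref{eq:th24-3} matches the paper's argument (modulo the small bookkeeping point that $w_n$ factors out of $w_iw_j$ since it is killed by tangential derivatives, which you and the paper both handle implicitly).

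The gap is in your plan for \eqref{eq:th24-4}. Your ``companion identity'' $\partial_{y_n}R_s = s x_n\bigl(|w|^{-s-2}+|z|^{-s-2}\bigr) - s y_n R_{s+2}$ is algebraically correct, but it is the wrong grouping for the induction: it splits the genuinely singular contributions into $s x_n |w|^{-s-2}$ (explicit $x_n$, but no small factor when $|w|\ll|z|$) and $-s y_n R_{s+2}$ (small, but via cancellation inside $R_{s+2}$), and naive triangle-inequality iteration throws away the cancellation between them. Concretely, after $k$ further $y_n$-derivatives the piece $s x_n\,\partial_{y_n}^k|w|^{-s-2}$ is only $O\bigl(x_n|w|^{-s-2-k}\bigr)$, whereas the target bound is $x_n|w|^{-s-1-k}|z|^{-1}$, and $|w|\le|z|$ means the former is strictly \emph{larger} in the regime $\theta$ near $1/4$ (i.e., $|z|\gg|w|$). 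The paper avoids this by grouping the identity as $\partial_{y_n}R_s = s(x_n-y_n)R_{s+2} + 2 s x_n|z|^{-s-2}$: the first piece keeps the difference $R_{s+2}$ intact (which already carries the $x_n y_n/(|w|^{s+2}|z|^2)$ smallness by the base case), and the second is nonsingular in $w$ with an explicit $x_n$. That regrouping is the crucial structural observation your proposal is missing. Your alternative via the Taylor-remainder integral $Q_s=\int_0^{4x_ny_n}(4x_ny_n-u)\tfrac{s(s+2)}{4}(|z|^2-u)^{-s/2-2}\,du$ is a correct and attractive representation, but differentiating under the integral sign and bounding $(|z|^2-u)^{-p}\le|w|^{-2p}$ on the whole range $[0,4x_ny_n]$ runs into the same loss when $|z|\gg|w|$: the upper limit equals $|z|^2-|w|^2$, so the integrand is concentrated near the endpoint and a crude sup-bound overcounts it, again losing the $1/|z|$.
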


\begin{proof}
Estimate \eqref{eq:th24-1} is well-known (see e.g.~\cite[\S IV.3]{Galdi}), and
follows from direct differentiation of
\eqref{Gij:formula}, no matter whether $n>2$ or $n=2$.

Suppose now $\al_n=\be_n=0$.
By \eqref{Gij:formula}, %
\EQ{
\nb_x^\al \nb_y ^\be G_{ij}(x,y)
& =\nb_x^\al \nb_y ^\be \bke{ {\de_{ij}} \ka Q_{n-2} + \frac1{2n\om_n} w_iw_j Q_n}
\\
&\quad
+ x_ny_n
\nb_x^\al \nb_y ^\be \frac{ (w_iw_j+z_i\e_j z_j)}{\om_n |z|^{n+2}}
=:
\text{I}+\text{II}.
}
By Lemma \ref{th:Qs-est},
\EQ{
\text{I}  = \sum_{k=0}^m \bke{ f_{ij,k} (z')  Q_{n-2+2k} + \tilde f_{ij,k}(z') Q_{n+2k}} ,
}
for some homogeneous polynomials $ f_{ij,k}(z') $ and $\td  f_{ij,k}(z') $ with  $\deg f_{ij,k}=2k-m $ and $\deg \td  f_{ij,k} =2+2k-m $. In particular $f_{ij,0}=0$. Thus, by $Q_s$ estimates in  \eqref{eq2.38} with $s>0$,
\[
 \label{eq2.58}
|\text{I}| \lec  \sum_{k=0}^m\bke{  |w|^{2k-m-(n-2+2k)} \th^2 +  |w|^{2+2k-m-(n+2k)} \th^2} \lec  |w|^{2-m-n} \th^2.
\]
For II, using
$w_i=z_i-2\de_{in}y_n$, we may rewrite
\EQ{
w_iw_j+z_i\e_j z_j &=(z_i-2\de_{in}y_n)(z_j-2\de_{jn}y_n)+z_i\e_j z_j
\\
&=(1+\e_j)z_i z_j -2(\de_{jn}z_i+\de_{in}z_j)y_n
+4\de_{jn}\de_{in}y_n^2.
}
Hence
\EQ{
\text{II} =
x_ny_n \nb_x^\al \nb_y ^\be \frac{(1+\e_j)z_i z_j }{\om_n |z|^{n+2}}
-2x_ny_n^2 \nb_x^\al \nb_y ^\be \frac{ \de_{jn}z_i+\de_{in}z_j}{\om_n |z|^{n+2}}
+x_ny_n^3  \nb_x^\al \nb_y ^\be \frac{ 4\de_{jn}\de_{in} }{\om_n |z|^{n+2}} .
}
The factors under differentiation are homogeneous rational functions of $z$ of degrees $-n$, $-n-1$, and $-n-2$, respectively.
After differentiation they become homogeneous rational functions of $z$ of degrees $-n-m$, $-n-1-m$, and $-n-2-m$, respectively. Thus
\EQ{
 \label{eq2.61}
|\text{II}| &\lec  (1+\e_j) x_n y_n   |z|^{-n-m} +x_n y_n ^2  |z|^{-n-1-m}+x_n y_n ^3  |z|^{-n-2-m}.
}

Summing \eqref{eq2.58} and  \eqref{eq2.61} and noting $(1+\e_j)=0$ if $j=n$, we get  both \eqref{eq:th24-2} and \eqref{eq:th24-3}.

It remains to show \eqref{eq:th24-4}. Using above
computations, we note that
\EQ{
\partial_{y_n}^{\be_n}\nb_x^\al \nb_{y'} ^{\be'} G_{ij}(x,y)= & \partial_{y_n}^{\be_n}
\sum_{k=0}^{m-\beta_n} \bke{ f_{ij,k} (z')  Q_{n-2+2k} + \tilde f_{ij,k}(z') Q_{n+2k}}
\\
&+ \partial_{y_n}^{\be_n}\bke{x_ny_n \nb_x^\al \nb_{y'} ^{\be'}
\frac{ (w_iw_j+z_i\e_j z_j)}{\om_n |z|^{n+2}}} =: J_1+J_2.
}
Since $J_2$ is nonsingular and has a factor $x_n$, it is rather
straightforward to obtain \eqref{eq:th24-4}, and thus it suffices to
treat $J_1$ only. In addition, since $f_{ij,k} (z')$ and $\tilde
f_{ij,k}(z')$ are independent of $y_n$-variable, we need to estimate
only $\partial_{y_n}^{\be_n}Q_s$ for either $s=n-2+2k$ or $s=n+2k$.
Recalling that $Q_s=R_s-\frac{2s x_ny_n}{ |z|^{s+2}}$, it is enough
to compute $\partial_{y_n}^{\be_n}R_s$, since the other term can be
treated as $J_2$. We will show via induction argument
that, for $s>0$,
\EQ{
\label{eq2.62}
|\partial_{y_n}^{\beta_n}R_s|\lesssim
\frac{x_n}{\abs{w}^{s+\beta_n}\abs{z}},\qquad \beta_n= 0, 1,\cdots }

The case $\beta_n=0$ follows from \eqref{eq2.38}.
Note
\EQ{
\partial_{y_n}R_s &= s(x_n-y_n)|w|^{-s-2}+
s(x_n+y_n)|z|^{-s-2}
\\
&=s(x_n-y_n)R_{s+2}+2sx_n|z|^{-s-2}.
}
Assume that \eqref{eq2.62} is valid up to
$\beta_n=k\ge 0$, and consider $\beta_n=k+1$:
\EQ{
\partial^{k+1}_{y_n}R_s
& =\partial^{k}_{y_n}\bkt{s(x_n-y_n)R_{s+2}+2sx_n|z|^{-s-2}}
\\
&=s(x_n-y_n)\partial^{k}_{y_n}R_{s+2}-ks\partial^{k-1}_{y_n}R_{s+2}+2sx_n\partial^{k}_{y_n}|z|^{-s-2}.
}
Hence, by induction assumption,
\EQ{
|\partial^{k+1}_{y_n}R_s|
&\lesssim
\frac{x_n(x_n-y_n)}{\abs{w}^{s+2+k}\abs{z}}+\frac{x_n}{\abs{w}^{s+k+1}\abs{z}}+\frac{x_n}{\abs{z}^{s+2+k}}
\lesssim \frac{x_n}{\abs{w}^{s+1+k}\abs{z}}.
}
We can now estimate $J_1$ using that $|f_{ij,k}|\lesssim \abs{z'}^{2k-m}$ and
$|\tilde f_{ij,k}|\lesssim \abs{z'}^{2+2k-m}$,
\[
|J_1|\lesssim\sum_{k=0}^{m-\beta_n}\bke{
\frac{\abs{z'}^{2k-m}x_n}{\abs{w}^{n-2+2k+\beta_n}\abs{z}}+\frac{\abs{z'}^{2+2k-m}x_n}{\abs{w}^{n+2k+\beta_n}\abs{z}}}
\lesssim \frac{x_n}{\abs{w}^{n-2+m+\beta_n}\abs{z}}.
\]
This completes the
proof.
\end{proof}

\begin{remark} When $\al_n\not=0$ or $\be_n\not=0$, we do not expect
\eqref{eq:th24-2} since $\pd_{x_n}$ or $\pd_{y_n}$ may kill a factor
of $x_n$ or $y_n$.  For example, consider the Green function for the
Laplace equation in $\R^3_+$,
\[
G(x,y) = \frac 1{4\pi|x-y|} - \frac 1{4\pi|x-y^*|}.
\]
We have
\[
4\pi\pd_{x_3}G(x,y)= x_3\bke{-\frac 1{|x-y|^3} + \frac 1{|x-y^*|^3}} +
y_3\bke{\frac 1{|x-y|^3} + \frac 1{|x-y^*|^3}}.
\]
When $y=e_3$ and $1\le x_3 \ll |x|$, the first term on the right side
is of order $\frac {x_3^2}{|x|^5}$ but the second term is of order
$\frac {1}{|x|^3}$. Thus $|\pd_{x_3}G(x,e_3)|\not \lesssim \frac
{x_3}{|x|^4}$.  However, \eqref{eq:th24-2} may be still valid if
$k<n$:
\[
4\pi\pd_{x_1}G(x,y)= (x_1-y_1)\bke{-\frac 1{|x-y|^3} + \frac
  1{|x-y^*|^3}},
\]
which is $O\bke{\frac {x_3}{|x|^4}}$ if $y_3=1\le x_3 $.
\end{remark}

The following lemma will be used in the proof of Theorem \ref{stokes-100}.

\begin{lemma}
\label{th2.7}
For $n \ge 2$, we have
\begin{equation}\label{DGy0}
D_{y_l}G_{ij}(x,0) = \left\{
\begin{split}
& \frac{2 x_n x_i
x_j}{ \omega_n |x|^{n+2}} = K_{ij}(x),\qquad &\mbox{if }\,\, j<n=l,
\\
 & 0,\qquad &\text{otherwise}.
\end{split}
\right .
\end{equation}
\end{lemma}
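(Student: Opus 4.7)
The plan is to compute $\partial_{y_l} G_{ij}(x,y)\big|_{y=0}$ term-by-term from the explicit formula \eqref{Gij:formula}:
\[
G_{ij}(x,y) = \delta_{ij}\kappa\, Q_{n-2} + \tfrac{1}{2n\omega_n} w_i w_j\, Q_n + \tfrac{x_n y_n(w_iw_j + z_i\varepsilon_j z_j)}{\omega_n |z|^{n+2}}.
\]
The key reduction is that at $y=0$ we have $w=z=x$, $x_n y_n = 0$, and more importantly every $Q_s$ vanishes along with its first $y$-derivatives, so only the last ``boundary-layer'' piece can survive.

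First I would record the identities $\partial_{y_l} w_j = -\delta_{jl}$, $\partial_{y_l} z_j = -\varepsilon_l \delta_{jl}$, and $\partial_{y_l}|w|^{-s} = s w_l/|w|^{s+2}$, $\partial_{y_l}|z|^{-s} = s\varepsilon_l z_l/|z|^{s+2}$. A direct computation then gives, for every $s>0$,
\[
\partial_{y_l} Q_s\big|_{y=0} \;=\; \tfrac{s x_l}{|x|^{s+2}} \;-\; \tfrac{s \varepsilon_l x_l}{|x|^{s+2}} \;-\; \tfrac{2s x_n \delta_{ln}}{|x|^{s+2}},
\]
which vanishes in both cases $l<n$ (since $\varepsilon_l=1$, $\delta_{ln}=0$) and $l=n$ (since $\varepsilon_n=-1$ and the three contributions cancel). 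An entirely analogous calculation handles $Q_0$, needed when $n=2$. Because $Q_{n-2}(x,0)=Q_n(x,0)=0$ as well, the product rule applied to the first two terms of \eqref{Gij:formula} yields zero at $y=0$, regardless of $i,j,l$.

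For the third term, write $A(x,y) = w_iw_j+z_i\varepsilon_j z_j$ and note
\[
\partial_{y_l}\!\left(\tfrac{x_n y_n A}{\omega_n |z|^{n+2}}\right)\bigg|_{y=0} = \tfrac{x_n\,\delta_{ln}\,A(x,0)}{\omega_n |x|^{n+2}},
\]
since the other terms produced by the product rule all carry a surviving factor of $y_n$. Using $A(x,0) = (1+\varepsilon_j)x_ix_j$, this contributes $0$ unless $l=n$ and $j<n$, in which case $1+\varepsilon_j=2$ and we recover $\tfrac{2x_n x_i x_j}{\omega_n |x|^{n+2}} = K_{ij}(x)$. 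When $l=n$ and $j=n$ we have $1+\varepsilon_n=0$, giving zero, and when $l<n$ the factor $\delta_{ln}$ kills the term. This exhausts all cases and establishes \eqref{DGy0}.

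The only part requiring care is the verification that $\partial_{y_l}Q_s|_{y=0}=0$ uniformly in $l$; the cancellation between the $|w|^{-s}$, $|z|^{-s}$, and $2sx_ny_n|z|^{-s-2}$ derivatives is precisely what makes the claim clean, and it is the one spot where one must keep track of the sign $\varepsilon_l$ and the Kronecker $\delta_{ln}$ simultaneously. Once this is in hand, the remainder is a short symbolic computation.
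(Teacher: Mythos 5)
Your proof is correct and follows essentially the same route as the paper: compute $D_{y_l}G_{ij}(x,0)$ from formula \eqref{Gij:formula}, observe that $Q_s$ and its first $y$-derivatives vanish at $y_n=0$ so the first two terms drop out, and read off $K_{ij}$ from the boundary-layer term via $A(x,0)=(1+\varepsilon_j)x_ix_j$. The only cosmetic difference is that you verify $\partial_{y_l}Q_s|_{y=0}=0$ by a direct computation covering $l=n$ as well, whereas the paper invokes Lemma \ref{th:Qs-est}\eqref{eq2.40} (which as stated only covers $\beta_n=0$), so your version is arguably a touch more careful on that point.
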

\begin{proof}
Recall formula \eqref{Gij:formula} for $G_{ij}(x,y)$ in Theorem \ref{th2.2a}. Note that $Q_s|_{y_n=0}=0$ for all $s \ge 0$. By \eqref{eq2.40} of Lemma \ref{th:Qs-est}, we get 
\[
\nabla^\beta_y Q_s |_{y_n=0} = 0, \quad \forall s\ge0 , \forall \beta.
\]
Therefore, when one computes $D_{y_l}G_{ij}(x,0)$ using \eqref{Gij:formula}, the first two terms have no contribution and
\[
D_{y_l}G_{ij}(x,0) =0+0+ \frac {x_n  \de_{ln} (1+\e_j)x_ix_j}{\om_n |x|^{n+2}},
\]
which shows the lemma.
\end{proof}
\cmtm{\emph{Remark.} It seems interesting to show the lemma by definitions, not using formula \eqref{Gij:formula}. Compare the derivation of the Poisson kernel for the Laplace equation from its Green function.}
\section{Asymptotics of flows in the half space} \label{sec3}

In this section, we study the spatial asymptotics of stationary
solutions of the incompressible Stokes and Navier-Stokes equations in the half
space $\R^n_+$. %

We first consider the Stokes system in the half-space,%
\begin{equation}\label{stokes-KMT-10}
-\Delta v+\nabla p=f+\nabla\cdot F, \qquad{\rm div}\,v=0\qquad
\mbox{in }\,\,\R^n_+,
\end{equation}
\begin{equation}\label{stokes-KMT-20}
v=0\qquad \mbox{on }\,\,\partial\R^n_+=\{x_n=0\}.
\end{equation}
Above $(\nabla\cdot F)_i = \pd_j F_{ji}$. \cmtm{A \emph{weak solution} of \eqref{stokes-KMT-10}-\eqref{stokes-KMT-20} is a vector field $v \in  W^{1,2}_{loc}(\overline {\R^n_+})$ that satisfied the weak form of \eqref{stokes-KMT-10} with divergence-free test functions, and \eqref{stokes-KMT-20} in the sense of trace, with no assumption on its global integrability in this section.

The following uniqueness result can be found in e.g.~\cite[Corollary 3.7]{MR2027755}.
\begin{lemma}[Uniqueness in $\R^n_+$] \label{th:uniqueness} 
Let $v \in W^{1,2}_{loc}(\overline {\R^n_+})$, $n \ge 2$, be a weak solution of the Stokes system \eqref{stokes-KMT-10}-\eqref{stokes-KMT-20} with zero force. If $v(x)=o(|x|)$ as $|x|\to \infty$, then $v \equiv 0$.
\end{lemma}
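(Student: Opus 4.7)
The plan is to upgrade $v$ to a smooth solution via boundary regularity and then run a scaling argument that converts the sublinear growth into a pointwise gradient bound of zero. First, since $v\in W^{1,2}_{loc}(\overline{\R^n_+})$ is a weak solution of the homogeneous Stokes system with zero Dirichlet data on the flat boundary, standard interior-and-boundary elliptic theory for the Stokes system promotes $v$ to $C^\infty(\overline{\R^n_+})$, together with an associated smooth pressure $p$ (unique up to an additive constant) such that $-\Delta v+\nabla p=0$ and $\div v=0$ hold classically in $\R^n_+$, with $v=0$ pointwise on $\partial\R^n_+$.

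The key technical step is the up-to-boundary gradient estimate: any $w$ solving the homogeneous Stokes system in the half-ball $B_1^+=B_1\cap\R^n_+$ with $w=0$ on $B_1\cap\{x_n=0\}$ satisfies
\[
\|\nabla w\|_{L^\infty(B_{1/2}^+)}\ \lec\ \|w\|_{L^\infty(B_1^+)}.
\]
I would derive this by multiplying $w$ by a cutoff $\chi\equiv 1$ on $B_{3/4}$ supported in $B_1$, correcting the divergence of $\chi w$ via a Bogovski\u{\i} solution $\phi$ supported in the annular half-region $(B_1\setminus B_{3/4})\cap\R^n_+$, and representing $\chi w+\phi$ through the half-space Green tensor $G_{ij}$ of Section~\ref{sec2}. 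Because the boundary data vanish, the representation reduces to a convolution of $G_{ij}$ against a forcing supported in the annulus, away from $B_{1/2}^+$; differentiating once and applying the refined gradient bound \eqref{eq:th24-4} of Theorem~\ref{th2.5b} (which retains the natural $x_n$ factor near the boundary) yields the desired estimate after combining with the Caccioppoli bound $\|\nabla w\|_{L^2(B_{7/8}^+)}+\|q\|_{L^2(B_{7/8}^+)}\lec\|w\|_{L^\infty(B_1^+)}$. This is where the main difficulty lies: the interior analogue is routine, but near the flat boundary one must avoid losing the $x_n$ factor that distinguishes the half-space theory from the whole-space one, and the case $\al_n=0$ of Theorem~\ref{th2.5b} is tailor-made for this.

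With the Schauder estimate in hand, the Liouville conclusion is immediate by scaling. Fix $y_0\in\R^n_+$; for $R>2|y_0|$, set $v_R(x):=R^{-1}v(Rx)$, which again solves the homogeneous Stokes system in $\R^n_+$ with zero Dirichlet data. The hypothesis $v(x)=o(|x|)$ gives
\[
\|v_R\|_{L^\infty(B_1^+)}\ =\ R^{-1}\sup_{B_R^+}|v|\ \to\ 0 \qquad (R\to\infty).
\]
Applying the Schauder estimate to $v_R$ at the point $y_0/R\in B_{1/2}^+$ and using $\nabla v_R(y_0/R)=\nabla v(y_0)$ yields
\[
|\nabla v(y_0)|\ =\ |\nabla v_R(y_0/R)|\ \lec\ \|v_R\|_{L^\infty(B_1^+)}\ \to\ 0.
\]
Hence $\nabla v(y_0)=0$, and since $y_0\in\R^n_+$ was arbitrary, $\nabla v\equiv 0$ on the connected set $\R^n_+$. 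The boundary condition $v|_{\partial\R^n_+}=0$ then forces $v\equiv 0$.
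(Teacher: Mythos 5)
The paper does not actually prove Lemma~\ref{th:uniqueness}; it cites \cite[Corollary 3.7]{MR2027755} and moves on. So there is no in-house proof to compare against, and your scaling/Liouville argument is a genuine alternative. It is in fact the exact half-space analogue of the argument the paper \emph{does} give for the whole-space counterpart, Lemma~\ref{th:uniquenessRn}: there the pressure-independent estimates of \cite{Sverak-Tsai} and bootstrapping yield $\norm{\nb v}_{L^\infty(B_R)} \le \frac CR \norm{ v}_{L^\infty(B_{2R})}$, and letting $R\to\infty$ kills $\nb v$. Your proof is the same skeleton with a half-ball boundary Schauder estimate in place of the interior one.

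Two remarks on the details. First, to justify the Green-tensor representation of $\chi w+\phi$ you need \emph{some} uniqueness statement, which at first glance looks circular; it is not, because $\chi w+\phi$ and the forcing are compactly supported, and uniqueness in that class follows from the energy identity without any growth hypothesis. That step is worth saying explicitly. Second, invoking \eqref{eq:th24-4} and emphasizing the retained $x_n$ factor is unnecessary for this lemma: the target point lies in $B_{1/2}^+$ while the forcing sits in the annulus, so $|x-y|\ge 1/4$ and the crude bound \eqref{eq:th24-1} already gives $|\nb_x G_{ij}(x,y)|\lec 1$. The $x_n$-weighted estimates buy you the refined decay of Section~\ref{sec3}, not the Liouville conclusion; the full gradient $\nb w$ includes $\pd_{x_n}w$, for which $\al_n\ne 0$ and \eqref{eq:th24-4} would not apply anyway. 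More economically, you could skip the Green-tensor route entirely and cite the same local boundary regularity theory the paper uses elsewhere (e.g.\ \cite{Sverak-Tsai} and \cite[Theorem 3.4, Theorem 3.8]{MR2027755}) to get $\norm{\nb w}_{L^\infty(B_{1/2}^+)}\lec\norm{w}_{L^\infty(B_1^+)}$; the rest of your argument then goes through verbatim. In short: the proof is correct, the route is legitimate, and it is arguably more self-contained than the paper's bare citation, but the appeal to the anisotropic Green-tensor bounds is overkill.
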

}

The following two lemmas show that we can absorb $f $ into $\nb \cdot F$.

\begin{lemma}[\cite{KMT12} Lemma 2.5]
\label{th:f.dec}
   If $f(x)$ is defined in $\R^n$ with $|f(x)| \lec
\bka{x}^{-a}$, $a>n\ge 1$, then for any $R>0$ we can rewrite
\begin{equation}
\label{f.dec} f(x) = f_0(x) + \sum_{j=1}^n \pd_j F_j(x)
\end{equation}
where $\supp f_0\subset B_R(0)$ and $|F_j(x)| \lec
\bka{x}^{-a+1}\norm{\bka{x}^a f(x)}_{L^\I}$.
\end{lemma}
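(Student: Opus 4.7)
The plan is to split $f$ into a compactly supported piece $f_0$ and a far-field piece, write the far-field piece as a divergence via an explicit radial antidivergence formula, and then use a secondary cutoff to tame the singularity of that potential near the origin.

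Choose radial cutoffs $\chi, \rho \in C^\infty(\R^n;[0,1])$ whose relevant radii are small multiples of $R$, with $\chi \equiv 1$ near $0$ and $\supp \chi \subset B_R$, and with $\rho \equiv 0$ on a ball strictly inside $\{\chi = 1\}$ but $\rho \equiv 1$ outside $\supp \chi$. Set $g := (1-\chi)f$, supported away from $0$ and still obeying $|g| \lec \bka{x}^{-a}$. The key step is to invert the divergence on $g$ using
\[
H_j(x) := -x_j \int_1^\infty g(tx)\,t^{n-1}\,dt, \qquad j = 1,\dots,n.
\]
The integral converges because on the support of $g(t\,\cdot)$ one has $t|x| \gec R$ and $|g(tx)| \lec (t|x|)^{-a}$ with $a > n$. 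Integration by parts in $t$, using $\sum_j x_j (\pd_j g)(tx) = \tfrac{d}{dt} g(tx)$, produces $\sum_j \pd_j H_j = g$ directly. The substitution $s = t|x|$ together with the elementary estimate $\int_{|x|}^\infty \bka{s}^{-a} s^{n-1}\,ds \lec \bka{x}^{n-a}$ (using $a>n$) then gives the far-field bound $|H(x)| \lec \bka{x}^{1-a}\norm{\bka{x}^a f}_{L^\infty}$ whenever $|x|$ is bounded away from $0$; close to $0$ the same computation only yields $|H(x)| \lec |x|^{1-n}$, which is unbounded for $n \ge 2$.

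To eliminate that singularity, set $F_j := \rho H_j$, so $F \equiv 0$ near $0$. A direct computation gives $\sum_j \pd_j F_j = \rho g + (\nabla \rho) \cdot H$, and rearranging $f = \chi f + g$ yields
\[
f = \bigl[\chi f + (1-\rho) g - (\nabla\rho)\cdot H\bigr] + \sum_j \pd_j F_j =: f_0 + \sum_j \pd_j F_j.
\]
Each term in $f_0$ has compact support---$\chi f$ in $\supp \chi$, $(1-\rho) g$ in $\{\rho < 1\}$, and $(\nabla\rho)\cdot H$ in the annulus $\{\nabla\rho \neq 0\}$---so for suitable choice of cutoff radii we obtain $\supp f_0 \subset B_R$. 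The desired bound $|F_j(x)| \lec \bka{x}^{-a+1}\norm{\bka{x}^a f}_{L^\infty}$ is immediate from $F_j \equiv 0$ near $0$ together with the bound on $H_j$ on the remaining region.

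The only delicate point is the singularity of the naive antidivergence $H$ at the origin in dimensions $n\ge 2$; this is precisely what motivates the secondary cutoff $\rho$ and the absorption of $(\nabla\rho)\cdot H$ into the compactly supported remainder $f_0$. Everything else is a routine calculation.
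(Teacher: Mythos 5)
Your proof is correct, and the radial antidivergence formula $H_j(x) = -x_j\int_1^\infty g(tx)\,t^{n-1}\,dt$ with a secondary cutoff to kill the $|x|^{1-n}$ singularity at the origin is the standard way to prove this kind of statement. Note that the paper at hand does not prove this lemma itself --- it simply cites it from \cite{KMT12} --- so there is no in-paper proof to compare against; your argument is a complete, self-contained derivation.

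One small point worth tightening: your integration by parts shows $\sum_j \pd_j H_j = g$ pointwise for $x \ne 0$, but in the sense of distributions one actually has $\div H = g - \bke{\int_{\R^n} g}\,\de_0$, since near the origin $H$ behaves like $-\dfrac{x}{|x|^n}\,G(\hat x)$ with $G(\hat x) = \int_0^\infty g(s\hat x)\,s^{n-1}\,ds$, a kernel of the fundamental-solution type. Your secondary cutoff $\rho$, which vanishes on a neighborhood of $0$, silently disposes of this delta term (since $\rho\,\de_0 = 0$), so the final identity $\div F = \rho g + (\nabla\rho)\cdot H$ is correct both pointwise and distributionally. It would be cleaner to acknowledge this so that a reader does not wonder whether $\div H = g$ is being claimed as a distributional identity on all of $\R^n$. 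Everything else --- the support bookkeeping for $f_0$, the substitution $s = t|x|$ and the resulting bound $|H(x)|\lesssim \bka{x}^{1-a}\norm{\bka{x}^a f}_{L^\infty}$ away from the origin, and the observation that the $R$-dependence of the implicit constant is allowed --- is sound.
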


If we are concerned with the half space, 
the term $f_0$ can be removed.

\begin{lemma}\label{th:f.dec+}
   If $g(x)$ is defined in $\R^n_+$ with $|g(x)| \lec
\bka{x}^{-a}$, $a>n\ge 1$, then we can rewrite
\begin{equation}
\label{g.dec} g(x) = \sum_{j=1}^n \pd_j G_j(x),\quad (x\in\R^n_+),
\end{equation}
where $|G_j(x)| \lec \bka{x}^{-a+1}\norm{\bka{x}^a g(x)}_{L^\I}$.
\end{lemma}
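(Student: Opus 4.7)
My plan is to exploit the fact that on $\R^n_+$ there is a distinguished direction, namely $x_n$, along which we can integrate to infinity. This lets us realize $g$ as a pure divergence of a single component, avoiding the need for a compactly supported remainder of the type $f_0$ that appeared in Lemma \ref{th:f.dec}.

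Concretely, I will set
\[
G_j(x) = 0 \ \text{for}\ j<n, \qquad G_n(x) = -\int_{x_n}^{\infty} g(x',s)\,ds,
\]
which is well defined for each $x\in\R^n_+$ since $a>n\ge 1$ guarantees the $s$-integral converges absolutely. The identity $\sum_{j=1}^n \partial_j G_j = \partial_n G_n = g$ then follows from the fundamental theorem of calculus. The remaining and only substantive task is the pointwise bound
\[
|G_n(x)| \ \lesssim\ \bka{x}^{1-a}\,\|\bka{\cdot}^a g\|_{L^\infty(\R^n_+)}.
\]

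To prove this bound, set $M=\|\bka{\cdot}^a g\|_{L^\infty}$ and $r=\sqrt{1+|x'|^2}$, and use $|g(x',s)|\le M(r^2+s^2)^{-a/2}$. After the substitution $s=rt$ one has
\[
|G_n(x)| \ \le\ M\, r^{1-a} \int_{x_n/r}^{\infty} (1+t^2)^{-a/2}\,dt.
\]
When $x_n\le r$, the integral is bounded by the $a>1$ convergent integral $\int_0^\infty(1+t^2)^{-a/2}dt$, giving $|G_n|\lesssim M r^{1-a}\sim M\bka{x}^{1-a}$ because $\bka{x}\sim r$ in this regime. When $x_n>r$, the tail estimate $\int_{x_n/r}^\infty(1+t^2)^{-a/2}dt \lesssim (x_n/r)^{1-a}$ yields $|G_n|\lesssim M x_n^{1-a}$, and the inequality $x_n\ge \bka{x}/\sqrt{2}$ in this regime (since $\bka{x}^2\le 2x_n^2$) combined with $1-a<0$ again gives $|G_n|\lesssim M\bka{x}^{1-a}$.

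I do not foresee any genuine obstacle here: the construction is essentially explicit, and the only care needed is in the case analysis $x_n\lessgtr r$ of the last paragraph, which is elementary. The structural point worth highlighting is that this is where the half-space geometry is actually used — in the whole space one cannot integrate $g$ along a ray to $\infty$ without encountering a compact remainder, which is precisely why Lemma \ref{th:f.dec} had to retain the $f_0$ term.
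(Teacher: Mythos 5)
Your proof is correct and takes a genuinely different, and in fact more elementary, route than the paper. The paper proves Lemma \ref{th:f.dec+} by reduction to the whole-space decomposition Lemma \ref{th:f.dec}: it extends $g$ to $\R^n$ by setting $f(x)=g(x-e_n)$ for $x_n>1$ and $f=0$ elsewhere, invokes Lemma \ref{th:f.dec} with $R$ small enough that the compactly supported remainder $f_0$ lies in $B_{1/2}(0)$, and then shifts back so that the $f_0$ term lands entirely outside $\R^n_+$ and can be discarded. Your construction bypasses Lemma \ref{th:f.dec} altogether by integrating along the $x_n$-direction: set $G_j\equiv 0$ for $j<n$ and $G_n(x)=-\int_{x_n}^\infty g(x',s)\,ds$, which is manifestly a single-component divergence representation and whose decay you verify directly with a scaling substitution and a two-regime comparison of $x_n$ against $r=\sqrt{1+|x'|^2}$. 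The estimate goes through: in the regime $x_n\le r$ one has $\bka{x}\sim r$ and the $t$-integral is $O(1)$ since $a>n\ge 1$ forces $a>1$; in the regime $x_n>r\ge 1$ the tail bound gives $x_n^{1-a}$ and $\bka{x}\lesssim x_n$ closes the estimate. The trade-offs are mild: the paper's argument is shorter given that Lemma \ref{th:f.dec} is already available and has the feature of being a pure "transport the whole-space result" argument, while yours is self-contained, produces a structurally simpler $G$ (only one nonzero component), and makes transparent exactly how the half-space geometry removes the obstruction to a global divergence form.
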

\begin{proof}
Let
\begin{equation}
f(x) = \left\{
\begin{split}
&g(x-e_n),\quad (x_n>1);
\\
&0,\quad (x_n\le 1).
\end{split}
\right.
\end{equation}
By Lemma \ref{th:f.dec}, we can decompose $f(x)$ as in \eqref{f.dec}
with $\supp f_0\in B_{1/2}(0)$. Let $G_j(x)=F_j(x+e_n)$ and we get
\eqref{g.dec} with the desired decay estimate.
\end{proof}

If the external forces $f$ and $F$ decay sufficiently fast, then
bounded solutions of \eqref{stokes-KMT-10}-\eqref{stokes-KMT-20}
have spatial asymptotics of order $-n+1$. To be more precise, we
have the following:

\begin{theorem}[Asymptotics of Stokes system] \label{stokes-100}
Let $n \ge  2$ and $a>n+1$. Suppose that $v$ is a weak
solution of the Stokes system
\eqref{stokes-KMT-10}-\eqref{stokes-KMT-20}  in $\R^n_+$ with
$\abs{v(x)}\le o(\abs{x})$ as $|x|\to \I$. Assume further that
$\abs{f(x)}\lesssim \langle x \rangle^{-a}$ and $\abs{F(x)}\lesssim
\langle x \rangle^{-a+1}$. Then, $\abs{v(x)}\lesssim \frac {x_n}{
\bka{x}^n} \lec \langle x \rangle^{-n+1}$ and for sufficiently large
$x$,
\[
\label{eq3.6}
v_i(x)=\sum_{j=1}^{n}K_{ij}(x)b_j+O(\de(x)),
\quad i=1,\cdots,n,
\]
where
\[
\label{eq3.12}
b_n=0,\quad b_j=\int_{\R^n_+}(y_n f_j(y) - F_{nj}(y))dy, \quad
(j<n),
\]
\[
\de(x) =\frac {x_n}{\bka{x}^{\min(n+1,a-1)}} (1+1_{a=n+2} \log \bka{x}) ,
\]
and $K_{ij}(x)=\frac{2 x_n x_i x_j}{ \omega_n |x|^{n+2}}$ is the Poisson kernel
for the Stokes system in the half space.
\end{theorem}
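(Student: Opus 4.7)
The plan is to realize $v$ as a Green tensor convolution of the force and then to perform a first-order Taylor expansion of $G_{ij}(x,\cdot)$ at $y=0$, reading off the leading profile from Lemma \ref{th2.7}. For the representation step, I would introduce
\[
\tilde v_i(x):=\int_{\R^n_+}G_{ij}(x,y)f_j(y)\,dy-\int_{\R^n_+}\partial_{y_k}G_{ij}(x,y)F_{kj}(y)\,dy;
\]
no boundary term is produced by the integration by parts because $G_{ij}(x,y)|_{y_n=0}=0$ (via reciprocity (Lemma \ref{th2.1}) and the boundary condition \eqref{eq2.12}), so $\tilde v$ is a weak solution of \eqref{stokes-KMT-10}--\eqref{stokes-KMT-20}. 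Using Theorems \ref{th2.2} and \ref{th2.5b}, whose estimates always carry an $x_n$ factor when $G$ is not differentiated in $x_n$, I would verify $|\tilde v(x)|=o(|x|)$; Lemma \ref{th:uniqueness} then forces $v=\tilde v$. (Lemma \ref{th:f.dec+} could be used first to absorb $f$ into $\nabla\cdot F$, if more convenient.)

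For the profile extraction, the formula \eqref{Gij:formula} (or reciprocity plus the boundary condition) yields $G_{ij}(x,0)=0$, while Lemma \ref{th2.7} gives $\partial_{y_l}G_{ij}(x,0)=\de_{ln}\,1_{j<n}\,K_{ij}(x)$. Hence the first-order Taylor polynomial of $G_{ij}(x,\cdot)$ at $y=0$ is $y_n K_{ij}(x)\,1_{j<n}$, and that of $\partial_{y_k}G_{ij}(x,\cdot)$ is $\de_{kn}\,1_{j<n}\,K_{ij}(x)$. Substituting these principal parts into $\tilde v_i$ and summing in $j$ and $k$ will produce exactly
\[
\sum_{j<n}K_{ij}(x)\int_{\R^n_+}\bigl(y_n f_j(y)-F_{nj}(y)\bigr)\,dy=\sum_{j=1}^{n}K_{ij}(x)b_j,
\]
which matches \eqref{eq3.6}--\eqref{eq3.12}.

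For the remainder estimates I would split $\R^n_+$ into the near zone $|y|\le|x|/2$ and the far zone $|y|>|x|/2$. In the near zone, $|x-ty|\sim|x|\sim|x-(ty)^*|$ for $0\le t\le 1$, and \eqref{eq:th24-4} (with $\al=0$, $|\be|=2$) yields $|\nb_y^2 G_{ij}(x,ty)|\lesssim x_n/|x|^{n+1}$; the Taylor integral formula then bounds the remainder contribution by $\tfrac{x_n}{|x|^{n+1}}\int_{|y|\le|x|/2}\bigl(|y|^2\bka{y}^{-a}+|y|\bka{y}^{-a+1}\bigr)dy$, which evaluates to $\de(x)$ after splitting into the three regimes $a>n+2$, $a=n+2$, and $n+1<a<n+2$ (the logarithm appearing in the critical case). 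In the far zone, Theorems \ref{th2.2} and \ref{th2.5b} supply $|G_{ij}(x,y)|\lesssim x_ny_n/|y|^n$ and $|\nb_y G_{ij}(x,y)|\lesssim x_n/|y|^n$ for $|y|\gtrsim|x|$; together with the compensating tails $-K_{ij}(x)\int_{|y|>|x|/2}(y_n f_j-F_{nj})\,dy$ that must be subtracted to complete the $\R^n_+$ integrals defining $b_j$, these contribute $O(x_n|x|^{1-a})$, which is absorbed into $\de(x)$ since $a>n+1$. The pointwise bound $|v(x)|\lesssim x_n/\bka{x}^n$ then follows from the asymptotic formula combined with $|K_{ij}(x)|\lesssim x_n/|x|^n$.

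The main obstacle will be the representation step: one must carefully harvest the full $x_n$ factor from Theorems \ref{th2.2}--\ref{th2.5b} in order to verify $|\tilde v(x)|=o(|x|)$ before Lemma \ref{th:uniqueness} can be applied. Once the representation is secured, the profile identification is immediate from Lemma \ref{th2.7}, and the remainder bounds reduce to a routine (if case-heavy) integration driven by the sharp derivative estimate \eqref{eq:th24-4}.
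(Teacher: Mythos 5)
Your proposal is correct and follows essentially the same route as the paper: representation of $v$ by the Green tensor via the uniqueness Lemma \ref{th:uniqueness}, profile extraction via Lemma \ref{th2.7}, and near/far splitting estimated with the sharp derivative bound \eqref{eq:th24-4} and the integral estimate \eqref{eq3.15}. The only cosmetic difference is that the paper first absorbs $f$ into $\nabla\cdot F$ via Lemma \ref{th:f.dec+} and then works with a single $\nabla_y G\ast F$ term, whereas you keep $f$ and $F$ separate and Taylor-expand $G$ to second order and $\nabla_y G$ to first order; both yield the same leading coefficients $b_j$ and the same remainder order.
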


{\it Remark.} 
Note $\de(x) = o( \frac {x_n}{\bka{x}^n})$ as $|x| \to \infty$.
The asymptotic in \eqref{eq3.6} is spanned by the $n-1$ vectors $\{\vec K_j : 1 \le j \le n-1\}$, 
where $(\vec K_j)_i = K_{ij}$. That $\vec K_n$ is not present is because a solution of \eqref{stokes-KMT-10}-\eqref{stokes-KMT-20} should have zero flux on any hemisphere $S_R^+= \bket{ x\in \R^n_+, |x|=R}$, while $\vec K_n$  has nonzero flux. Note that the flux of the error term of \eqref{eq3.6} on $S_R^+$ vanishes as $R \to \I$.

\begin{proof}
By Lemma \ref{th:f.dec+}, we may write $f_j  =\pd_i \td F_{i j}$ where $|\td F_{i j}(x)|\lec \bka{x}^{-a-1} $. We have
\[
\int_{\R^n_+} y_n f_j(y) dy = \int_{\R^n_+} y_n \pd_i \td F_{i j}(y) dy= - \int_{\R^n_+} \td F_{n j}.
\]
By absorbing $\td F$ into $F$, we may assume $f=0$.

By uniqueness Lemma \ref{th:uniqueness}, we have
the representation formula,
\EQ{\label{eq3.10}
v_i(x) &=-\sum_{j=1}^n\sum_{\alpha=1}^n\int_{\R^n_+}\partial_{y_{\alpha}}G_{ij}(x,y)F_{\alpha
j}(y)dy =I_1+I_2,
}
where
\EQ{
I_1&:=
-\sum_{j=1}^n\sum_{\alpha=1}^n\int_{\R^n_+}\partial_{y_{\alpha}}G_{ij}(x,0)F_{\alpha
j}(y)dy,
\\
I_2&:= -\sum_{j=1}^n\sum_{\alpha=1}^n\int_{\R^n_+}\bke{\partial_{y_{\alpha}}G_{ij}(x,y)
-\partial_{y_{\alpha}}G_{ij}(x,0)}F_{\alpha j}(y)dy.
}

We first compute $I_1$. By \eqref{DGy0} in Lemma \ref{th2.7}, the summand in $I_1$ is nonzero only if $j<n=\alpha$ and
\EQ{
I_1 &
=- \sum_{j=1}^{n-1} \frac{2 x_n x_i x_j}{\omega_n |x|^{n+2}}\int_{\R^n_+}
F_{n j}(y)dy
= \sum_{j=1}^{n-1}  K_{ij}(x) b_j.
}

Secondly, we estimate $I_2$. We may assume $|x|>10$. For notational convenience, for
given $x$ we denote $A_x=\{y\in\R^n_+: \abs{y}\le
\frac{\abs{x}}{2}\}$ and $B_x=\R^n_+\setminus A_x$.
\EQ{
\label{eq3.13}
I_2 &=-\sum_{j=1}^n\sum_{\alpha=1}^n\int_{\R^n_+}\bke{\partial_{y_{\alpha}}G_{ij}(x,y)
-\partial_{y_{\alpha}}G_{ij}(x,0)}F_{\alpha j}(y)dy
\\
&=\int_{A_x}\cdots dy+\int_{B_x}\cdots dy:=J_1+J_2.
}
By \eqref{eq:th24-4} of Theorem \ref{th2.5b},
\EQ{
|J_1| &\le C\int_{\abs{y}\le \frac{\abs{x}}{2}}
 \sup_{\abs{\td y}\le \frac{\abs{x}}{2}}
\abs{\nabla^2_ y G_{ij}(x,\td y)
}\abs{yF_{\alpha j}(y)}dy
\\
&\le \frac{C x_n}{\abs{x}^{n+1}}\int_{\abs{y}\le
\frac{\abs{x}}{2}} %
\bka{y}^{2-a} dy 
\le C\de(x).
}
Above we have used that, for $m\ge 0$ and $R>0$,
\begin{equation}
\label{eq3.15}
\int_{|y|\le R} \bka{y}^{-m}dy \lec
 \left \{
\begin{split}
1 \quad & \text{if } m>n,
\\
\log \bka{R}\quad & \text{if } m=n,
\\
\bka{R}^{n-m} \quad & \text{if } 0\le m<n,
\end{split}
\right \}
\approx
 \bka{R}^{(n-m)_+} (1+1_{m=n} \log \bka{R}),
\end{equation}
with $m=a-2$. Recall $(r)_+= \max(r,0)$.
For $J_2$,
\EQ{
|J_2| &\le
\sum_{j=1}^n\sum_{\alpha=1}^n\int_{\frac{\abs{x}}{2}<\abs{y}}\bke{\abs{\partial_{y_{\alpha}}G_{ij}(x,y)}+
\abs{\partial_{y_{\alpha}}G_{ij}(x,0)}}\abs{F_{\alpha j}(y)}dy
\\
& \leq C\int_{\frac{\abs{x}}{2}<\abs{y}}
\bke{ \frac {x_n}{|x-y|^{n-1}|x-y^*|}  + \frac {x_n}{|x|^n}} |y|^{1-a} dy
\le C\frac{x_n} {\abs{x}^{a-1}}.
}
This completes the proof.
\end{proof}

\cmtm{
\begin{remark} \label{rem3.4}
If $a\le n+1$, the integral \eqref{eq3.12} for $b_j$ diverges and the asymptotic formula \eqref{eq3.6} is meaningless. However, the integral \eqref{eq3.10} still converges if $1<a<\infty$ and %
\EQ{
|v(x)| &\lec
 \int_{\R^n_+}  \frac {x_n}{|x-y|^{n-1}|x-y^*|}  \bka{y}^{1-a} dy.
}
By estimating the integral in the two regions $\bket{|y|<|x|/2}$ and
$\bket{|y|>|x|/2}$ separately as in \eqref{eq3.13}, 
\EQ{
|v(x)| &\lec \frac{ x_n} {\bka{x}^{\min(n,a-1)}}\, (1+ 1_{a=n+1} \log \bka{x}).
}

\end{remark}
}

Next we consider the Navier-Stokes equations in the half-space, i.e.,
\begin{equation}\label{NSe-KMT-30}
-\Delta u+(u\cdot\nabla )u+\nabla p=f+\nabla\cdot F, \qquad{\rm
div}\,u=0\qquad \mbox{in }\,\,\R^n_+,
\end{equation}
\begin{equation}\label{NSe-KMT-40}
u=0\qquad \mbox{on }\,\,\partial\R^n_+=\{x_n=0\}.
\end{equation}
\cmtm{A \emph{weak solution} $u$ of \eqref{NSe-KMT-30}-\eqref{NSe-KMT-40} is a weak solution of 
 \eqref{stokes-KMT-10}-\eqref{stokes-KMT-20} with force $f + \nabla \cdot (F - u \otimes u)$.}
 
If the decay rates of external forces $f$ and $F$ are sufficiently
fast with small coefficient, there exist solutions of the
Navier-Stokes equations \eqref{NSe-KMT-30}-\eqref{NSe-KMT-40}, whose
spatial asymptotics is of $-n+1$-order. Our result reads as follows:

\begin{theorem} [Existence and aysmptotics of NSE] \label{nse-200}
Let $n \ge 3$ and $a>n+1$. There exists $\epsilon_0>0$
such that if $\abs{f(x)}\le \epsilon\langle x \rangle^{-a}$ and
$\abs{F(x)}\le \epsilon\langle x \rangle^{-a+1}$ with $\epsilon<\epsilon_0$, then there
exists a weak solution $u$ of the Navier-Stokes equations
\eqref{NSe-KMT-30}-\eqref{NSe-KMT-40} in $\R^n_+$ with $\abs{u(x)}\lesssim  \frac{ \epsilon x_n}{\bka{x}^n} \lec
\epsilon \langle x \rangle^{-n+1}$ and, furthermore, its asymptotics
is given as
\begin{equation}\label{asym-nse-100}
u_i(x)=\sum_{j=1}^{n}K_{ij}(x)\tilde{b}_j+O(\cmtm{\e} \td \de(x)),
\end{equation}
where
\begin{equation}\label{asymptotics-KMT-200}
\tilde{b}_n=0,\quad \tilde{b}_j = \int_{\R^n_+} \bket{ u_n(y)u_j(y)
+y_n f_j(y)-F_{nj}(y)} dy, \quad (j <n),
\end{equation}
\[
\label{asymptotics-KMT-201}
\td \de(x) =\frac {x_n}{\bka{x}^{\min(n+1, a-1)}}
 (1+1_{\td a=n+2} \log \bka{x}), \quad \td a = \min (a, 2n-1),
\]
and $K_{ij}(x)=\frac{2 x_n x_i x_j}{ \omega_n |x|^{n+2}}$ is the Poisson kernel for the Stokes system in the half
space.
\end{theorem}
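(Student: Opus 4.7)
The plan is to construct $u$ via contraction mapping in a weighted $L^\infty$ space that encodes the desired decay $|u(x)| \lec \epsilon x_n/\bka{x}^n$, then feed the fixed point back into the linear asymptotic result Theorem \ref{stokes-100} to extract the profile. I would work in the Banach space $X = \{u \in L^\infty_{loc}(\R^n_+) : \|u\|_X := \sup_{x \in \R^n_+} \bka{x}^n x_n^{-1}|u(x)| < \infty\}$ and in the closed ball $B_M = \{u \in X : \|u\|_X \le M\epsilon\}$ for an $M$ to be chosen later. Since $\div u = 0$, I rewrite $(u\cdot\nb)u = \nb\cdot(u\otimes u)$, and via Lemma \ref{th:f.dec+} I absorb $f$ into a divergence-form tensor $F^{(0)}$ with $|F^{(0)}(y)| \lec \epsilon\bka{y}^{-a+1}$. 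For $u \in B_M$, define $Tu = v$ to be the Stokes solution of \eqref{stokes-KMT-10}-\eqref{stokes-KMT-20} with force $\nb\cdot\tilde F$, where $\tilde F := F + F^{(0)} - u\otimes u$.

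The key linear input is the gradient bound \eqref{eq:th24-4} of Theorem \ref{th2.5b}: combined with the representation $v_i(x) = -\int_{\R^n_+} \pd_{y_\al} G_{ij}(x,y)\tilde F_{\al j}(y)\,dy$, it yields $|v(x)| \lec \int_{\R^n_+} \frac{x_n}{|x-y|^{n-1}|x-y^*|} |\tilde F(y)|\, dy$. The core weighted estimate I must establish is that for $b > n$,
\[
\int_{\R^n_+} \frac{x_n}{|x-y|^{n-1}|x-y^*|} \bka{y}^{-b}\,dy \lec \frac{x_n}{\bka{x}^n},
\]
proved by splitting the domain at $A_x = \{|y|<|x|/2\}$ (where $|x-y|\sim|x-y^*|\sim|x|$ allows one to pull the kernel out) and $B_x = \{|y|\ge|x|/2\}$, combined with \eqref{eq3.15}. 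Since $|\tilde F(y)| \lec \epsilon\bka{y}^{-a+1} + M^2\epsilon^2\bka{y}^{-2(n-1)}$ and both exponents $a-1$ and $2(n-1)$ exceed $n$ when $a>n+1$ and $n\ge 3$, this gives $\|Tu\|_X \le C(1 + M^2\epsilon)\epsilon$. The same argument applied via $|u\otimes u - u'\otimes u'| \lec M\epsilon\|u-u'\|_X(y_n/\bka{y}^n)^2$ yields $\|Tu - Tu'\|_X \le CM\epsilon\|u-u'\|_X$. Choosing $M$ large, then $\epsilon_0$ small, makes $T$ a contraction on $B_M$ and produces a fixed point $u$.

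For the asymptotics, apply Theorem \ref{stokes-100} to $u$ viewed as a Stokes solution with force $f$ and tensor $F - u\otimes u$. From the $X$-bound on $u$, $|u\otimes u(y)| \lec \epsilon^2\bka{y}^{-2(n-1)}$, so the effective tensor decays like $\bka{y}^{-\tilde a+1}$ with $\tilde a = \min(a,2n-1)$; since $\tilde a > n+1$ for $n\ge 3$, Theorem \ref{stokes-100} delivers \eqref{asym-nse-100}-\eqref{asymptotics-KMT-200} with the error $O(\tilde\delta(x))$ of \eqref{asymptotics-KMT-201}, the coefficient $\tilde b_j = \int_{\R^n_+}(y_n f_j - (F - u\otimes u)_{nj})\,dy$ reproducing the stated formula via the identity $-(u\otimes u)_{nj} = -u_n u_j$, and $\tilde b_n = 0$ coming directly from the linear result.

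The main technical difficulty lies in the sharpness of the pointwise weighted estimate near the borderline $\tilde a = n+2$, which occurs automatically when $n=3$ (since then $2n-1 = n+2$). The logarithmic correction in $\tilde\delta$ arises from the $a = n+2$ case of Theorem \ref{stokes-100}, and one must verify that the splitting argument above does not generate additional parasitic logarithms at this borderline, nor at the threshold $b = n$ hidden inside the nonlinear term when $n=3$. Tracking these borderline gains carefully, rather than the fixed-point mechanism itself, is the delicate part of the proof.
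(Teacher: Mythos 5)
Your proposal is correct and follows essentially the same route as the paper: a contraction/iteration argument based on the Stokes representation formula and the gradient estimate \eqref{eq:th24-4}, followed by a final application of Theorem \ref{stokes-100} to the effective tensor $F-u\otimes u$ to extract the profile. The only cosmetic difference is that you build the anisotropic weight $x_n\bka{x}^{-n}$ directly into the fixed-point norm, whereas the paper runs the iteration in the coarser norm $\sup\bka{x}^{n-1}|v|$ and recovers the $x_n$-weighted decay at the end from Theorem \ref{stokes-100}; your worry about parasitic logarithms is unfounded, since the borderline $b=n$ in your weighted estimate is never hit (you always have $b\ge a-1>n$ or $b=2(n-1)>n$ for $n\ge3$), and the log in $\td\de$ is generated only by the final asymptotic step, exactly as in the paper.
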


Unlike Theorem \ref{stokes-100}, the case $n=2$ is not included in Theorem \ref{nse-200}. Note 
$2n-1\ge n+2$ and
$\tilde a>n+1$ due to $n>2$.

\begin{proof}
As in the proof of Theorem
\ref{stokes-100}, we may assume $f=0$.
Let
\begin{equation}\label{Jan4-KMT-10}
\calK=\bket{ v\in C(\overline{ \R^n_+}; \R^n) : v|_{\pd \R^n_+}=0, \, \norm{v}_{\calK}:=\sup_{x\in\R^n_+}{\langle
x\rangle}^{n-1} \abs{v(x)}< C\epsilon},
\end{equation}
where $0<\epsilon\ll 1$ is sufficiently small and will be
specified later. %
Now we set $v^1(x)=0$ and
iteratively define $v^{k+1}$, $k=1,2,\cdots$, by
\[
v^{k+1}_i(x)  = \int _{\R^n_+} (\pd_{y_\al} G_{ij}) (x,y) (v^k_\al v^k_j -F_{\al j})(y)\,dy,
\]
which solves the Stokes system
\[
-\Delta v^{k+1}+\nabla p^{k+1}=-(v^k\cdot\nabla )v^k+\nabla\cdot F,
\qquad{\rm div}\,v^{k+1}=0\qquad \mbox{in }\,\,\R^n_+,
\]
\[
v^{k+1}=0\qquad \mbox{on }\,\,\partial\R^n_+.
\]
Due to Theorem \ref{stokes-100}, we have $\norm{v^{k+1}}_{\calK}\leq
C\epsilon$ uniformly for all $k=1,2,\cdots$, if $\epsilon$ is sufficiently small.
If we set $\delta v^k:=v^{k+1}-v^{k}$, we have
\[
\de v^{k+1} _i(x)  = \int _{\R^n_+} (\pd_{y_\al} G_{ij}) (x,y) (v^{k+1}_\al\de v^k_j + v^k_j  \de v^k_\al  )(y)\,dy,
\]
and hence
\[
\norm{\de v^{k+1}}_{\calK}\leq
C\epsilon\norm{\de v^k }_{\calK}.
\]
The argument of contraction mapping gives a unique solution $u$ of
\[
u_i(x)  = \int _{\R^n_+} (\pd_{y_\al} G_{ij}) (x,y) (u_\al u_j -F_{\al j})(y)\,dy,
\quad \norm{u}_{\calK} \le C \epsilon.
\]

Finally, we may consider $u$ as a solution of the Stokes system with force tensor $F_{\al j}-u_\al u_j$.
Since $ |(F_{\al j}-u_\al u_j )(x)| \le C \epsilon \bka{x}^{-\td a+1}$ with $\td a= \min (a, 2n-1)$,
Theorem \ref{stokes-100} gives the desired asymptotics
\eqref{asym-nse-100}--\eqref{asymptotics-KMT-201}.
\end{proof}

Next theorem shows that for any vector $b=(b_1,\cdots, b_{n-1}, 0)$
with small magnitude the Navier-Stokes equations
\eqref{NSe-KMT-30}-\eqref{NSe-KMT-40} has a solution whose leading
asymptotics is $\sum_{j=1}^{n-1}K_{ij}b_j$. More precisely, we have
the following.

\begin{theorem}[Asymptotic completeness] \label{nse-600}
Let $n\ge 3$. There exists a small number $\epsilon_1>0$ such
that if  $b=(b_1,\cdots, b_{n-1},0)$,
 $\abs{b}=\epsilon<\epsilon_1$, then there exists a smooth 2-tensor $F$ supported in $B_1 \cap \R^n_+$ and a
weak solution $u$ of the Navier-Stokes equations
\eqref{NSe-KMT-30}-\eqref{NSe-KMT-40} corresponding to this $F$ and zero $f$, satisfying
\begin{equation}\label{asym-Dec29-10}
u_i(x)=\sum_{j=1}^{n-1}K_{ij}b_j+ \cmtm{ O\bke{\td \de(x) },}
\end{equation}
\cmtm{ where $\td \de(x)$ is given by \eqref{asymptotics-KMT-201}.}
\end{theorem}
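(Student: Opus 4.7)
The plan is to realize the prescribed asymptotic vector $b=(b_1,\ldots,b_{n-1},0)$ as the output of the existence theorem \ref{nse-200} by adjusting an $(n-1)$-parameter family of compactly supported forces $F^c$, then solving a finite-dimensional fixed-point problem for the parameters $c$. The key observation is that, for $F$ compactly supported and $f\equiv 0$, the formula \eqref{asymptotics-KMT-200} reduces to $\td b_j = \int u_nu_j - \int F_{nj}$, in which the first term is quadratically small in the data and can be treated as a perturbation.

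First I fix a scalar $\phi \in C^\infty_c(\R^n_+)$ with $\supp\phi\subset B_1\cap\R^n_+$ and $\int\phi\,dy=1$, and for $c=(c_1,\ldots,c_{n-1})\in\R^{n-1}$ define the smooth compactly supported 2-tensor $F^c$ by
\[
F^c_{\al j}(y) := -\de_{\al n}(1-\de_{jn})\,c_j\,\phi(y),
\]
so that $|F^c(y)|\lesssim|c|\bka{y}^{-A}$ for every $A>0$. Then Theorem \ref{nse-200} applies for $|c|<\ep_0$ and produces a unique weak solution $u^c$ with $\|u^c\|_{\calK}\lesssim|c|$, satisfying
\[
u^c_i(x) = \sum_{j=1}^{n-1}K_{ij}(x)\,\td b_j(c) + O\bke{|c|\,\td\de(x)}, \qquad \td b_j(c) = \int_{\R^n_+}u^c_n u^c_j\,dy + c_j,
\]
where the formula for $\td b_j(c)$ uses $\int F^c_{nj}\,dy=-c_j$. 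The integral $\int u^c_nu^c_j\,dy$ converges because $|u^c_nu^c_j|\lesssim|c|^2\bka{y}^{-2(n-1)}$ and $2(n-1)>n$ precisely when $n\ge 3$.

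Matching $\td b_j(c)=b_j$ for $j<n$ becomes the fixed-point problem $c=T(c)$ for
\[
T(c)_j := b_j - \int_{\R^n_+}u^c_n u^c_j\,dy, \qquad T:\overline{B_{2|b|}(0)}\subset\R^{n-1}\to\R^{n-1}.
\]
Self-mapping is immediate from $|T(c)|\le|b|+C|c|^2\le 2|b|$ once $|b|$ is small. For the contraction I set $w:=u^c-u^{c'}$, observe that $w$ solves the Stokes system with force $-\nb\cdot(u^c\otimes u^c-u^{c'}\otimes u^{c'})+\nb\cdot(F^c-F^{c'})$, and repeat the contraction estimate of Theorem \ref{nse-200} on this difference equation to obtain
\[
\|w\|_{\calK}\le C\bke{\|u^c\|_{\calK}+\|u^{c'}\|_{\calK}}\|w\|_{\calK}+C|c-c'|,
\]
which yields $\|w\|_{\calK}\lesssim|c-c'|$ after absorbing the first term on the right. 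Consequently $|T(c)-T(c')|\lesssim\bke{\|u^c\|_{\calK}+\|u^{c'}\|_{\calK}}\|w\|_{\calK}\lesssim|b|\,|c-c'|$, so $T$ is a strict contraction for $|b|<\ep_1$ small. Banach's fixed-point theorem gives a unique $c^*\in\overline{B_{2|b|}}$, and then $F:=F^{c^*}$, $u:=u^{c^*}$ satisfy \eqref{asym-Dec29-10} because $\td b_j(c^*)=b_j$.

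The only step requiring real care is the Lipschitz dependence $\|u^c-u^{c'}\|_{\calK}\lesssim|c-c'|$, but this is a routine re-run of the Picard iteration used to prove Theorem \ref{nse-200} applied to the difference equation, using the same Green-tensor pointwise bounds from Section \ref{sec2}; no new idea beyond what is already present in the existence proof is required. The one structural place a reader should pause is the convergence of $\int u^c_nu^c_j\,dy$, which is exactly where the dimensional hypothesis $n\ge 3$ enters and is why the two-dimensional case is excluded from the statement.
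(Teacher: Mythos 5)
Your construction of the one‑parameter family of forces $F^c$ and the reduction to a finite‑dimensional fixed‑point problem for the coefficients is exactly the paper's setup, but you close the argument with Banach's contraction principle whereas the paper invokes Brouwer's fixed‑point theorem. The Brouwer route only needs that $a\mapsto B_{NS}(a)$ is continuous and that $\Phi(a)=a-B_{NS}(a)+b$ maps the small ball $D_{2\ep}$ into itself; it sidesteps the need for any quantitative stability estimate of the solution map $c\mapsto u^c$. Your route instead requires the Lipschitz stability $\|u^c-u^{c'}\|_{\calK}\lesssim|c-c'|$, which you obtain correctly by rerunning the Picard estimate on the difference equation. This is genuinely more work, but it pays: Banach gives uniqueness of the parameter $c^*$ in the ball (hence of the constructed pair $(F,u)$ within that family), which Brouwer does not. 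Both arguments rely on the same two ingredients at the point where $n\ge3$ enters — convergence of $\int u_n u_j$ and smallness of the quadratic correction $|B_{NS}(a)-a|\lesssim|a|^2$ — so the dimensional restriction is used in the same place in both proofs. Your proof is correct; it simply trades a slightly heavier stability lemma for the stronger conclusion that comes with the contraction mapping theorem.
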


\begin{proof}
Fix any smooth scalar function $\phi$ supported in $B_1 \cap \R^n_+$ with $\int \phi = 1$.
For small $a=(a_1, \cdots, a_{n-1})$, define 2-tensor $F^a$ by
\[
F^a_{ij} = 0 \quad \text{if} \quad i<n; \quad F^a_{ij} = -a_j \phi  \quad \text{if} \quad i=n.
\]
By Theorem \ref{nse-200}, there is a solution $u^a$ of the Navier-Stokes equations with force $F^a$ and zero $f$ if $|a|\le \epsilon_0$ for some small $\epsilon_0>0$. We have $|u^a(x)| \le C |a| \bka{x}^{1-n}$. The coefficients $(\td b_1, \cdots, \td b_{n-1})$ of the leading term in \eqref{asym-nse-100} for $u^a$ will be denoted as $B_{NS}(a)$. Thus
$B_{NS}(a)_j = a_j +  \int _{\R^n_+} u^a_ n u^a_j $, and for some $C_1$,
\[
|B_{NS}(a)_j- a_j| =  |\int _{\R^n_+} u^a_ n u^a_j | \le \frac{C_1}n |a|^2, \quad |B_{NS}(a)- B_{NS}(\td a)| \le C_1 |a-\td a|.
\]

For given small $b$ we want to solve $a$ so that $B_{NS}(a)=b$. This equation can be rewritten as a fixed point problem
\[
a = \Phi(a), \quad  \Phi(a) : = a - B_{NS}(a) + b.
\]
Denote $D_r = \bket{a \in \R^{n-1}: |a| \le r}$. One checks easily that $\Phi$ is continuous on $D_{\epsilon_0}$. Denote $\epsilon_1 =
\min(\epsilon_0/2, \frac 1{4C_1})$. Suppose $|b|=\epsilon \le \epsilon_1$. For
$a \in D_{2\epsilon}$, we have
\[
|\Phi(a)| \le |a - B_{NS}(a) |+ |b| \le C_1(2\epsilon)^2 + \epsilon \le 2  \epsilon.
\]
Thus $\Phi$ is a continuous map that maps the closed disk $D_{2\epsilon}$ into itself. By Brouwer fixed point theorem, $\Phi$ has a fixed point
in $D_{2\epsilon}$. This completes the proof.
\end{proof}

The next theorem is an application of Theorem \ref{nse-200} and considers the asymptotics of any \emph{given} solution.

\begin{theorem}[Asymptotics] \label{application-KMT-100}
Let $n \ge 3$ and $a> n+1$. Suppose that $u\in W^{1,2}_{loc}(\overline{\R^n_+} )$ is a weak solution of the Navier-Stokes
equations \eqref{NSe-KMT-30}-\eqref{NSe-KMT-40} with force $f+ \nb\cdot F$.

\begin{itemize}
\item[(i)] Suppose $|u(x)| \le C \bka{x}^{-m}$, $m > \max\bket{\frac{n-2}2, \frac  {n-1}3, \frac n4}$,  $\abs{f(x)}\le \epsilon\langle x
\rangle^{-a}$ and $\abs{F(x)}\le \epsilon\langle x
\rangle^{-a+1}$ for sufficiently small $\epsilon$.
Then, $u$ agrees with the solution of Theorem \ref{nse-200},  $\abs{u(x)} \le C\epsilon x_n\langle x \rangle^{-n}$,
 and its asymptotics is
given by \eqref{asym-nse-100} with $\tilde b_j$ and $\tilde \delta(x)$ given by \eqref{asymptotics-KMT-200} and 
\eqref{asymptotics-KMT-201}.

\item[(ii)]
Suppose $\abs{f(x)}\le C \langle x \rangle^{-a}$ and
$\abs{F(x)}\le C \langle x \rangle^{-a+1}$, and
$\abs{u(x)}\le C\langle x \rangle^{-1-\sigma}$ for some $\sigma>0$.
Then, $\abs{u(x)} \le Cx_n\langle x \rangle^{-n}$,  and its asymptotics is given as
\begin{equation}\label{asym-nse-100b}
u_i(x)=\sum_{j=1}^{n}K_{ij}(x)\tilde{b}_j+O( \td \de(x)),
\end{equation}
with $\tilde b_j$ and $\tilde \delta(x)$ given by \eqref{asymptotics-KMT-200} and 
\eqref{asymptotics-KMT-201}.

\item[(iii)]
Suppose $\abs{f(x)}\le C \langle x \rangle^{-a}$ and
$\abs{F(x)}\le C \langle x \rangle^{-a+1}$, and $|u(x)| \le \epsilon \bka{x}^{-1}$
 for sufficiently small $\epsilon$. Then,
$\abs{u(x)} \le C x_n\langle x \rangle^{-n}$, and its asymptotics is
given by \eqref{asym-nse-100b} with $\tilde b_j$ and $\tilde \delta(x)$ given by \eqref{asymptotics-KMT-200} and 
\eqref{asymptotics-KMT-201}.
\end{itemize}

\end{theorem}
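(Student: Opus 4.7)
The strategy is uniform across (i), (ii), (iii): regard $u$ as a weak solution of the Stokes system \eqref{stokes-KMT-10}-\eqref{stokes-KMT-20} with effective force tensor $\tilde F = F - u\otimes u$ (absorbing $f$ into a divergence via Lemma \ref{th:f.dec+}), and bootstrap the pointwise decay of $u$ through iterated application of the Stokes estimate of Remark \ref{rem3.4}, which gives $|u(x)| \lec x_n\bka{x}^{-\min(n,r)}$ from $|\tilde F(y)|\lec\bka{y}^{-r}$, up to logarithmic factors at critical exponents. Once the target bound $|u(x)|\lec x_n\bka{x}^{-n}$ is reached, we have $|u\otimes u(y)|\lec y_n^2\bka{y}^{-2n}\lec\bka{y}^{-(2n-2)}$, so $|\tilde F(y)|\lec\bka{y}^{-\min(a-1,2n-2)}$, and Theorem \ref{stokes-100} applied to the Stokes system with this force produces the asymptotic formula \eqref{asym-nse-100b} with error exponent governed by $\tilde a=\min(a,2n-1)$ as in \eqref{asymptotics-KMT-201}.

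Part (ii) is the cleanest: setting $m_0=1+\sigma>1$ and iterating $m_{k+1}=\min(n,2m_k)-1$, the strict inequality $m_k>1$ is preserved at every step, so after finitely many iterations $2m_k\ge n$ and the target is attained in one more round. For parts (i) and (iii) the plain iteration is marginal, and one instead retains the $x_n$ prefactor in successive bounds and exploits the sharpened gradient estimate \eqref{eq:th24-4}, which gives $|\partial_y G(x,y)|\lec x_n/(|x-y|^{n-1}|x-y^*|)$, to cancel one power of $y_n$ in $|u\otimes u(y)|\lec y_n^2\bka{y}^{-2p}$ against the $|x-y^*|$ factor in the kernel, buying one unit of extra decay per iteration. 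In part (iii), the smallness $\epsilon$ further propagates as $\epsilon\mapsto C\epsilon^2$ and compensates for the borderline initial rate $m_0=1$. In part (i), each of the three thresholds $\tfrac{n-2}{2}$, $\tfrac{n-1}{3}$, $\tfrac{n}{4}$ is the condition that ensures the corresponding iteration round (first, second, third) produces a strictly larger effective decay exponent via the standard convolution bound $\int \bka{y}^{-s}|x-y|^{-(n-1)}\,dy\lec \bka{x}^{1-s}$ valid for $1<s<n$; taking the maximum ensures all three iterations close in turn until $|u|\lec\epsilon x_n\bka{x}^{-n}$ is attained.

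For part (i), once this bound is established $u$ lies in the space $\calK$ defined in \eqref{Jan4-KMT-10} with $\norm{u}_{\calK}\lec\epsilon$, and the contraction argument from the proof of Theorem \ref{nse-200} forces $u$ to coincide with the unique fixed point constructed there, settling the identification claim. The main obstacle is the bootstrap for part (i) at the weakest threshold $m>n/4$, binding in low dimensions, where three successive iteration rounds must be tracked with care. The sharpened gradient estimate \eqref{eq:th24-4}, which retains the $x_n$ prefactor even after arbitrary $y$-derivatives, is indispensable here: with only the cruder bound \eqref{eq:th24-1} $|\partial_y G|\lec|x-y|^{-(n-1)}$ one would lose the $x_n$ factor and hence the ability to trade $y_n$-powers against $|x-y^*|$, and the iteration would not close below the higher, dimension-independent threshold $m>1$.
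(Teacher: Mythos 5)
Your approach to Case (ii) essentially matches the paper: write $u$ as a solution of the Stokes system with force tensor $F - u\otimes u$, apply the representation formula and Remark~\ref{rem3.4}, and iterate the improving decay exponent until the nonlinear term decays faster than $\bka{x}^{-n-1}$, at which point Theorem~\ref{stokes-100} delivers the asymptotics. Case (iii) is also roughly in the spirit of the paper, although the paper's mechanism is more specific than a blanket ``$\epsilon \mapsto C\epsilon^2$'' heuristic: it constructs an auxiliary solution $v$ of the \emph{linearized} system $-\Delta v + \nabla\pi + (u\cdot\nabla)v = \nabla\cdot F$ by a contraction in the weighted space $\{|v(x)| \lec \bka{x}^{-1-\sigma}\}$ (the smallness $\epsilon$ lives in the coefficient $u$, not in $v$), and then shows $w = u - v = 0$ by a weighted $L^\infty$ uniqueness argument; only afterward is Case (ii) invoked. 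A pointwise self-improvement of $u$ directly from $|u| \le \epsilon\bka{x}^{-1}$ does not raise the exponent, since $|u\otimes u|\lec \epsilon^2\bka{x}^{-2}$ feeds back $\lec \epsilon^2 x_n\bka{x}^{-2}\lec \epsilon^2\bka{x}^{-1}$: the coefficient shrinks but the power does not.

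The serious gap is Case (i). Your proposed pointwise bootstrap cannot start from $m > n/4$. For $n = 3$ the hypothesis allows $m \in (3/4, 1]$, and at such $m$ the representation-formula iteration does not improve the decay even with the $x_n$-retention trick. Concretely, if $|u(y)| \lec y_n\bka{y}^{-p}$ then $|u\otimes u(y)| \lec y_n^2\bka{y}^{-2p} \le \bka{y}^{2-2p}$, and the integral against $\frac{x_n}{|x-y|^{n-1}|x-y^*|}$ over $|y| < |x|/2$ produces $x_n\bka{x}^{-\min(n, 2p-2)}$, so the new exponent is $\min(n, 2p-2)$, which exceeds $p$ only when $p > 2$, equivalently $m = p - 1 > 1$. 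Writing $y_n^2 / |x-y^*| \le y_n$ does not \emph{gain} a power — $|x-y^*|\ge x_n + y_n$ only prevents a loss — so your claim that this cancellation ``buys one unit of extra decay per iteration'' is not correct. The paper handles Case (i) by an entirely different mechanism: it invokes Theorem~\ref{nse-200} to produce a reference solution $\tilde u$, sets $w = u - \tilde u$, establishes $\nabla w \in L^2(\R^n_+)$ via the local pressure-free Caccioppoli estimate \cite[Theorem 3.4]{MR2027755}, and then tests the difference equation with $w\zeta^2$ (with $\zeta = Z(|x|/R)$) to show $\int |\nabla(w\zeta)|^2 \to 0$ as $R \to \infty$, absorbing the critical cubic term $I_5$ by the smallness $\epsilon$. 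The three thresholds $\frac{n-2}{2}$, $\frac{n-1}{3}$, $\frac{n}{4}$ do not come from a convolution lemma $\int\bka{y}^{-s}|x-y|^{1-n}dy$ as you propose; they are exactly the integrability conditions making the boundary terms in that energy identity vanish — $m > \frac{n}{4}$ so that $\|u\otimes w\|_{L^2(B_{2R}^+)}$ is bounded, $m > \frac{n-2}{2}$ so that $R^{n/2-1-m} \to 0$ (the pressure and lower-order terms), and $m > \frac{n-1}{3}$ so that $R^{n-1-3m} \to 0$ (the cubic flux terms). Without this energy argument the identification $u = \tilde u$ — and hence the conclusion — is not established under the stated hypotheses.
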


Note that Case (i) assumes small $f$ and $F$ but allows large $u$,  Case (ii) allows large $f$, $F$ and $u$ but assumes extra decay, and Case (iii)
assumes small $u$ but allows large $f$ and $F$. \cmtm{Also note that we do not claim smallness in Case (iii). The error estimate has a small factor $\e$ only in Case (i).}

\begin{proof}
As in the previous proofs, we assume that $f=0$ without loss
of generality.

\medskip

\noindent
$\bullet$ Case (i).\quad We may assume $m<n-1$.
By Theorem \ref{nse-200}, there exists a solution
$\tilde{u}$, which satisfies the conclusion of Theorem
\ref{nse-200}. Thus, it suffices to show $u=\tilde{u}$. Set $w=u-\tilde{u}$ and $q=p-\tilde{p}$.
We get
\[
\label{eq3.36}
-\Delta w+\nabla q=-(u\cdot\nabla )w- (w\cdot\nabla )\tilde{u},
\qquad{\rm div}\,w=0\qquad \mbox{in }\,\,\R^n_+,
\]
\[
w=0\qquad \mbox{on }\,\,\partial\R^n_+=\{x_n=0\}.
\]
{By \cite[Theorem 3.4]{MR2027755}, for $R>1$,
\[
\label{eq3.39}
\norm{\nb w}_{L^2(B_R^+)} + \norm{q - q_R}_{L^2(B_R^+)}
\le C 
\norm{ u \otimes w+ w \otimes \td u}_{L^2(B_{2R}^+)} + \frac CR \norm{ w}_{L^2(B_{2R}^+)}\le C,
\]
where $q_R = |B_{2R}^+ |^{-1} \int_{B_{2R}^+} q$ and $C$ is independent of $w$ and $R$. The second inequality is due to $|u(x)|+|\td u(x)|+|w(x)| \lec \bka{x}^{-m}$. 
In particular $\nb w \in L^2(\R^n_+)$.

Let $ Z \in C^2(\R)$ with $0 \le Z(t) \le 1$, $Z(t)=0$ for $t>1$ and $Z(t)=1$ for $t<1/2$. Let $\zeta= Z(|x|/R)$. 
Testing \eqref{eq3.36} with  $w\zeta^2$ and integrating by parts, we get
\EQ{
\int |\nb (w\zeta)|^2 & = 
\int \bket{ qw_i \pd_i \zeta^2 + \frac { |w|^2}2 u_i \pd_i \zeta^2
+ |w|^2 |\nb \zeta|^2
+ w_j \tilde u _i \bkt{w_i \zeta \pd_j \zeta  +\zeta \pd_j(w_i \zeta) }}
= \sum_{j=1}^5 I_j.
}
Note $|I_2| + |I_4| \lec R^{n-1-3m}$, $|I_3| \lec R^{n-2-2m}$, 
Also, 
\[
I_1 = \int (q-q_R)w \cdot \nb \zeta ^2\le \norm{q-q_R}_{L^2(B_R^+)} \norm{w \cdot\nb \zeta^2}_{L^2(B_R^+)} \lec R^{\frac n2 -1-m}
\]
using \eqref{eq3.39}.
Finally,
\[
|I_5|\le \norm{\td u}_{L^n}  \norm{w\zeta}_{L^{\frac{2n}{n-2}}} \norm{\nb (w\zeta)}_{L^2} \le C\e \int |\nb (w\zeta)|^2 .
\]
If $C \e <1$, we get $\int |\nb (w\zeta)|^2 \le o(1)$.
Taking $R \to \infty$, we get $\nb w=0$ and $w=0$.
}

\medskip
\noindent
$\bullet$ Case (ii).\quad
We may assume $0<\sigma<n-2$. By uniqueness (Lemma \ref{th:uniqueness}), we have the representation formula, 
\[
u_i(x)=  \sum_{j=1}^n \sum_{\alpha=1}^n \int_{\R^n_+} \partial_{y_{\alpha}} G_{ij}(x,y)\bke{u_{\alpha} u_j - F_{\alpha
j}}(y)\,dy.
\]
The contribution from $F$ is bounded by $\frac {Cx_n}{\bka{x}^n}$ by Theorem \ref{stokes-100}. 
The nonlinearity satisfies $|u_\al u_j(y)| \le C \bka{y}^{1-a'}$ with $a '= 2\sigma+3>1$. By Remark \ref{rem3.4}, 
\EQ{
|u(x)| 
&\lec  \frac{ x_n} {\bka{x}^{\min(n,a'-1)}}\, (1+ 1_{a'=n+1} \log \bka{x})+\frac {x_n}{\bka{x}^n}.
}
If $a'-1\le n$, we can avoid  the log factor by taking a slightly smaller $a'$ and
we get 
\[
|u(x)| \lec  \frac { x_n}  {\bka{x}^{ \frac 32 \sigma+2}} \lec \bka{x}^{-1-\frac 32 \sigma}. 
\]
We can repeat this procedure until we obtain $a'-1>n$ and hence
 $\abs{u(x)}\leq
\frac{Cx_n}{\abs{x}^{n}}$. We then use Theorem \ref{stokes-100} to get its asymptotics. %

\medskip
\noindent
$\bullet$ Case (iii).\quad Fix $\sigma \in (0,1)$.
We will construct a solution $v$ satisfying
$\abs{v(x)}\le C\langle x \rangle^{-1-\sigma}$ and the following perturbed equations
\[
\label{eq3.45}
-\Delta v+\nabla \pi+(u\cdot\nabla)v=\nabla F,\quad {\rm div}\,
v=0\qquad \mbox{in }\,\,\R^n_+,
\]
\[
\label{eq3.46}
v=0\qquad \mbox{on }\,\,\partial\R^n_+=\{x_n=0\}.
\]
This can be done by iteration: Let $v^{(0)}=0$ and define $v^{k+1}$ for $k \ge 0$ by
\[
v^{k+1}_i(x)  = \int _{\R^n_+} (\pd_{y_\al} G_{ij}) (x,y) (u^k_\al v^k_j -F_{\al j})(y)\,dy.
\]
For $\e$ sufficiently small, 
 $\abs{v^{(k+1)}(x)}\le C\langle x
\rangle^{-1-\sigma}$ uniformly in $k$ using Remark \ref{rem3.4}, and 
converges to some $v$ with the same bound. The difference $w=u-v$ satisfies 
\[
w_i(x)  = \int _{\R^n_+} (\pd_{y_\al} G_{ij}) (x,y) (u^k_\al w^k_j )(y)\,dy, \quad |w(x)| \le C \bka{x}^{-1-\sigma}.
\]
By Remark \ref{rem3.4}, 
\[
\norm{ \bka{x}^{1+\si} w(x)}_{L^\infty} \le C \e \norm{ \bka{x}^{1+\si} w(x)}_{L^\infty}.
\]
Thus, if $\e$ is sufficiently small, $w=0$ and $|u(x)| \le C \bka{x}^{-1-\sigma}$.

By Case (ii), we deduce $|u(x)| \le C x_n\bka{x}^{-n}$. %
\end{proof}

Another application of Theorem \ref{nse-200} is on asymptotic profiles
of solutions for the aperture type problem of the Navier-Stokes
equations.
Let $\Sigma_r:=\partial\R^n_+ \setminus B_r=\{(x',0): \abs{x'}\ge
r\}$ and $\Omega_r=\R^n_+\setminus \overline B_r$, and consider
\begin{equation}\label{aperture-100}
-\Delta u+(u\cdot\nabla )u+\nabla p=0, \qquad{\rm div}\,u=0\qquad
\mbox{in }\,\,\Omega_r,
\end{equation}
\begin{equation}\label{aperture-200}
u=0\qquad \mbox{on }\,\,\Sigma_r .
\end{equation}
We emphasize that no boundary condition is imposed on $\partial
B_r\cap\R^n_+$. 

Suppose that $|u(x)| \lec |x|^{-1-\de}$ for large $x$ and is small for $r\le |x|\le \rho<\infty$. Choose
 $r<l_1<l_2<\rho$ and let $\zeta$
be a smooth cut-off function satisfying 
\EQ{
\zeta(x)= \left\{
\begin{array}{cc}
1&\mbox{ if }  \abs{x}>l_2\\
0&\mbox{ if }  \abs{x}<l_1.
\end{array}
\right.
}
 Recall that $\vec{K}_n=(K_{1n}, \cdots, K_{nn})$ is the Poisson kernel for the Stokes system in the half space with $j=n$ and $k_n$ is the pressure corresponding
to $\vec{K}_n$.
Set 
\EQ{ \label{eq3.51}
w=(u-\tilde{b}_n K_n)\zeta+\tilde{w}, \quad \pi=(p-\tilde{b}_n k_n)\cmtm{\zeta},
}
where $\tilde{w}$
solves
\[
{\rm{div}}\,\tilde{w}=(u-\tilde{b}_n K_n)\cdot\nabla\zeta\quad\mbox{
in }\,\,B_{l_2}^+\setminus B_{l_1}, \qquad\qquad
\tilde{w}=0\quad\mbox{ on }\,\,\partial (B_{l_2}^+\setminus B_{l_1}).
\]
Then, $w$ solves
\[
\label{eq3.51+}
-\Delta w+(w\cdot\nabla )w+\nabla \pi=f+\nabla\cdot F, \qquad{\rm
div}\,w=0\qquad \mbox{in }\,\,\R^n_+,
\]
\[
\label{eq3.52}
w=0\qquad \mbox{on }\,\,\partial\R^n_+=\{x_n=0\},
\]
where $f$ and $F=(F_{\alpha j})_{\al,j=1,\ldots,n}$ are given by
\[
\label{KMT-aperture-100}
f= \cmtm{ (u-\tilde{b}_n K_n)\Delta\zeta}
+(p-\cmtm{\tilde{b}_n} k_n)\nabla\zeta+u\cdot\nabla\zeta u,
\]
\EQ{
\label{KMT-aperture-200}
F_{\alpha j}=- \cmtm{2}{\partial_{\alpha}\zeta(u-\tilde{b}_n K_n)_j}  
-\delta_{\alpha j}\partial_{j}\tilde{w}
+ \tilde{w}_{\alpha}((u-\tilde{b}_n
K_n)\zeta+\tilde{w})_j
+{(u-\tilde{b}_n K_n)_\alpha\zeta \tilde{w}_j}
\\
-{(\tilde{b}_n
K_n)_\alpha\zeta (u-\tilde{b}_n K_n)_j\zeta
}
-{u_{\alpha}\zeta(\tilde{b}_n K_n)_j\zeta }
-{u_{\alpha}(1-\zeta)u_j\zeta }.
}
Note that $f$ is small and has compact support, while $F$ is small and decays like $|x|^{1-a}$, $a= n+1+\de$. Thus, if $\de>0$,
the asymptotic profile of $u$ is  $\td b_n K_n$ plus that of $w$  given by \eqref{asymptotics-KMT-200}. 
To be more precise, we have the
following.

\begin{theorem}  [{Aperture type problem}] \label{nse-500}
Let $n \ge 3$, $0<r<\rho<\infty$, and $0<\de<1$. There is a small $\e_0>0$ such that, if $u 
\in  H^1_{loc}(\overline\Omega_r)$ is a weak 
solution of \eqref{aperture-100} and \eqref{aperture-200} in $\Om_r$ satisfying
$\abs{u(x)}\le \langle x
\rangle^{-1-\delta}$ and $\e=\norm{u}_{L^\infty(B_\rho \setminus B_r)} \le \e_0$, then $\abs{u(x)}\lesssim \ve \langle x
\rangle^{-n+1}$ in $\Om_r$ and its asymptotics is given by
\begin{equation}\label{asym-nse-300}
u_i(x)=\sum_{j=1}^{n}K_{ij}\tilde{b}_j+ O\bke{\frac{\cmtm{\e} x_n}{\langle x
\rangle^{n+1}}(1+1_{n=3}\log \langle x\rangle)},
\end{equation}
where $\displaystyle\tilde{b}_n=\int_{\partial B_r\cap\R^n_+} u\cdot
\nu d\sigma$, and $\displaystyle\tilde{b}_j$  for
$j<n$
is given in
\eqref{asymptotics-KMT-200} with $f$ and $F$ in
\eqref{KMT-aperture-100} and \eqref{KMT-aperture-200}.
\end{theorem}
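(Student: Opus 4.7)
The plan is to reduce the aperture problem to Theorem \ref{nse-200} via the extension $w$ in \eqref{eq3.51}. First I would verify that the Bogovskii operator can construct $\tilde w$: this amounts to the mean-zero condition $\int_{B_{l_2}^+\setminus B_{l_1}}(u-\tilde b_n\vec K_n)\cdot\nabla\zeta\,dx=0$, which by the divergence theorem reduces to the assertion that $u-\tilde b_n\vec K_n$ has zero hemispherical flux through every $S_R^+$ with $R\in(l_1,l_2)$. Because $u$ is divergence-free in $\Omega_r$ with $u|_{\Sigma_r}=0$, the flux $\int_{S_R^+}u\cdot\nu\,d\sigma$ is independent of $R\ge r$ and equals $\tilde b_n$, while a short symmetry computation (using $\int_{\partial B_R}x_n^2\,d\sigma=\omega_n R^{n+1}$) gives that the corresponding flux of $\vec K_n$ equals $1$. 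Thus the compatibility holds by the very definition of $\tilde b_n$, and Bogovskii supplies $\tilde w$, supported in the annulus, with norms controlled by $\varepsilon$ together with $|\tilde b_n|\lesssim\varepsilon$.

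Next I would bound the induced data. Every term of \eqref{KMT-aperture-100}--\eqref{KMT-aperture-200} carrying $\nabla\zeta$, $\Delta\zeta$, $\tilde w$, or $1-\zeta$ is supported in $B_{l_2}^+\setminus B_{l_1}$ and is dominated pointwise by $C\varepsilon$ via interior Stokes regularity on $B_\rho^+\setminus B_r^+$ (which yields $\|\nabla u\|_{L^\infty}+\|p-p_0\|_{L^\infty}\lesssim\varepsilon$ on a slightly smaller annulus, the constant $p_0$ dropping out against $\nabla\zeta$). The bilinear terms of $F$ that persist in $\{\zeta=1\}$ decay as $\langle x\rangle^{1-n-\delta}$. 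Thus $|f(x)|\lesssim\varepsilon\langle x\rangle^{-n-1-\delta}$ with compact support and $|F(x)|\lesssim\varepsilon\langle x\rangle^{-n-\delta}$, placing us in the setting of Theorem \ref{nse-200} with $a=n+1+\delta$. That theorem produces a small solution satisfying $|w|\lesssim\varepsilon x_n\langle x\rangle^{-n}$; meanwhile, from the a priori bound $|w(x)|\lesssim\langle x\rangle^{-1-\delta}$ (using $|\vec K_n|\lesssim x_n^2\langle x\rangle^{-n-1}$ and $|\tilde b_n|\lesssim\varepsilon$), Theorem \ref{application-KMT-100}(ii) or (iii) upgrades the decay of $w$ and places it in the uniqueness class, so $w$ coincides with the constructed solution and inherits a first asymptotic expansion $w_i\sim\sum_{j<n}\tilde b_jK_{ij}$.

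To sharpen the error to $x_n\langle x\rangle^{-n-1}(1+1_{n=3}\log\langle x\rangle)$ I would bootstrap once more. With $|w|\lesssim\varepsilon x_n\langle x\rangle^{-n}$ now available, on $\{\zeta=1\}$ the nonlinear difference $u\otimes u-w\otimes w$ reduces to the cross terms $w\otimes\vec K_n+\vec K_n\otimes w+\vec K_n\otimes\vec K_n$, each of size $\lesssim x_n^2\langle x\rangle^{-2n}$; the effective Stokes force for $w$ thus decays like $\langle x\rangle^{-2n+2}$, i.e., $a=2n-1$. Theorem \ref{stokes-100} applied with this improved $a$ delivers the error $\tilde\delta$ with $\tilde a=2n-1$, and the borderline case $\tilde a=n+2$ occurs precisely when $n=3$, producing the logarithm. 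Since $w=u-\tilde b_n\vec K_n$ on $\{|x|>l_2\}$, adding $\tilde b_n\vec K_n$ back recovers \eqref{asym-nse-300}; the coefficient of $\vec K_n$ is $\tilde b_n$ by construction, and the remaining coefficients match \eqref{asymptotics-KMT-200} applied to $(f,F)$.

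The principal obstacle I foresee is the quantitative interior estimate $\|\nabla u\|_{L^\infty}+\|p-p_0\|_{L^\infty}\lesssim\varepsilon$ on a slightly shrunken annulus, needed so that the $\varepsilon$-smallness of $u|_{B_\rho\setminus B_r}$ transfers faithfully to $(f,F)$ and hence to the constant prefactor in front of $\tilde\delta(x)$ in \eqref{asym-nse-300}. Standard $W^{2,q}$ interior (and boundary, near $\Sigma$) Stokes theory, viewing the nonlinearity $(u\cdot\nabla)u$ as a small perturbation, together with Sobolev embedding, should supply this, but the dependence of constants on the geometric parameters $r,\rho,l_1,l_2$ must be tracked and the boundary $\Sigma\cap (B_\rho\setminus B_r)$ treated with boundary regularity.
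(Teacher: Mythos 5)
Your proposal follows the paper's proof almost exactly: cut off with $\zeta$, correct the divergence with a Bogovskii-type $\tilde w$, bound $(f,F)$ via the $\varepsilon$-smallness of $u$ (and of $\tilde b_n$) on the annulus plus interior/boundary Stokes estimates for the pressure, then upgrade the decay of $w$ and apply the asymptotic theorems. Two small remarks. First, your explicit check of the Bogovskii compatibility $\int (u-\tilde b_n\vec K_n)\cdot\nabla\zeta=0$ via the hemispherical-flux computation $\int_{S_R^+}\vec K_n\cdot\nu=1$ is a nice addition that the paper leaves implicit; just mind the factor of $\tfrac12$ when passing from the full sphere to the hemisphere. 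Second, your steps 3--4 (construct a solution via Theorem \ref{nse-200}, then argue $w$ ``lands in the uniqueness class'' by Theorem \ref{application-KMT-100}(ii) or (iii)) are a roundabout and not-quite-rigorous restatement of what Theorem \ref{application-KMT-100}(i) does internally: neither (ii) nor (iii) asserts uniqueness, so they do not by themselves force $w$ to equal the Theorem \ref{nse-200} solution. The paper's cleaner route is to apply (ii) directly to $w$ (which is legitimate since $|w|\lesssim\langle x\rangle^{-1-\delta}$ and $a=n+1+\delta>n+1$), obtain $|w|\lesssim x_n\langle x\rangle^{-n}$ and hence the refined $|F|\lesssim\varepsilon\langle x\rangle^{2-2n}$, and then apply (i) --- now admissible because $m=n-1$ satisfies its hypotheses --- which both gives the $\varepsilon$-factor in the error and carries out the uniqueness argument you were gesturing at. Your final bootstrap via Theorem \ref{stokes-100} recovers the same error $\frac{\varepsilon x_n}{\langle x\rangle^{n+1}}(1+1_{n=3}\log\langle x\rangle)$, so the conclusion matches; the organization is just slightly different.
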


\begin{proof} 
\cmtm{ Choose
 $r<l_1<l_2<\rho$. Taking a partition of unity for the region $B_\rho^+ \setminus B_r$, and using the pressure-independent interior and boundary estimates in \cite{Sverak-Tsai} and \cite[Theorem 3.8]{MR2027755}, we get
\[
\norm{\nb p }_{L^{n+1}(B_{l_2}^+ \setminus B_{l_1})} \le C\e .%
\]
Replacing $p$ by $p-\bar p$ where $\bar p$ is the average of $p$ in $B_{l_2}^+ \setminus B_{l_1}$, we also have $|p|<C\e$ in 
$B_{l_2}^+ \setminus B_{l_1}$ by Sobolev imbedding.
}

Recall that the cut-off $w$ defined in \eqref{eq3.51} satisfies the Navier-Stokes system \eqref{eq3.51+}--\eqref{eq3.52} in $\R^n_+$ with force $f+\nb F$ given in %
\eqref{KMT-aperture-100} and \eqref{KMT-aperture-200}.
Note $|\tilde{b}_n|\leq C\epsilon$,  both $\td w$ and $f$ have compact supports, $|\td w(x)| + |f(x)|\le C\epsilon$, and
$\abs{F(x)}\le C\epsilon\langle x\rangle^{1-a}$ with $a=n+1+\delta$ by the
hypothesis. By assumption $|w(x)| \le C\bka{x}^{-1-\si}$. 

We now first apply Theorem \ref{application-KMT-100} (ii) to get $|w(x)| \le C\bka{x}^{1-n}$, which yields the refined decay estimate
$\abs{F(x)}\le C\epsilon\langle x\rangle^{-(2n-2)}$

We next apply Theorem \ref{application-KMT-100} (i) to get $|w(x)| \le C\e \bka{x}^{1-n}$
and the asymptotic formula \eqref{asym-nse-300}.
\end{proof}

We remark that similar asymptotics as \eqref{asym-nse-300} are known
in \cite[Theorem 6.3]{BP1992} for an aperture problem in dimension
three (see also \cite[Theorem 9.1]{Galdi}). The error term presented
in \cite{BP1992} is of $O(\langle x\rangle^{-2-\eta})$ for any
$\eta\in (0,1)$ and the error term in \eqref{asym-nse-300} is
slightly better in the sense of the log correction, as well as the
presence of an anisotropic effect, namely $O\bke{\frac{ \cmtm{\e} x_3}{\langle
x\rangle^{4}}(1+\log \langle x\rangle)}$ in three dimensions.

\section{Asymptotics of fast decaying flows in the whole space and exterior domains}
\label{sec4}

In this section we study the asymptotic profiles of
\emph{fast decaying} Stokes and Navier-Stokes flows in $\R^n$ and
exterior domains.
It is well-known that the generic decay rate of these flows are $|x|^{-n+2}$.
Our concern here is flows with faster decay $|x|^{-n+1}$, usually due to some cancellation of the force.

We first choose a basis. 
For $j$, $k=1,2,\cdots, n$, we define the vector fields
\[
\Phi^{jk} = (\Phi^{jk}_1,\ldots,\Phi^{jk}_n), \quad
 \Phi^{jk}_i = \pd_k U_{ij}.
 \]
Obviously, for $n \ge 2$,
\[
| \Phi^{jk}(x) | \le C|x|^{-n+1},
\qquad
|\nb  \Phi^{jk}(x) |\le C|x|^{-n}.
\]
We will show that the asymptotic profile of a fast decaying flow
is given by the linear combination of the vector fields
$\Phi^{jk}$,  $(j,k)\neq (n,n)$.
We first collect some properties of  $\Phi^{jk}$.
\begin{lemma} Let $n \ge 2$.
 The set
 \[
 \bket{  \Phi^{jk}: \quad 1 \le  j,k \le n, \quad (j,k) \not = (n,n) }
 \]
consists of $n^2-1$ linearly independent vector fields.
\end{lemma}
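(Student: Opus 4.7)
The plan is to establish that the $n^2$ vector fields $\{\Phi^{jk}\}_{1\le j,k\le n}$ satisfy exactly one linear relation, a trace relation involving $\Phi^{nn}$ with nonzero coefficient, so that the $n^2-1$ fields with $(j,k)\ne (n,n)$ form a basis for their span.

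First I would exhibit one dependence: the symmetry $U_{ij}=U_{ji}$ in \eqref{Uij.def} together with the divergence-free property $\sum_i \partial_i U_{ij}=0$ give $\sum_{j=1}^n \Phi^{jj}_i = \sum_j \partial_j U_{ji} = 0$, which expresses $\Phi^{nn}$ as a combination of $\{\Phi^{jj}:j<n\}$. To show this is the \emph{only} relation, I would suppose $\sum_{j,k} c_{jk}\Phi^{jk}\equiv 0$ and write $M=(c_{jk})$ with symmetric part $M^s$ and skew part $M^a$. Using \eqref{Uij.def} and $\partial_k E = -x_k/(n\omega_n |x|^n)$ to compute $\Phi^{jk}_i$, and then clearing the common denominator $|x|^{n+2}$, the vanishing condition reduces (after summing over $j,k$) to a single polynomial identity
\begin{equation*}
|x|^2\bigl(-2M^a x + (\operatorname{tr} M)\, x\bigr)_i \;=\; n\, x_i\, x^{T} M^s x \qquad \text{for all } x\in\R^n,\ i=1,\dots,n.
\end{equation*}

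Next I would contract with $x$ and use $x^{T}M^a x = 0$ to obtain $\operatorname{tr}(M)|x|^2 = n\, x^{T}M^s x$; as a quadratic-form identity this forces $M^s = (\operatorname{tr} M/n)\,I$. Substituting back, the residual identity reads $|x|^2 M^a x \equiv 0$, so $M^a = 0$ as well. Thus $M = \lambda I$ for some scalar $\lambda$, meaning the kernel of the map $(c_{jk})\mapsto \sum c_{jk}\Phi^{jk}$ is one-dimensional and spanned by the identity matrix. Since this generator has $c_{nn}=1\neq 0$, deleting $\Phi^{nn}$ kills the kernel, and the remaining $n^2-1$ fields are linearly independent. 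The main step is the polynomial-identity bookkeeping in the displayed equation, which cleanly decouples the symmetric and skew-symmetric contributions of $M$; no serious obstacle arises.
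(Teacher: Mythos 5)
Your proof is correct, and it takes a genuinely different route from the paper's. The paper works on the unit sphere, splitting $\Phi^{jk}$ for $j\neq k$ into a radial ("$u^{jk}$") and a tangential ("$v^{jk}$") part, then establishes independence of the radial parts by integrating test monomials like $x_jx_k$ over $|x|=1$ and by evaluating at special points. Your argument is more linear-algebraic: you compute the kernel of the whole map $(c_{jk})\mapsto\sum c_{jk}\Phi^{jk}$ by passing to the symmetric/skew decomposition $M=M^s+M^a$ of the coefficient matrix, clearing the denominator to get a polynomial identity, contracting with $x$ to isolate the quadratic-form equation $(\operatorname{tr}M)|x|^2=nx^TM^sx$ (hence $M^s=\frac{\operatorname{tr}M}{n}I$), and then reading off $M^a=0$ from the residual. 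What this buys you is a clean structural statement — the kernel is exactly $\R\cdot I$ — from which independence of the $(n^2-1)$-element set follows by noting the generator has $c_{nn}\neq 0$; the paper instead shows directly that the specified set is independent without first determining the full kernel. Both land on the same one dependence relation $\sum_j\Phi^{jj}=0$ (which the paper records separately in Remark \ref{w^{jj}}); you derive it from $\div U_{\cdot j}=0$ plus symmetry of $U$, which is the same observation. Your route avoids the spherical integration and is arguably tidier, while the paper's decomposition into $u^{jk}$, $v^{jk}$, $\Phi^{jj}$ gives slightly more geometric information about the span (tangential vs.\ radial on the sphere).
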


\begin{proof}
We first note, by differentiating \eqref{Uij.def},
\[
\Phi^{jk}_i (x)= \pd_k U_{ij} (x)= \frac {c_n}{|x|^n} \bkt{ (\de_{ jk} - \frac {nx_j x_k}{|x|^2} ) x_i + \de_{ik} x_j - \de_{ij} x_k}.
\]
We choose $|x|=1$ and we may omit $c_n$ in the following argument.
$\Phi^{jk}$ is written as
\[
\Phi^{jk}(x) =
\begin{cases}
 (1-n x_j^2)x \qquad (k=j),
\\
 u^{jk} + v^{jk} \qquad (k \neq j),
\end{cases}
\]
$$
u^{jk} _i = - {nx_j x_k}{}  x_i, \quad v^{jk} _i = \de_{ik} x_j - \de_{ij} x_k.
$$
Note that $u^{jk} = u^{kj} =
\frac 12 (\Phi^{jk} +\Phi^{kj}) $ and
$v^{jk} = -v^{kj}= \frac 12 (\Phi^{jk} -\Phi^{kj })$ for $k \not =j$. 
Hence
\[
\text{span} \bket{ \Phi^{jk} , \Phi^{kj } }= \text{span}
\bket{ u^{jk} , v^{jk}}\quad \forall k\not=j,
\]
and
\[
\text{span} _{k \not  =j}
\bket{ \Phi^{jk}  }= \text{span}_{k < j} \bket{ u^{jk} , v^{jk}}.
\]

It is easy to see that the set $\bket{  v^{ jk}: k <j}$ contains
$\frac 12n(n-1)$ linearly independent vectors which are orthogonal to $x$.

On the other hand, the set
\[
\bket{  u^{jk}: k <j} \cup \bket{\Phi^{jj}: j<n}
\]
contains $\frac 12n(n-1) + (n-1)$  vectors which are of the form $\phi(x)  x$.
 We claim this set is linearly independent:
If
\[
f(x) := \sum_{k < j} a_{jk} x_j x_k + \sum _{l<n} b_l (1 - n x_l^2)=0,
\]
then for any $k <j$
\[
0 = \int _{|x|=1} f(x)x_j x_k = a_{jk} \int _{|x|=1} x_j^2 x_k ^2,
\]
since all other terms are odd in some variable.
Thus $a_{jk}=0$ for  any $k <j$.
We then choose $x_n=1$ and $x_j=0$ for $j<n$ to get
\[
\sum_{l<n} b_l=0.
\]
On the other hand, for fixed $m<n$ we choose $x_m=1$ and $x_j=0$ for all
$j \not = m$ to get \[
 (1-n) b_m + \sum_{l < n,~l\neq m} b_l = 0.
\]
Hence we conclude $b_m=0$ for all $m<n$.

We have shown that the set $\{\Phi^{jk}:~ (j,k)\neq (n,n)\}$ consists $n^2-1$ vector fields and the dimension of it span is  $\frac 12n(n-1) + \frac 12n(n-1) +(n-1) = n^2-1$. Thus the set is linearly independent.
\end{proof}
\noindent
\begin{remark}
\label{w^{jj}}
By the definition and the divergence free condition,
we have $ \Phi^{nn}=-\sum_{j=1}^{n-1} \Phi^{jj}$.
Therefore 
$
{\rm dim} \,\text{span} \bket{  \Phi^{jk}: 1 \le k, j \le n }
=n^2-1
$.
\end{remark}

\begin{lemma}
\label{lem:flux} Let $n \ge 2$.
 The flux $c_{jk}  :=\int_{|x|=R}  \Phi^{jk} \cdot \nu$ is zero
for every $j$, $k$. Here $\nu(x) = \frac x{|x|}$.
\end{lemma}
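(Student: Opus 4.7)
The plan is a short direct computation, made possible by the closed-form expression
\begin{equation*}
\Phi^{jk}_i(x) = \frac{c_n}{|x|^n}\left[\left(\delta_{jk} - \frac{n x_j x_k}{|x|^2}\right)x_i + \delta_{ik}x_j - \delta_{ij}x_k\right]
\end{equation*}
already derived in the proof of the preceding lemma.

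First, I would contract with $\nu_i = x_i/|x|$. The ``antisymmetric tail'' $\delta_{ik}x_j - \delta_{ij}x_k$ contracts with $x_i$ to $x_k x_j - x_j x_k = 0$, so one is left with
\begin{equation*}
(\Phi^{jk}\cdot \nu)(x) = \frac{c_n}{|x|^{n-1}}\left(\delta_{jk} - \frac{n x_j x_k}{|x|^2}\right).
\end{equation*}
This is $|x|^{1-n}$ times the trace-free bracket, already suggestive that the sphere integral vanishes.

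Second, I would integrate over $\{|x|=R\}$. The computation reduces to the single classical identity
\begin{equation*}
\int_{|x|=R} x_j x_k\, d\sigma = \frac{\delta_{jk} R^2}{n}\, |S^{n-1}_R|,
\end{equation*}
which I would justify by symmetry (off-diagonal entries are odd in some coordinate; the diagonal entries are equal by rotational invariance and sum to $\int |x|^2 d\sigma = R^2 |S^{n-1}_R|$). Plugging in produces the exact cancellation
\begin{equation*}
c_{jk} = \frac{c_n}{R^{n-1}}\left[\delta_{jk}|S^{n-1}_R| - \frac{n}{R^2}\cdot \frac{\delta_{jk}R^2}{n}|S^{n-1}_R|\right] = 0.
\end{equation*}

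There is essentially no obstacle here: the calculation is purely algebraic and the needed formula for $\Phi^{jk}_i$ was already established. As a conceptual sanity check one may observe that since $\Phi^{jk} = \partial_k \vec{U}_j$ and $\operatorname{div}\vec{U}_j = 0$ in $\R^n\setminus\{0\}$, the field $\Phi^{jk}$ is divergence-free off the origin, so the divergence theorem forces $c_{jk}$ to be independent of $R$ (also consistent with the $(1-n)$-homogeneity of $\Phi^{jk}$). However, $R$-independence by itself does not force vanishing; the vanishing is precisely the content of the identity above.
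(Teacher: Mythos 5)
Your proof is correct, and it takes a genuinely different route from the paper's. You contract the explicit formula for $\Phi^{jk}_i$ (already established in the proof of the preceding linear-independence lemma) with $\nu_i = x_i/|x|$, observe that the antisymmetric tail $\delta_{ik}x_j - \delta_{ij}x_k$ kills itself under the contraction, and reduce what remains to the classical spherical moment identity $\int_{|x|=R}x_jx_k\,d\sigma = \frac{\delta_{jk}R^2}{n}\,|S^{n-1}_R|$, which produces an exact cancellation. The paper argues more abstractly: using that $\div\Phi^{jk}=0$ away from the origin, $c_{jk}$ equals the flux of $\Phi^{jk}(\cdot-y)$ through $\{|x|=R\}$ for any $|y|<R/2$; smearing against a bump function $\phi$ with $\int\phi=1$, writing $\Phi^{jk}_i(x-y) = -\pd_{y_k}U_{ij}(x-y)$, and integrating by parts in $y$ reduces the claim to the fact that the flux of $U_{ij}(\cdot-y)$ through $\{|x|=R\}$ is independent of $y$, so that its pairing with $\pd_{y_k}\phi$ is zero. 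Your computation is more elementary and entirely self-contained (just a well-known sphere integral), whereas the paper's argument is structural and never touches the explicit formula: it applies to any divergence-free kernel with translation-invariant flux. Your closing remark — that $\div\Phi^{jk}=0$ off the origin together with $(1-n)$-homogeneity forces $R$-independence but not vanishing (e.g.\ $\nb E$ has nonzero flux) — is exactly the reason the paper cannot stop at the divergence-theorem step and needs the additional integration-by-parts argument, so it is a useful observation to retain.
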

\noindent{\bf Proof}. Since $\div  \Phi^{jk}(x)=0$
when $x \not =0$, we have
\[
c_{jk} = \int_{|x|=R}  \Phi^{jk}(x-y) \cdot \nu(x) dS_x ,
\quad \forall |y|<R/2.
\]
Choose $\phi\in C^\I_c(\R^n)$ with support inside $B_{R/2}$
and $\int \phi =1$.
Then
\EQ{
c_{jk}
&= \int \int_{|x|=R}  \Phi^{jk}(x-y) \cdot \nu(x) dS_x \, \phi(y) dy
\\
& = -  \int_{|x|=R} \int  \pd_{y_k} U_{ij}(x-y) \phi(y) dy\,\nu_i(x) dS_x
\\
& =  \int_{|x|=R} \int   U_{ij}(x-y) \pd_{y_k} \phi(y) dy\,\nu_i(x) dS_x
\\
& =  \int  \bke{\int_{|x|=R}   U_{ij}(x-y) \nu_i(x) dS_x }\pd_{y_k} \phi(y) dy\, = 0.
}

We now consider Stokes and Navier-Stokes flows in the whole space and exterior domains.
A \emph{weak solution} of  the Stokes system \eqref{eq1-1} (or of the Navier-Stokes system \eqref{eq1-2})   in $\Om \subset \R^n$ is a vector field $v \in  W^{1,2}_{loc}(\Om)$ that satisfied the weak form of \eqref{eq1-1} (or of \eqref{eq1-2}) with divergence-free test functions, with no assumption on its global integrability nor its boundary value in this section.

\begin{lemma}[Uniqueness in $\R^n$]\label{th:uniquenessRn} Let $v \in W^{1,2}_{loc}({\R^n})$, $n \ge 2$, be a weak solution of the Stokes system  \eqref{eq1-1}   in $ \R^n$  with zero force. If $v(x)=o(|x|)$ as $|x|\to \infty$, then $v $ is constant. 
\end{lemma}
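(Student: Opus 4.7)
\medskip

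\noindent \textbf{Proof plan.} The plan is to reduce the stationary Stokes system to a scalar biharmonic equation and then apply a Liouville-type rigidity for biharmonic functions with sublinear growth on $\R^n$. First I would promote the $W^{1,2}_{loc}$ weak solution to a classical one: taking the divergence of $-\Delta v + \nabla q = 0$ in the distributional sense yields $\Delta q = 0$, so by Weyl's lemma $q$ is harmonic and smooth, and standard interior elliptic regularity for the Stokes system then gives $v \in C^\infty(\R^n)$. Applying $\Delta$ to the momentum equation,
\[
\Delta^2 v \;=\; \Delta \nabla q \;=\; \nabla \Delta q \;=\; 0,
\]
so each component $v_i$ is biharmonic on all of $\R^n$.

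Next I would invoke the classical interior estimate for biharmonic functions: there is a constant $C = C(n)$ such that whenever $\Delta^2 u = 0$ in $B_R(x_0)$,
\[
|\nabla^2 u(x_0)| \;\le\; \frac{C}{R^2} \sup_{B_R(x_0)} |u|.
\]
This follows from Pizzetti's mean value formula, or equivalently from the real-analyticity of polyharmonic functions with Cauchy-type coefficient bounds. Applying the estimate to each $v_i$ and using $v(x) = o(|x|)$, one has $\sup_{B_R(x_0)}|v_i| = o(R)$, and hence $|\nabla^2 v_i(x_0)| \le C R^{-2}\, o(R) \to 0$ as $R \to \infty$ for any fixed $x_0$. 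Thus $\nabla^2 v_i \equiv 0$ on $\R^n$, so each $v_i$ is affine, and the sublinear growth forces the linear part to vanish; therefore $v$ is constant.

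The only non-routine ingredient is the biharmonic interior estimate, which I would cite from a standard reference on polyharmonic functions rather than rederive. No step presents a genuine obstacle: there is no boundary to handle, and because $q$ is automatically harmonic there is no need for a Bogovski correction or pressure gauge fixing. Note that in contrast to Lemma \ref{th:uniqueness} for the half space, one only gets $v$ constant (not $v \equiv 0$), which is optimal since nonzero constants solve the system in $\R^n$ trivially.
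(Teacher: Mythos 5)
Your proposal is correct, but it takes a genuinely different route from the paper's one-line argument. The paper invokes the pressure-independent interior regularity theory of Sver\'ak--Tsai, which for a $W^{1,2}_{loc}$ weak solution of the source-free Stokes system (formulated against divergence-free test functions, so that the pressure never enters) yields the scale-invariant estimate $\norm{\nb v}_{L^\infty(B_R)}\le \frac{C}{R}\norm{v}_{L^\infty(B_{2R})}$; combined with $v=o(|x|)$ this gives $\nb v\equiv 0$ in a single application. Your argument instead recovers the pressure (via De Rham / Bogovski\u{\i} / Ne\v{c}as, a step you should make explicit, since the weak formulation only tests against divergence-free fields), observes $\De q=0$ and hence $\De^2 v=0$, and then applies the classical interior estimate $|\nb^2 v(x_0)|\le CR^{-2}\sup_{B_R(x_0)}|v|$ for biharmonic functions to conclude $\nb^2 v\equiv 0$, and finally that $v$ is affine and hence constant. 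Both routes are sound. What the paper's approach buys is brevity and complete avoidance of the pressure; what yours buys is independence from the specialized Stokes local regularity theory, replacing it with the classical Liouville-type rigidity for polyharmonic functions, at the modest cost of the extra pressure-recovery step and passing through second rather than first derivatives.
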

\begin{proof} By the pressure independent estimates of \cite{Sverak-Tsai} and bootstraping, $v$ is locally $C^1$ and
\[
\norm{\nb v}_{L^\infty(B_R)} \le \frac CR \norm{ v}_{L^\infty(B_{2R})}.
\]
Taking $R \to \infty$, we get $\nb v\equiv 0$. 
\end{proof}

\begin{proposition} [Asymptotics of the Stokes flows
in the whole space] \label{thm:whole1} Let $n \ge 2$. Let $v \in
H^1_{loc}(\R^n)$ be a weak solution of
\[
-\Delta v+\nabla p=f,
\qquad{\rm
div}\,v=0\qquad \mbox{in}\,\,\R^n
\]
with $f$ satisfying, for some $a >n+1$,
\begin{equation}
\label{cancel}
|f(x)|\lec \langle x \rangle^{-a}, \quad
\int_{\R^n}f(x)dx=0.
\end{equation}
Assume that  $v$ satisfies
\[
|v(x)|\lec 
\cmtm{ o(1) \quad \text{as } |x| \to \infty.}
\label{est:td}
\]
Then $|v(x)|\lec  \langle x\rangle^{-n+1}$, and its asymptotics is given as
\begin{equation}
\label{est:whole}
v_i(x)=
\sum_{(j,k)\neq (n,n)} \Phi^{jk}_i(x)b_{jk}
+
O(\de(x) ) \qquad (|x|>1),
\end{equation}
where $\de(x) = \bka{x}^{-\min\{n, a-2\}} \cmtm{(1+ 1_{a=n+2} \log \bka{x})}$,
\begin{equation}
\label{bjk.def}
b_{jk}=-\int_{\R^n}y_k f_j(y)dy\quad\text{for}\quad j\neq k,
\quad
b_{jj}=\int_{\R^n}(y_n f_n(y) - y_j f_j(y))dy.
\end{equation}
\end{proposition}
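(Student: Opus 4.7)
The plan is to represent $v$ as a convolution of the Lorentz tensor $U_{ij}$ with the force $f$ and then extract the leading asymptotic via a first-order Taylor expansion of $U_{ij}$, crucially using the cancellation $\int f=0$. Define
\[
V_i(x):=\int_{\R^n} U_{ij}(x-y)\,f_j(y)\,dy.
\]
For $n\ge 3$ the decay $|U_{ij}(x-y)|\lec |x-y|^{2-n}$ combined with $|f(y)|\lec \bka{y}^{-a}$ shows $V(x)\to 0$ as $|x|\to\I$ by dominated convergence; for $n=2$ one first uses $\int f=0$ to rewrite $V_i(x)=\int[U_{ij}(x-y)-U_{ij}(x)]\,f_j(y)\,dy$ and bounds $|U(x-y)-U(x)|\lec |y|/|x|$ on $\{|y|<|x|/2\}$ to again get $V=o(1)$. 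Then $v-V$ is a weak Stokes solution in $\R^n$ with zero force and with $v-V=o(1)$, so Lemma \ref{th:uniquenessRn} yields $v-V\equiv\mathrm{const}$, and the constant must vanish; hence $v=V$.

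Using $\int f=0$ once more, Taylor's theorem with $\Phi^{jk}_i=\partial_k U_{ij}$ gives on $\{|y|<|x|/2\}$
\[
U_{ij}(x-y)-U_{ij}(x)=-\sum_k y_k\,\Phi^{jk}_i(x)+R(x,y),\qquad |R(x,y)|\lec \frac{|y|^2}{|x|^n},
\]
so the leading contribution is $\sum_{j,k} c_{jk}\,\Phi^{jk}_i(x)$ with $c_{jk}:=-\int y_k f_j(y)\,dy$, finite since $a>n+1$. The near-field remainder is controlled by $|x|^{-n}\int_{|y|<|x|/2}|y|^2\bka{y}^{-a}\,dy$; applying \eqref{eq3.15} with $m=a-2$ produces exactly $\bka{x}^{-\min(n,a-2)}$ together with the $\log$ correction when $a=n+2$. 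On the far region $\{|y|\ge|x|/2\}$, a rescaling $y=|x|z$ shows each of $\int|U(x-y)||f|$, $|U(x)|\int_{|y|>|x|/2}|f|$, and $|\Phi^{jk}(x)|\int_{|y|>|x|/2}|y||f|$ is $O(\bka{x}^{2-a})$, which is absorbed in $\delta(x)$. Combining the estimates gives both $|v(x)|\lec \bka{x}^{1-n}$ and the asymptotic expansion.

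To match the form stated in \eqref{est:whole}, I use the identity $\Phi^{nn}=-\sum_{j<n}\Phi^{jj}$ (the divergence-free consequence noted in the Remark following the basis lemma) to rewrite
\[
\sum_{j,k}c_{jk}\,\Phi^{jk}=\sum_{j\neq k}c_{jk}\,\Phi^{jk}+\sum_{j<n}(c_{jj}-c_{nn})\,\Phi^{jj}.
\]
Reading off the coefficients gives $b_{jk}=c_{jk}=-\int y_k f_j\,dy$ for $j\neq k$ and $b_{jj}=c_{jj}-c_{nn}=\int(y_n f_n-y_j f_j)\,dy$, which is precisely \eqref{bjk.def}.

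The main obstacle is Step~1 in dimension $n=2$: because $U$ grows logarithmically, the representation $v=V$ is not automatic, and the hypothesis $\int f=0$ must be invoked already at this stage to make the convolution meaningful and decaying, and to apply the uniqueness lemma. Once $v=V$ is established, the remaining work is a clean Taylor expansion combined with standard integrability bounds, plus the purely algebraic regrouping in the last step.
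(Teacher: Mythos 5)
Your proposal is correct and follows essentially the same route as the paper: establish $v_i=\int U_{ij}(\cdot-y)f_j\,dy$ via the uniqueness Lemma~\ref{th:uniquenessRn}, use $\int f=0$ to subtract $U_{ij}(x)$ and first-order Taylor-expand, split the error at $|y|=|x|/2$ and control the pieces with \eqref{eq3.15}, and finally regroup the coefficients via $\Phi^{nn}=-\sum_{j<n}\Phi^{jj}$. Your extra remark spelling out why $\int f=0$ already makes the representation $v=V$ valid when $n=2$ (where $U$ grows logarithmically) is a useful clarification but does not change the argument.
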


\begin{proof}
By uniqueness in the class  \eqref{est:td} using Lemma \ref{th:uniquenessRn},
\[
v_i(x)=\int_{\R^n} U_{ij}(x-y)f_j(y)dy.
\]
By \eqref{cancel}, 
\begin{align}
v_i(x)
&= \int_{\R^n} (U_{ij}(x-y)-U_{ij}(x))f_j(y)dy
\notag
\\
&= 
\sum_{j,k=1}^n  \Phi^{jk}_i(x) \hat b_{jk} 
+\sum_{j,k=1}^n%
\int_{\R^n} (U_{ij}(x-y)-U_{ij}(x)+ \Phi^{jk}_i(x)y_k)f_j(y)dy,
\label{eq:expansion}
\end{align}
where
$\hat b_{jk}=-\int_{\R^n}y_k f_j(y)dy$ for $j$, $k=1,2,\cdots, n$.
By Remark \ref{w^{jj}} and the definition of $b_{jk}$, %
\begin{align*}
\sum_{j,k=1}^n  \Phi^{jk}(x) \hat b_{jk}
=
\sum_{(j,k)\neq (n,n)}\Phi^{jk}(x) b_{jk}.
\end{align*}

The second term in \eqref{eq:expansion} is the error. To estimate it, we may assume $|x|>2$. We split it as
$$
\int_{\R^n} (U_{ij}(x-y)-U_{ij}(x)+ \Phi^{jk}_i(x) y_k)f_j(y)dy
=\int_{|y|\le |x|/2}
+
\int_{|y|>|x|/2}
=
I+I\!I.
$$
By the Taylor theorem and the estimate
$
|\pd_{kl}^2 U_{ij}(x)|\lec |x|^{-n},
$
we get for $\theta =\theta(x,y)\in [0,1]$
\[
|I |
=
|\int_{|y|\le |x|/2}\pd^2_{kl}U_{ij}(x-\theta y)y_ky_lf_j(y)dy|
\lec
|x|^{-n} \int_{|y|\le |x|/2}\langle y\rangle^{-a+2}dy.
\]
By \eqref{eq3.15},
\[
|I|\lec |x|^{-\min(n,a-2)} (1+ 1_{a=n+2} \log |x|).
\]
For $I\!I$,
\EQ{
|I\!I | 
\lec \int_{|y|>|x|/2}
\bke{ |x-y|^{2-n} + |x|^{2-n} +   |x|^{1-n}  |y|} |y|^{-a}
dy
 = C |x|^{-a+2}.
}
The last equality is by scaling.
The proof is complete.
\end{proof}

We next consider the Stokes flows in exterior domains.

\begin{proposition} [Asymptotics for the exterior Stokes flows]
\label{thm:exterior1}
Let $n \ge 2$ and $\Omega \subset \R^n$ be an exterior Lipschitz domain  
 with $0 \not \in \wbar \Om$.
Assume $(v,p) \in H^2_{loc}(\wbar \Omega) \times H^1_{loc}(\overline \Omega)$
is a  solution of
\[
-\Delta v+\nabla p=f,
\qquad{\rm
div}\,v=0\qquad \mbox{in }\,\,
\Omega,
\]
satisfying
\begin{equation}
|f(x)|\lec \langle x \rangle^{-a}
\qquad {\it with}\ a >n+1,
\end{equation}
\begin{equation}
\label{cancel2}
\cmtm{ 
\int_{\Omega} f +\int_{\pd \Omega} (\nu \cdot \nb v  -p\nu)=0,}
\end{equation}
and
\begin{equation}
\label{est:up}
|v(x)|\lec  \cmtm{o(1),\quad \text{as } |x|\to \infty.} %
\end{equation}
Then its asymptotics is given as
\begin{equation}
\label{est:exterior1}
v_i(x)=\tilde{b}_0 H_i(x)+\sum_{(j,k)\neq (n,n)}  \Phi^{jk}_i(x)
\tilde{b}_{jk}+
O(\de(x) ),
\end{equation}
where $\de(x) = \bka{x}^{-\min\{n, a-2\}} \cmtm{(1+ 1_{a=n+2} \log \bka{x})}$,
\begin{equation}
\tilde{b}_0=\int_{\pd {\Omega}} v\cdot \nu dS, \qquad
H(x) =\nabla E(x) = \frac{-x}{n \om_n |x|^n},
\end{equation}
and
\begin{equation}
\label{b}
\tilde{b}_{jk}=
\begin{cases}
-\int_{\Omega}y_kf_jdy
\cmtm{+} \int_{\pd \Omega}
(v_j\nu_k-y_k\nabla v_j\cdot \nu+y_kp \nu_j)dS
\hfill \textit{if} \ j\neq k,
\\
 \begin{split}
\int_{\Omega}(y_nf_n - y_jf_j )dy
\cmtm{+} \int_{\pd \Omega} \left \{(v_j\nu_j-y_j\nabla v_j\cdot \nu+y_jp \nu_j) \right \}dS
\\
\cmtm{-}
 \int_{\pd \Omega}
\left \{(v_n\nu_n-y_n\nabla v_n\cdot \nu+y_n p \nu_n)\right \}dS
\end{split}
\quad
 \textit{if} \ j=k.
\end{cases}
\end{equation}
\end{proposition}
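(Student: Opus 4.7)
The approach is to mimic the proof of Proposition \ref{thm:whole1}, replacing the whole-space representation by the Green (reciprocity) identity for the Stokes system in an exterior domain. First I would apply the reciprocity identity to $(v,p)$ and the Lorentz fundamental solution $(U_{\cdot i}(\cdot-x), q_i(\cdot-x))$ on $\Omega \cap B_R$ and let $R \to \infty$; the boundary integral on $\partial B_R$ vanishes by the hypothesis $v = o(1)$, the corresponding normalization $p \to 0$ at infinity, and the decay $|U_{ij}(z)|, |q_i(z)| = O(|z|^{2-n}), O(|z|^{1-n})$. This yields a representation
\[
v_i(x) = \int_\Omega U_{ij}(x-y) f_j(y)\,dy + \mathcal{B}_i(x),
\]
where $\mathcal{B}_i(x)$ is a sum of surface integrals on $\partial\Omega$ with kernels $U_{ij}(x-y)$, $\partial_{y_k} U_{ij}(x-y)$, and $q_i(x-y)$, and densities linear in $v$, $\nabla v$, $p$.

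Next I would Taylor expand each kernel in $y$ around $0$: $U_{ij}(x-y) = U_{ij}(x) - \Phi^{jk}_i(x) y_k + R^1_{ij}(x,y)$ with $|R^1_{ij}| \lec |x|^{-n}|y|^2$ for $|y|\le|x|/2$, and analogously for $q_i(x-y)$ and $\partial_{y_k} U_{ij}(x-y)$ (whose leading term at $y=0$ is $-\Phi^{jk}_i(x)\nu_k$). Substituting, the coefficient of the $|x|^{2-n}$-order piece $U_{ij}(x)$ collects exactly
\[
\int_\Omega f_j(y)\,dy + \int_{\partial\Omega}(\partial_\nu v_j - p\nu_j)(y)\,dS,
\]
which vanishes by hypothesis \eqref{cancel2}, killing the slow-decaying leading term. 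At order $|x|^{1-n}$, the kernel $q_i$ produces the flux contribution $q_i(x)\int_{\partial\Omega} v\cdot\nu\,dS$; since $q_i(x)$ agrees with $H_i(x)$ up to the sign dictated by the reciprocity identity, this yields $\tilde{b}_0 H_i(x)$. The coefficients of $\Phi^{jk}_i(x)$ gather $-\int_\Omega y_k f_j\,dy$ from the interior, together with boundary contributions $\int_{\partial\Omega}(v_j\nu_k - y_k\partial_\nu v_j + y_k p\nu_j)\,dS$ coming from the $U_{ij}$ and $\partial_{y_k}U_{ij}$ kernels; these assemble into constants $\hat{b}_{jk}$ for all pairs $(j,k)$. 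Finally, by Remark \ref{w^{jj}} and $\Phi^{nn} = -\sum_{j<n}\Phi^{jj}$, the $(n,n)$-coefficient is absorbed into the diagonal, so that $\tilde{b}_{jj} = \hat{b}_{jj} - \hat{b}_{nn}$, producing exactly the $j=k$ formula in \eqref{b}, while $\tilde{b}_{jk} = \hat{b}_{jk}$ for $j \neq k$.

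For the error, I would split the interior integral into $\{|y|\le|x|/2\}$ and $\{|y|>|x|/2\}$ as in Proposition \ref{thm:whole1}. The second-order Taylor remainder contributes $|x|^{-n}\int_{|y|\le|x|/2}\bka{y}^{-a+2}\,dy$, bounded by $\bka{x}^{-\min(n,a-2)}(1+1_{a=n+2}\log\bka{x})$ via \eqref{eq3.15}; the outer region contributes $O(|x|^{-a+2})$ by the same scaling argument as in Proposition \ref{thm:whole1}; and the boundary-integral remainder is $O(|x|^{-n})$ since $\partial\Omega$ is compact and the second-order Taylor error of each kernel is $O(|x|^{-n})$ there. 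The main obstacle is the careful bookkeeping of the boundary terms: organizing the several surface integrals generated by the kernels $U_{ij}, \partial_{y_k}U_{ij}, q_i$, tracking the sign conventions of the reciprocity identity, and using the divergence-free condition and integration by parts on $\partial\Omega$ when needed to match exactly the asymmetric form of $\tilde{b}_{jk}$ in \eqref{b}, and in particular to ensure the cancellation of the $|x|^{2-n}$ term uses precisely the vector balance \eqref{cancel2}.
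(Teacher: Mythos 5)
Your route is genuinely different from the paper's. The paper does not work with a representation formula on $\Omega$ at all: it first subtracts $\tilde b_0 H$ to make the flux on $\partial\Omega$ vanish, then multiplies $v$ by a cut-off $\chi$ vanishing near $\partial\Omega$ and adds a compactly supported Bogovskii-type correction $\hat v$ so that $(w,q)=(v\chi+\hat v,\ p\chi)$ solves a Stokes system in all of $\R^n$ whose force $g$ decays like $\langle x\rangle^{-a}$ and integrates to zero; Proposition \ref{thm:whole1} is then invoked off the shelf, and the explicit boundary formula \eqref{b} emerges by showing $b_{jk}$ is cut-off independent and letting $\chi\to 1$ with integrations by parts. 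This construction has the built-in advantage that $q=p\chi$ has compact support, so no normalization or decay of the pressure at infinity is ever needed.

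Your reciprocity approach is plausible and in some ways more direct, but it carries two gaps the paper's cut-off avoids. First, to discard the $\partial B_R$ contribution you estimate $U\sim R^{2-n}$ against area $R^{n-1}$, which forces you to pair $U_{ij}(x-y)$ with $(\partial_\nu v)_j-p\nu_j$ on $\partial B_R$ of size $o(1/R)$. The hypothesis gives only $v=o(1)$; you must therefore first prove $|\nabla v|+|p-p_\infty|=o(1/R)$ near $\partial B_R$ via interior Stokes estimates (as the paper does elsewhere using \cite{Sverak-Tsai}) plus a telescoping argument for the pressure average, and you only gesture at this with ``the normalization $p\to 0$.'' Second, the reciprocity identity produces boundary densities with several kernels ($U_{ij}$, $\partial_{y_k}U_{ij}$, $q_j$) acting on $v$, $\nabla v$, $p$; matching these to the asymmetric form of $\tilde b_{jk}$ in \eqref{b} and verifying that the $|x|^{2-n}$ coefficient is exactly the left side of \eqref{cancel2} requires carrying out the sign bookkeeping and the integrations by parts on $\partial\Omega$ that you name as ``the main obstacle'' but do not execute. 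Until those computations are done, the assertion that your boundary terms assemble into \eqref{b} remains a claim rather than a proof.
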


\begin{remark} (i) From the proof of Lemma \ref{lem:flux},
we see that $H$ is linearly independent of
the vectors $ \Phi^{jk}$ for $1\le j$, $k \le n$. 

(ii) If we restrict a solution $(v,p)$ of Proposition \ref{thm:whole1} to $\Om$, since 
\[
\int_{\Om^C} f=
\int_{\Om^C} - \De v_i + \pd_i p = \int_{\pd \Om} \pd_j v_i \nu_j - p \nu_i,
\]
we get condition \eqref{cancel2} from $\int_{\R^n} f=0$.
\end{remark}

\begin{proof} 
First note that, by replacing $v $ by $\td v= v- \tilde{b}_0 H$,
we may assume $\int_{\pd \Omega} v\cdot \nu=0$. \cmtm{Note that \eqref{cancel2} 
and \eqref{b} are not changed by this replacement because, for \eqref{cancel2},
\[
\int_{\pd \Om} \nu \cdot \nb H_i = \int_{ \Om\cap B_R} \div \nb H_i - \int_{\pd B_R} \frac xR \cdot \nb H_i = O(1/R),
\]
which vanishes as $R\to \infty$;
For \eqref{b} and $R > \text{diam}\,(\Om)$,
\EQ{
\int_{\pd \Om} &\bket{H_j \nu_k - y_k \pd_l H_j \nu_l} = \int_{\pd \Om} \bket{2H_j \nu_k-\pd_l( y_k H_j) \nu_l }
\\
&= 
\int_{\Om \cap B_R} \bket{2\pd_k H_j  - \De ( y_k H_j) } - \int_{\pd B_R} \bket{H_j \nu_k - y_k \pd_l H_j \nu_l} =0+ \de_{kj}.
}}
 
Let $\Om_1$ be any exterior domain with $\wbar \Om_1 \subset \Om$, and
$\chi$ be any smooth function with $\supp \chi \subset \Omega$ and
$\chi= 1$ in $\Om_1$. 
We define $(w, q)$ by
\begin{equation}
\label{eq:v}
w=v\chi +\hat{v}, \qquad q=p\chi,
\end{equation}
where $\hat{v}$ is a solution of
$\div\, \hat{v}=-v\cdot \nabla \chi$ in $\R^n$.
Thanks to the condition $\int_{\pd \Omega}v\cdot \nu=0$,
we can choose $\hat{v}$
satisfying
$\supp \hat{v} \subset \wbar{\Omega} \setminus \Omega_1$,
$\|\nabla \hat{v}\|_{L^s} \lec_s \|v \cdot \nabla \chi\|_{L^s}$
by \cite[Theorem III.3.1]{Galdi}.  Then $(w,q)$ satisfies
\begin{align*}
-\Delta w +\nabla q=g, \qquad \div w=0 \qquad \textrm{in} \ \R^n,
\\
g=f\chi-\pd_l(v \pd_l\chi)-\pd_l v \pd_l\chi
-\Delta\hat{v} + p\nabla \chi.
\end{align*}

From the assumption for $f$, we easily see that $|g(x)| \lec \langle x \rangle^{-a}$ and
$\int_{\R^n} gdx=0$ because
\begin{align*}
\int_{\R^n}gdx 
&=\int_{\Omega}\{f\chi-\pd_l(v \pd_l\chi)-\pd_l v \pd_l(\chi-1)
-\Delta\hat{v} + p\nabla (\chi-1)\}  
\\
&=\int_{\Omega}f\chi 
- \int_{\pd \Omega} v \pd_l\chi\nu_l 
+\int_{\Omega} \De v (\chi-1)+\int_{\pd \Omega} \pd_l v \nu_l 
-\int_{\Omega} \nabla p (\chi-1)  -\int_{\pd \Omega} p\nu 
\\
&=\int_{\Omega} f +\int_{\pd \Omega} (\nu \cdot \nb v  -p\nu),
\end{align*}
which is zero by assumption \eqref{cancel2}.
Then Proposition \ref{thm:whole1} shows
 $w_i=  \Phi^{jk}_i b_{jk}+R_i$ with
$|R_i(x)| \le C \de(x)$
and
\begin{equation}
b_{jk}=
\begin{cases}
-\int_{\R^n} y_k g_j(y)dy \qquad\textrm{for}\ j\neq k,
\\
-\int_{\R^n} (y_k g_j(y)-y_n g_n(y))dy
\qquad \textrm{for}\ j=k.
\end{cases}
\label{eq:b_{jk}}
\end{equation}
We claim that $b_{jk}$ is independent of the choice of
the cut-off \eqref{eq:v}.
Indeed, let $\chi'$, $(w',q')$ be another cut-off solution of \eqref{eq:v}, 
and $b'_{jk}$ be as in \eqref{eq:b_{jk}},
 then for $j \neq k$,
\begin{align*}
b_{jk}-b'_{jk}&=
-\int_{\R^n}
y_k \{
-\Delta (v_j(\chi-\chi')+\hat{v}_j-\hat{v}_j')
+\partial_j (p(\chi-\chi'))\}
\\
&=\int_{\R^n}
\partial_j y_k (p(\chi-\chi'))
=0.
\end{align*}
Similary, 
\begin{align*}
b_{jj}-b'_{jj}
&=
\int_{\R^n}
\left\{\partial_j y_j (p(\chi-\chi'))
-\partial_n y_n (p(\chi-\chi'))\right\}
=0.
\end{align*}
Thus the claim follows. 

Now consider a sequence of cut-off functions $\chi_m$, $m=1,2,\cdots$,
such that $0 \le \chi_m(x) \le \chi_{m+1}(x) \le 1$, 
($m=1,2,\cdots$) and $\chi_m(x) \to 1$ as $m \rightarrow \infty$ for all $x \in \Om$, 
and choose $(w^{(m)},q^{(m)})$, $b^{(m)}_{jk}$ as in
\eqref{eq:v}, \eqref{eq:b_{jk}}.
Since $b^{(m)}_{jk}$ is  independent of $m$, it suffices to
prove $\lim_{m\rightarrow \infty}b^{(m)}_{jk}=\tilde{b}_{jk}$.
We only consider the case $j\neq k$, since
the case $j=k$ is shown in the same way.
We divide
\begin{align*}
b^{(m)}_{jk}
&=
-\int_{\R^n}
y_k\bket{ f_j\chi-\pd_l(v_j \pd_l\chi)-\pd_l v_j \pd_l\chi
- \Delta\hat{v}_j
+p\pd_j \chi} dy
\\
&=I+I\!I+I\!I\!I+I\!V+V.
\end{align*}
Then it easily follows that
$I \rightarrow -\int_{\Omega}y_kf_jdy$
as $m\rightarrow \infty$ and that $I\!V=0$.
By integration by parts, we also observe
\begin{align*}
I\!I
&=
-\int_{\R^n}
\pd_l(y_k)v_j \pd_l\chi dy
=
-\int_{\Omega}
v_j \pd_k(\chi-1)dy
\\
&=
\int_{\Omega}\pd_k v_j(\chi-1)
\cmtm{+}
\int_{\pd \Omega} v_j\nu_k
\rightarrow \int_{\pd \Omega} v_j \nu_k,
\\
I\!I\!I
&=
- \int_{\Omega}
\pd_l(y_k \pd_j v_l) (\chi-1)dy
-
\int_{\pd \Omega} y_k \pd_l v_j  \nu_l
\rightarrow -
\int_{\pd \Omega} y_k \pd_l v_j  \nu_l,
\\
V &=
 \int_{\Omega}
\pd_j(y_k p) (\chi-1)dy
+\int_{\pd \Omega} y_kp\nu_j
\rightarrow \int_{\pd \Omega} y_kp\nu_j,
\end{align*}
as $m \rightarrow \infty$, using $\chi \to 1$ in $\Omega$.
Thus we have proved \eqref{b}.
\end{proof}

\begin{theorem} [Asymptotics of fast decaying Navier-Stokes flows in $\R^n$]
\label{thm:whole2}
Let $n\ge 3$, and $u\in H^1_{loc}(\R^n)$ be a weak solution of
\[
\label{eq:NSE}
-\Delta u+(u\cdot\nabla )u+\nabla p=f,
\qquad{\rm
div}\,u=0\qquad \mbox{in }\,\,\R^n.
\]
There exists $\ve_0 >0$ such that
if for some $\e\in (0,\ve_0]$,
\[
|f(x)|\le \ve \langle x \rangle^{-a}\quad \text{with }
a>n+1,\quad
\int_{\R^n}f(x)dx=0,
\]
 and
\begin{equation}
\label{est:u}
|u(x)|\le \ve \langle x\rangle^{1-n},
\end{equation}
then 
 its asymptotics is given as
\begin{equation}
\label{est:NSE}
u_i(x)=\sum_{(k,j)\neq(n,n)}  \Phi^{jk}_i(x)
a_{jk}+
O(\e \de(x)),
\end{equation}
where $\de(x) = \bka{x}^{-\min\{a-2,\,n\}}(1+1_{a=n+2}\log \bka{x})$,
$$
a_{jk}=
\begin{cases}
b_{jk} \cmtm{-} \int_{\R^n}u_ju_kdy \qquad \textrm{for} \ j\neq k,
\\
b_{jj} \cmtm{-} \int_{\R^n}(u_j^2-u_n^2)dy \qquad \textrm{for}\ j=k.
\end{cases}
$$
Here $b_{ij}$ are the constants given by \eqref{bjk.def}.
\end{theorem}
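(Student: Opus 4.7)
The plan is to recast the Navier--Stokes system as a Stokes system with modified force and then apply Proposition \ref{thm:whole1}. Since $\div u=0$, the nonlinearity is in divergence form, $(u\cdot\nabla)u_i = \partial_k(u_k u_i)$, so $u$ solves the Stokes system with effective force $\tilde f = f - \nabla\cdot(u\otimes u)$. The hypothesis $|u|\le \e\bka{x}^{1-n}$ with $n\ge 3$ gives $|u\otimes u|\lec \e^2\bka{y}^{2-2n}$, which is integrable; moreover the flux $\int_{|x|=R} u\otimes u\cdot \nu$ decays like $R^{1-n}$, so $\int \nabla\cdot(u\otimes u) = 0$ and the cancellation hypothesis $\int \tilde f = 0$ of Proposition \ref{thm:whole1} persists. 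I would compare $u$ with the Lorentz convolution
\[
v_i(x) = \int_{\R^n} U_{ij}(x-y) f_j(y)\, dy - \int_{\R^n} \partial_k U_{ij}(x-y) (u_k u_j)(y)\, dy,
\]
where the second term arises from an integration by parts using the decay of $u\otimes u$. Since both $u$ and $v$ tend to $0$ at infinity and $u-v$ is a Stokes solution with zero force and $o(|x|)$ growth, Lemma \ref{th:uniquenessRn} forces $u-v \equiv 0$.

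For the linear integral, Proposition \ref{thm:whole1} directly produces the expansion $\sum_{(j,k)\ne(n,n)} \Phi^{jk}_i(x) b_{jk} + O(\delta(x))$ with $b_{jk}$ as in \eqref{bjk.def}. For the nonlinear integral I recognize $\partial_k U_{ij}=\Phi^{jk}_i$ and pull out the constant term:
\[
-\int \Phi^{jk}_i(x-y)(u_k u_j)(y)\,dy = -\Phi^{jk}_i(x)\int u_k u_j\,dy + \int \bigl[\Phi^{jk}_i(x)-\Phi^{jk}_i(x-y)\bigr](u_k u_j)(y)\,dy.
\]
Using the identity $\Phi^{nn}=-\sum_{j<n}\Phi^{jj}$ from Remark \ref{w^{jj}}, the $(n,n)$-contribution in the main nonlinear term is redistributed into the diagonal indices $j=k<n$. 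Combining with the linear leading part yields the coefficient $b_{jk}-\int u_j u_k$ for $j\ne k$ and $b_{jj}-\int(u_j^2-u_n^2)$ for $j<n$, which is exactly $a_{jk}$ as asserted.

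The remaining task is the nonlinear remainder estimate. I would split into the near region $\{|y|\le |x|/2\}$ and the far region $\{|y|\ge|x|/2\}$. On the near region, Taylor's theorem with $|\nabla\Phi^{jk}(z)|\lec |z|^{-n}$ gives $|\Phi^{jk}(x-y)-\Phi^{jk}(x)|\lec |y|\bka{x}^{-n}$; combined with $|u_k u_j|\lec \e^2\bka{y}^{2-2n}$ and the integral bound \eqref{eq3.15}, this produces a remainder of order $\e^2\bka{x}^{-n}$ for $n\ge 4$ and $\e^2\bka{x}^{-n}\log\bka{x}$ for $n=3$. On the far region, the bounds $|\Phi^{jk}(x-y)|\lec |x-y|^{1-n}$, $|\Phi^{jk}(x)|\lec |x|^{1-n}$ together with the pointwise control of $u\otimes u$ give $O(\e^2 \bka{x}^{3-2n})\le O(\e^2\bka{x}^{-n})$ since $n\ge 3$. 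The main obstacle I expect is fitting everything into $O(\e\,\delta(x))$: one factor of $\e$ is absorbed by the $\e$ in $\e\,\delta$ via $\e\le 1$, and the borderline $n=3$ logarithm is matched either by the $1_{a=n+2}\log\bka{x}$ factor inside $\delta$ or absorbed into the smallness of $\e$ when $a>n+2$ through a bootstrap that slightly improves the decay of $u$.
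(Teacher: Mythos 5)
Your proof is correct and arrives at the same expansion, but it takes a route that is genuinely different from the paper's. The paper first uses the scaling argument together with the pressure-independent interior estimates of Sverak--Tsai to derive $|\nabla u(x)|\lec \e\bka{x}^{-n}$, and then treats the \emph{non-divergence} form $\tilde f = f - u\cdot\nabla u$ as a pointwise-decaying force with $|\tilde f|\lec\e\bka{x}^{-\min\{a,2n-1\}}$, so that Proposition~\ref{thm:whole1} applies in one shot; the coefficient identity $\int y_k(u\cdot\nabla u_j)\,dy = -\int u_ju_k\,dy$ then converts the output of Proposition~\ref{thm:whole1} into the stated $a_{jk}$. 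You instead keep the nonlinearity in \emph{divergence} form $\nabla\cdot(u\otimes u)$, integrate by parts at the level of the Lorentz representation to move the derivative onto $U_{ij}$, and then redo the Taylor expansion explicitly for the nonlinear piece (now only a \emph{first}-order expansion of $\Phi^{jk}$ is needed, since only the constant is peeled off). The benefit of your approach is that it avoids the gradient estimate entirely, and with it the appeal to the Sverak--Tsai local regularity machinery. The cost is that you cannot cite Proposition~\ref{thm:whole1} wholesale for the nonlinear term and must reproduce the near/far splitting and the borderline log estimates by hand, as you do. Both routes lead to the same coefficients $a_{jk}$ and both meet the same delicacy at $n=3$: the effective decay exponent of the nonlinearity is $2n-1 = n+2$ when $n=3$, so a logarithmic factor appears in the nonlinear remainder regardless of $a$; your closing remark that this is absorbed by a bootstrap is not accurate (the decay rate of $u$ is already at the target $\bka{x}^{1-n}$ and cannot be improved to kill the log), but the paper's own proof has the identical borderline issue, so this is a feature of the theorem rather than a defect of your argument. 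One small imprecision: you say you ``apply Proposition~\ref{thm:whole1} directly'' to the convolution $\int U_{ij}(x-y)f_j(y)\,dy$; strictly speaking the proposition is stated for solutions, not for the integral itself, so you are either implicitly invoking it for the Stokes solution defined by that convolution, or replicating its proof. Neither reading causes a problem, but it is worth stating which you mean.
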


\emph{Remark.} We can replace \eqref{est:u} by a weaker condition $|u(x)| \le \e \bka{x}^{-1-\si}$, $\si>0$: Under this weaker condition, we can improve the decay iteratively, $|u(x)| \lec \e \bka{x}^{-1-(1+k /2)\si}$, $k \in \N$, as in the proof of Theorem \ref{application-KMT-100}, Case (ii).

\begin{proof}
By the scaling and the bootstrapping argument
as in \cite{Sverak-Tsai}, we see
$|\nabla u(x)|\lec \ve \langle x\rangle^{-n}$.
Indeed, if $R=\frac 12 |x_0|>2$, let $v(y) = R^{n-1} u(x)$, $\pi(y) = R^n p(x)$ and $g(y) = R^{n+1} f(x)$ with $x=x_0+Ry$. Then $v$ satisfies
\[
- \De v + R^{2-n} v \cdot \nb v + \nb \pi = g
\]
with $|v| \lec \ve$ and $g \lec \ve$ in $B_1$. By  bootstrapping (using the pressure-independent Stokes estimate of  \cite{Sverak-Tsai}), one gets $|\nabla v| \lec \ve$ in $B_{1/2}$, which implies $|\nabla u(x_0)|
\lec \ve R^{-n}$.

Note that $u$ is the solution of the Stokes equations
with force $\tilde{f}=f-u\cdot \nabla u$ satisfying
$
|\tilde{f}(x)| \lec \langle x \rangle^{-\min\{a, 2n-1 \}}
$.
Since $\min\{a,\,2n-1 \} >n+1$ and $2n-1>n+2$ using $n \ge 3$,
it follows from Proposition \ref{thm:whole1} that
$$
u_i(x)=\sum_{(j,k)\neq(n,n)} \Phi^{jk}_i(x)
a_{jk}+
O(\de(x)),
$$
where 
$$
a_{jk}=
\begin{cases}
-\int_{\R^n} y_k(f_j-u\cdot \nabla u_j)(y) dy \qquad
\textrm{for} \ j \neq k,
\\
-\int_{\R^n} (y_jf_j-y_nf_n)
-(y_j u\cdot \nabla u_j-y_n u\cdot \nabla u_n) dy
\qquad
\textrm{for} \ j=k.
\end{cases}
$$
Then noting that $\int_{\R^n} y_k(u\cdot \nabla u_j)(y) dy
=-\int_{\R^n}u_ju_k(y)dy $, we obtain the desired result.
\end{proof}

\begin{theorem} [Asymptotics of  fast decaying  exterior Navier-Stokes flows]
\label{thm:exterior2} Let $n \ge 3$ and $\Omega \subset \R^n$ be
an exterior Lipschitz  domain with $0 \not \in \wbar \Om$, and let $(u,p)\in H^2_{loc}(\wbar \Omega) \times
H^1_{loc}(\wbar \Omega)$ be a solution of
\[
-\Delta u+(u\cdot\nabla )u+\nabla p=f,
\qquad{\rm
div}\,u=0\qquad \mbox{in }\,\,\Omega.
\]
There exists $\ve_0 >0$ such that
if for some $\e\in (0,\ve_0]$,
$|f(x)|\lec \ve\langle x \rangle^{-a}$ with
$a>n+1$,
\[
|u(x)|\lec \ve \langle x\rangle^{-n+1},
\]
\begin{equation}
\label{cancel3}
\int_{\Omega}  f +\int_{\pd \Omega} (\nu \cdot \nb u  -p\nu - (u \cdot \nu)u)=0.
\end{equation}
Then its asymptotics is given as
\begin{equation}
\label{est:exterior2}
u_i(x)=\tilde{b}_0H_i(x)
+\sum_{(j,k)\neq(n,n)} \Phi^{jk}_i(x)
\tilde{a}_{jk}+
O(\e \de(x)),
\end{equation}
where $\de(x) = \bka{x}^{-\min\{a-2,\,n\}}(1+1_{a=n+2}\log \bka{x})$,
$$\tilde{a}_{jk}=
\begin{cases}\tilde{b}_{jk}\cmtm{-} \int_{{\Omega}}u_ju_kdy
+ \cmtm{ \int_{\pd {\Omega}} y_ku_ju\cdot \nu dS}
\qquad
\textrm{for} \ j\neq k,
\\
\tilde{b}_{jj}  \cmtm{-} \int_{{\Omega}}(u_j^2-u_n^2)dy
+ \cmtm{ \int_{\pd {\Omega}} (y_ju_j- y_n u_n) u\cdot \nu dS}
\qquad \textrm{for} \ j=k.
\end{cases}
$$
Above  $\tilde{b}_0$, $\tilde{b}_{jk}$ and $H(x)$ are defined in
Proposition \ref{thm:exterior1}.
\end{theorem}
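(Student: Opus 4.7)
The plan is to reduce Theorem \ref{thm:exterior2} to Proposition \ref{thm:exterior1} by viewing $u$ as a solution of the Stokes system in $\Omega$ with modified force
\[
\tilde f = f - (u \cdot \nabla) u = f - \nabla \cdot (u \otimes u).
\]
First I would upgrade the pointwise decay of $\nabla u$ by a scaling/bootstrap argument as in \cite{Sverak-Tsai} (exactly as done in the proof of Theorem \ref{thm:whole2}): setting $v(y) = R^{n-1}u(x_0 + Ry)$ with $R = |x_0|/2$, one obtains $|\nabla u(x)| \lesssim \varepsilon \langle x \rangle^{-n}$. Consequently $|(u \cdot \nabla) u (x)| \lesssim \varepsilon^2 \langle x \rangle^{1-2n}$, so $|\tilde f(x)| \lesssim \varepsilon \langle x \rangle^{-\min\{a,\,2n-1\}}$. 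Since $n \ge 3$ we have $2n - 1 \ge n+2 > n+1$, so the decay hypothesis for $\tilde f$ required by Proposition \ref{thm:exterior1} holds, and the error exponent $\min\{a-2, n\}$ (with the logarithmic correction precisely when $a = n+2$) is unchanged because $\min\{2n-1-2, n\} = n$ for $n \ge 3$.

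Next I would verify the global cancellation \eqref{cancel2} for $\tilde f$, i.e.
\[
\int_\Omega \tilde f + \int_{\partial \Omega}(\nu \cdot \nabla u - p \nu) = 0.
\]
Using $\div u = 0$ we write $(u \cdot \nabla) u_i = \partial_j(u_i u_j)$ and apply the divergence theorem on $\Omega \cap B_R$; the outer boundary integral on $\partial B_R$ is $O(\varepsilon^2 R^{2-n}) \to 0$ as $R \to \infty$, giving $\int_\Omega (u \cdot \nabla) u = \int_{\partial \Omega} (u \cdot \nu)\,u\, dS$ (with $\nu$ the outward unit normal of $\Omega$). Substituting this into the Stokes cancellation condition for $\tilde f$ produces exactly the hypothesis \eqref{cancel3}. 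Proposition \ref{thm:exterior1} then yields the expansion
\[
u_i(x) = \tilde b_0 H_i(x) + \sum_{(j,k) \neq (n,n)} \Phi^{jk}_i(x)\, \tilde a_{jk} + O(\varepsilon \delta(x)),
\]
with $\tilde b_0$ as stated and $\tilde a_{jk}$ given by formula \eqref{b} with $f$ replaced by $\tilde f$.

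Finally, I would convert $\tilde a_{jk}$ into the form asserted in the theorem. For $j \neq k$, the only new piece is $\int_\Omega y_k (u\cdot \nabla) u_j\, dy = \int_\Omega y_k \partial_l(u_j u_l)\,dy$; integration by parts on $\Omega \cap B_R$ together with the vanishing outer-boundary flux (here one needs $|y_k u_j u_l| \cdot |\partial B_R| = O(\varepsilon^2 R^{2-n})$, again requiring $n \ge 3$) gives
\[
\int_\Omega y_k (u\cdot \nabla) u_j\, dy = -\int_\Omega u_j u_k \, dy + \int_{\partial \Omega} y_k u_j (u \cdot \nu)\, dS.
\]
Combining with $\tilde b_{jk}$ produces $\tilde a_{jk} = \tilde b_{jk} - \int_\Omega u_j u_k\,dy + \int_{\partial\Omega} y_k u_j (u \cdot \nu)\,dS$. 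The $j = k$ case is identical by subtracting the analogous identity for $(n,n)$.

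The routine parts are the scaling/bootstrap and the convergence/vanishing of the boundary-at-infinity terms. The only delicate point is getting every sign convention correct in the two boundary-flux identities (once for the cancellation condition and once for the conversion of the nonlinear bulk term), since both integrations by parts involve the inner boundary of the exterior domain with outward normal pointing \emph{into} the hole, and both involve simultaneously verifying that the far-field pieces vanish under the $|u|\lesssim \varepsilon \langle x\rangle^{1-n}$ hypothesis; this is exactly where $n\ge 3$ is used (the borderline $n=2$ case would produce non-decaying outer-boundary contributions).
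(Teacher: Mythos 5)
Your proposal is correct and follows essentially the same route as the paper: reduce to Proposition \ref{thm:exterior1} with modified force $\tilde f = f - (u\cdot\nabla)u$, obtain $|\nabla u| \lesssim \varepsilon \langle x\rangle^{-n}$ by the scaling/bootstrap argument of \cite{Sverak-Tsai}, verify the cancellation \eqref{cancel2} for $\tilde f$ from hypothesis \eqref{cancel3} by the divergence theorem (with vanishing far-field flux), and convert $\tilde a_{jk}$ by a further integration by parts. The one cosmetic point worth fixing: the far-field term for $\int_{\Omega} (u\cdot\nabla)u$ is actually $O(\varepsilon^2 R^{1-n})$, not $O(\varepsilon^2 R^{2-n})$, so it already vanishes for $n\ge 2$; it is only the $y_k$-weighted integral that genuinely requires $n \ge 3$.
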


\begin{remark} If we restrict a solution $(u,p)$ of Theorem \ref{thm:whole2} to $\Om$, since 
\[
\int_{\Om^C} f=
\int_{\Om^C} - \De u_i + \pd_j( u_j u_i ) + \pd_i p = \int_{\pd \Om} (\pd_j v_i - u_ju_i) \nu_j - p \nu_i,
\]
we get condition \eqref{cancel3} from $\int_{\R^n} f=0$.
\end{remark}

\begin{proof}
As in the proof of Theorem \ref{thm:whole2}, $(u, p)$ satisfies
the linear Stokes system with
the force $\tilde{f}=f-(u\cdot \nabla u)$ in $\Omega$
and $|\nabla u(x)|\lec \e |x|^{-n}$.
Here $|\tilde{f} (x)| \le \e \langle x\rangle^{-\min \{ a, 2n-1\}}$
with $\min \{ a, 2n-1\}>n+1$.
\eqref{cancel3} and the integration by parts yield
\begin{equation}
\int_{\Omega} \tilde f +\int_{\pd \Omega} (\nu \cdot \nb u  -p\nu)=0.
\end{equation}
Then Proposition \ref{thm:exterior1} shows
$$
u(x)=\tilde{b}_0 H(x) + \sum_{k=1}^{n}\Phi^{jk}(x)
\tilde{a}_{jk} +
O(\e \de(x)).
$$
Here we have for $j\neq k$ that
\EQ{
\tilde{a}_{jk}&
=
-\int_{{\Omega}}y_k( f_j(y)- u\cdot \nabla u_j(y))dy
+ \int_{\pd {\Omega}}
(u_j\nu_k- y_k\nabla u_j\cdot \nu+ y_kp \nu_j)dS
\\
&=-\int_{{\Omega}}(y_k f_j(y)+u_ju_k(y))dy
+ \int_{\pd {\Omega}}
(u_j\nu_k - y_k\nabla u_j\cdot \nu + y_kp \nu_j +y_ku_ju\cdot \nu)dS.
}
The case $j=k$ is handled in the same way.
Hence the proof is complete.
\end{proof}
\section{Appendix: Axisymmetric self-similar solutions in $\R^3_+$}
 \label{sec5}

In this appendix
we consider the nonexistence of minus one homogeneous solutions of the steady-state
Navier-Stokes equations in the half-space $\R^3_+$ with the \emph{Navier} boundary
conditions (BC),
\begin{equation}\label{axially-KMT-10}
-\Delta u+(u\cdot\nabla )u+\nabla p=0,\qquad \mbox{in }\,\R^3_+,
\end{equation}
\begin{equation}\label{axially-KMT-20}
u \cdot \nu = 0, \quad \bke{(1-\gamma)\frac{\pd u}{ \pd \nu }+ \ga
u} \times \nu=0,\qquad \mbox{on }\,\partial \R^3_+\setminus\{0\},
\end{equation}
for some given  $\gamma\in[0,1]$, and $\nu$ is the unit outernormal, $\nu=(0,0,-1)$ for $\R^3_+$.
Note that the Navier BC becomes the zero Dirichlet (no-slip) BC if $\ga=1$, which is what we used in Section \ref{sec3}. When $\ga=0$, it agrees with the {\em slip BC} for a half space, see e.g.~\cite{Xiao-Xin}. Their nonexistence excludes an obstacle for proving the asymptotic results in Section \ref{sec3} which have faster decay, even under the more general Navier BC. 
Recall that in the whole space we have the family of Slezkin-Landau solutions.

\begin{theorem}
Let $u$ be a minus one homogeneous solution of the Navier-Stokes
equations \eqref{axially-KMT-10} in $\R^3_+$ with the Navier BC \eqref{axially-KMT-20} for some $\gamma\in[0,1]$. If $u$ is
axially symmetric with respect to $x_3$-axis, then $u$ vanishes.
\end{theorem}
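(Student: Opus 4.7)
The plan is to exploit the axisymmetry and $-1$-homogeneity to reduce the problem to an ODE system on $x=\cos\theta \in [0,1]$. In spherical coordinates $(\rho,\theta,\varphi)$ adapted to the $x_3$-axis, I write
\begin{equation*}
u = \rho^{-1}\bke{U(x)\hat{\rho} + V(x)\hat{\theta} + W(x)\hat{\varphi}},\qquad p = \rho^{-2} P(x),
\end{equation*}
and the incompressibility condition collapses to $U = \frac{d}{dx}(\sqrt{1-x^2}\,V)$. Introducing the Stokes stream-function profile $f$ via $V=f/\sqrt{1-x^2}$ (so $U=f'$) and the circulation $\Gamma=\sqrt{1-x^2}\,W$, the steady Navier-Stokes system \eqref{axially-KMT-10} reduces to a coupled ODE system on $(0,1)$: a fourth-order nonlinear ODE for $f$ (the classical Landau-Squire equation, with $\Gamma^2$ as a forcing term) together with a second-order linear ODE for $\Gamma$ whose coefficients involve $f$.

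Next I would translate the boundary and regularity conditions. Smoothness of $u$ at the axis $x=1$ forces $f(1)=f'(1)=0$ and $\Gamma(1)=0$. On the boundary $x=0$, the no-penetration condition $u\cdot\nu=0$ gives $V(0)=0$, i.e.\ $f(0)=0$. For the tangential Navier condition the key observation is this: on $\{z=0\}$ a $-1$-homogeneous velocity satisfies $u_\tau \sim r^{-1}$ while $\partial_z u_\tau \sim r^{-2}$, so requiring $(1-\gamma)\partial_z u_\tau = \gamma u_\tau$ for every $r>0$ forces $\gamma U(0)=0$ and $(1-\gamma)U'(0)=0$ separately, and analogously for $W$. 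Consequently for $\gamma\in(0,1)$ we obtain the overdetermined set $f(0)=f'(0)=f''(0)=0$ and $\Gamma(0)=\Gamma'(0)=0$, which together with the axis conditions forces $f\equiv \Gamma\equiv 0$ (five conditions cannot be accommodated by the four-parameter family of solutions of the Landau-Squire ODE, and similarly for $\Gamma$). The substance of the proof therefore lies in the endpoints: for $\gamma=1$ we have only $f'(0)=\Gamma(0)=0$ beyond $f(0)=0$, and for $\gamma=0$ we have only $f''(0)=\Gamma'(0)=0$ beyond $f(0)=0$.

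For each endpoint, I would first eliminate the swirl: the $\Gamma$-equation is linear and homogeneous, so testing against $\Gamma$ and integrating on $[0,1]$ produces an energy identity whose boundary terms are killed by $\Gamma(1)=0$ together with either $\Gamma(0)=0$ (if $\gamma=1$) or $\Gamma'(0)=0$ (if $\gamma=0$), and whose bulk terms have a definite sign, forcing $\Gamma\equiv 0$. With the swirl removed, the poloidal part reduces to the swirl-free Landau-Squire ODE, whose solutions regular at $x=1$ form a two-parameter family obtained by relaxing the axis-regularity of the Slezkin-Landau solutions at $x=-1$ (which is outside our domain). The two remaining endpoint conditions at $x=0$ then select only the trivial solution, giving $f\equiv 0$ and hence $u\equiv 0$. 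The main obstacle is this last step: showing that the map from the two-parameter family of axis-regular solutions to the pair of boundary values at $x=0$ has only the trivial preimage of zero. This can be carried out by exploiting the classical integrability of the Landau-Squire equation (which admits explicit first integrals after one quadrature) combined with a direct computation, or alternatively by a global energy identity on $B_R^+\setminus B_\varepsilon^+$ where the Navier-BC structure produces a sign-definite flat-boundary contribution and the spherical-boundary contributions scale as $1/R$ and $1/\varepsilon$; matching the leading $1/\varepsilon$ behavior as $R\to\infty$, $\varepsilon\to 0$ then forces the angular profiles to vanish identically.
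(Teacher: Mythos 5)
Your observation about the scaling of the Navier boundary condition is correct and actually sharper than what appears in the paper's reduction \eqref{axially-KMT-23}: for a $(-1)$-homogeneous field, $u_\tau \sim r^{-1}$ while $\partial_\nu u_\tau \sim r^{-2}$, so imposing $(1-\gamma)\partial_\nu u_\tau + \gamma u_\tau = 0$ at every $r>0$ does force both pieces to vanish separately when $0<\gamma<1$. The paper instead works with the single linear combination $(1-\gamma) f'(\pi/2) + \gamma f(\pi/2)=0$, which is weaker but still suffices. Your stronger observation is worth recording, though it does not change the conclusion.

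However, the two places where the real work lies are where your proposal stops short.

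First, the swirl elimination. You propose testing the $\Gamma$-equation against $\Gamma$ and claim the bulk terms are sign-definite. In this reduction the swirl equation has the form $\Gamma_{xx} = -\tfrac{B}{1-x^2}\Gamma_x$ (an advection term with coefficient $B = g\sin\varphi$ of no fixed sign), so the energy identity produces a contribution
\begin{equation*}
\frac12\int_0^1 \partial_x\!\left(\frac{B}{1-x^2}\right)\Gamma^2\,dx
\end{equation*}
that is not sign-definite without information on the unknown poloidal profile. What the paper actually uses is the \emph{first-order} structure of the equation for the derivative: with $H = h' + h\cot\varphi = (h\sin\varphi)'/\sin\varphi$, equation \eqref{axially-KMT-310} reads $H' = gH$, so $H$ cannot change sign and vanishes identically once it has a zero. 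The zero is then produced either directly from the boundary condition (at $\gamma=0$) or from the mean-value identity $\int_0^{\pi/2} H\sin\varphi\,d\varphi = h(\pi/2)$, which together with the Navier BC at $\varphi=\pi/2$ yields a sign contradiction unless $H\equiv 0$ (for $0<\gamma\le 1$). Your plan does not supply an argument replacing this unique-continuation mechanism.

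Second, the poloidal part is left essentially as a plan (``classical integrability \ldots\ combined with a direct computation, or alternatively by a global energy identity''), but this is precisely the nontrivial step. The paper carries it out explicitly: after $h=0$, integrating \eqref{axially-KMT-40} and substituting into \eqref{axially-KMT-30} gives \eqref{axially-KMT-70}, and the substitutions $B = g\sin\varphi$, $L(t)=B$, $L=(1-t^2)v$ reduce the whole system to the Riccati equation
\begin{equation*}
v' + \frac{v^2}{2} = \frac{C_1}{(1+t)^2},\qquad v(0)=0,
\end{equation*}
after boundedness eliminates a second integration constant $C_3$. The boundary relation \eqref{axially-KMT-200}--\eqref{axially-KMT-300} then pins down $(2-\gamma)C_1=0$, hence $C_1=0$, $v\equiv 0$, and $f=g=0$. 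The global energy identity on $B_R^+\setminus B_\varepsilon^+$ that you sketch is unlikely to close: a $(-1)$-homogeneous flow has scale-invariant energy flux through spherical shells, so the $1/\varepsilon$ and $1/R$ contributions do not separate cleanly, and the flat-boundary term is not sign-definite under the Navier BC for general $\gamma$. Finally, the parameter-counting argument (``five conditions cannot be accommodated by a four-parameter family'') is a heuristic only; for a nonlinear ODE system it does not by itself establish that the trivial solution is the unique one meeting the boundary data.
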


{\it Remark.}\quad Note $u$ is allowed to have nonzero $u_\th$-component. If we do not impose any BC, then the restrictions of the Slezkin-Landau solutions are non-trivial solutions.

The following proof is adapted from the corresponding argument of Tsai and Sverak for the whole space \cite[Section 4.3]{Tsai-thesis}. 

\begin{proof}
We use the spherical coordinate $(\rho, \theta, \varphi)$, where
$\rho=\abs{x}$, $\theta$ is azimuthal angle, and $\varphi$ is
between the angle $x$ and $x_3$-axis. The axially symmetric solution
is of the form
\begin{equation}\label{axially-KMT-21}
u= \frac 1\rho f(\ph) e_\rho +  \frac 1\rho g(\ph)
e_\ph+\frac{1}{\rho} h(\ph)e_{\theta}.
\end{equation}
We note that the boundary conditions \eqref{axially-KMT-20} becomes
\begin{equation}\label{axially-KMT-23}
g(\frac \pi 2)=0, \qquad (1-\ga) f' (\frac \pi 2)+\ga f (\frac \pi
2)=0, \qquad (1-\ga) h' (\frac \pi 2)+\ga h (\frac \pi 2)=0.
\end{equation}
We also observe, due to symmetry, that
\begin{equation}\label{axially-KMT-27}
f'(0)=g(0)=h(0)=0.
\end{equation}
The equations can be rewritten in spherical coordinates as follows:
\begin{equation}\label{axially-KMT-30}
f^{''}+f'\cot\varphi=gf'-(f^2+g^2)-2p,
\end{equation}
\begin{equation}\label{axially-KMT-40}
f'=gg'+p',
\end{equation}
\begin{equation}\label{axially-KMT-310}
(h'+h\cot\varphi)'=g(h'+h\cot\varphi).
\end{equation}
\begin{equation}\label{axially-KMT-50}
f+g'+g\cot\varphi=0.
\end{equation}
Setting $H(\varphi):=h'+h\cot\varphi=(h\sin\varphi)'/\sin\varphi$,
we see that \eqref{axially-KMT-310} is rewritten as $H'=gH$. We
claim that $H=0$, which obviously implies $h=0$, due to boundary
conditions \eqref{axially-KMT-300}. We treat the cases of
$\gamma=0$, $0<\gamma<1$ and $\gamma=1$, separately. We note first
that if $H$ has a zero at a point in $[0,\pi/2]$, it vanishes
everywhere due to uniqueness of ODE. In case that $\gamma=0$, it is
direct via \eqref{axially-KMT-23} that $H(\pi/2)=0$, and thus $H=0$.
In case that $0<\gamma<1$, we note that
\begin{equation}\label{navier-boundary-10}
H(\frac{\pi}{2})=h'(\frac{\pi}{2})=-\frac{\gamma}{1-\gamma}h(\frac{\pi}{2}).
\end{equation}
On the other hand, we also observe that
\begin{equation}\label{navier-boundary-20}
\int_0^{\frac{\pi}{2}} H(\varphi)\sin\varphi
d\varphi=h(\frac{\pi}{2}).
\end{equation}
Suppose that $H$ has no zero, which means that $H$ is either
positive or negative on $[0, \pi/2]$. If $H$ is positive, then
$h(\pi/2)<0$ via \eqref{navier-boundary-10}. Then, it is contrary to
\eqref{navier-boundary-20}. The other case that $H$ is negative also
lead to a contradiction. Thus, $H=0$. Finally, for the case
$\gamma=0$, we have due to \eqref{axially-KMT-300}
\[
\int_0^{\frac{\pi}{2}} H(\varphi)\sin\varphi
d\varphi=h(\frac{\pi}{2})=0.
\]
This implies that $H$ has a zero. Therefore, we conclude that $H$
vanishes, and so $h=0$.

Integrating \eqref{axially-KMT-40}, we have
\begin{equation}\label{axially-KMT-60}
f=\frac{g^2}{2}+p+C_1
\end{equation}
for some constant $C_1$. Combining \eqref{axially-KMT-30} and
\eqref{axially-KMT-60}, we obtain
\begin{equation}\label{axially-KMT-70}
f^{''}+f'\cot\varphi=gf'-f^2-2f+2C_1.
\end{equation}
We set $A:=f(\varphi)\sin\varphi$ and $B:=g(\varphi)\sin\varphi$.
Noting that $-A=B'$, we see that \eqref{axially-KMT-70} becomes
\begin{equation}\label{axially-KMT-80}
(f^{'}\sin\varphi)'=(Bf)'+2B'+2C_1\sin\varphi.
\end{equation}
Therefore, we obtain
\begin{equation}\label{axially-KMT-90}
f^{'}\sin\varphi=Bf+2B-2C_1\cos\varphi+C_2.
\end{equation}
Via \eqref{axially-KMT-27}, we see that $C_2=2C_1$ and thus,
\begin{equation}\label{axially-KMT-100}
f^{'}\sin\varphi=Bf+2B-2C_1(\cos\varphi-1).
\end{equation}
Let $L(t)=B(\varphi)$ with $t=\cos\varphi$. Noting that
$B'=-L'\sin\varphi$ and $B^{''}=L^{''}\sin^2\varphi-L'\cos\varphi$,
we observe that $L$ satisfies
\[
(1-t^2)L^{''}+2L+LL'=2C_1(t-1),\qquad L(0)=L(1)=0.
\]
Using the change of variable $L(t):=(1-t^2)v(t)$, we see that $v$
solves
\[
\bke{(1-t^2)^2v^{'}}'+\bke{\frac{(1-t^2)^2v^{2}}{2}}'-2C_1(t-1)=0,\qquad
v(0)=0,
\]
which can be simplified as follows:
\[
v'+\frac{v^2}{2}=\frac{C_1}{(1+t)^2}+\frac{C_3}{(1-t^2)^2},\qquad
v(0)=0.
\]
Since $v$ is bounded over $t\in [0,1]$, we see that $C_3=0$, and
therefore, $v$ satisfies
\[
v'+\frac{v^2}{2}=\frac{C_1}{(1+t)^2},\qquad v(0)=0.
\]
In addition, we note that
\[
v'(0)=- g'(\frac \pi 2) = C_1, \qquad  v^{''}(0)=g^{''}(\frac \pi
2)= -2C_1,
\]
Recalling that $f = -g' - g \cot \ph$ and by taking one more
derivative, we also see that $f' = -g^{''} - g'\cot \ph
+\frac{g}{\sin^2\varphi}$. Therefore, we obtain
\begin{equation}\label{axially-KMT-200}
 f(\frac \pi 2)= - g'(\frac \pi 2),
\qquad f'(\frac \pi 2)= - g''(\frac \pi 2).
\end{equation}
Combining \eqref{axially-KMT-23} and \eqref{axially-KMT-200}, we see
that
\begin{equation}\label{axially-KMT-300}
0=(1-\ga) f' (\frac \pi 2)+\ga f (\frac \pi 2)=2(1-\ga) C_1 +\ga
C_1=(2-\ga)C_1.
\end{equation}
Since $\gamma\in [0,1]$, we conclude that $C_1=0$, which implies
$v=0$. Then, it is straightforward that $f=g=0$, which implies that
$u$ vanishes. This completes the proof.
\end{proof}

\section*{Acknowledgments}
We thank the  Center for Advanced Study in Theoretical Sciences at the National Taiwan University, where part of this project was
conducted when all of us visited in March 2014 and in July 2015.
The research of Kang was partially
supported by NRF-2014R1A2A1A11051161 and NRF-20151009350.
The research of Miura was partially
supported by JSPS grant 25707005.
The research of
Tsai was partially supported by NSERC grant 261356-13.

\bibliographystyle{abbrv}
\bibliography{kmt-bib}
\end{document}